\theoremstyle{definition}
\theoremstyle{remark}
 \newtheorem{thm}{Theorem}[section]
 \newtheorem{lem}[thm]{Lemma}
 \theoremstyle{definition}
 \newtheorem{rem}[thm]{Remark}
 \numberwithin{equation}{section}
 \newcommand{\e}{\varepsilon}
 \newcommand{\R}{\mathbf{R}}
 \numberwithin{equation}{section}
\def\leq{\leqslant}
\def\geq{\geqslant}
\begin{document}
\title[Long-Time Existence for 2-Dimensional Quasilinear Wave Equations]{Long-Time Existence of Quasilinear Wave Equations Exterior to Star-shaped Obstacle in 
$2\mathbf{D}$}

\author{Ning-An Lai}
\address{School of Mathematical Sciences, Zhejiang Normal University, Jinhua 321004, China}
\email{ninganlai@zjnu.edu.cn(N.-A. Lai)}

\author{Cui Ren}
\address{School of Mathematical Sciences, Zhejiang Normal University, Jinhua 321004, China}
\email{rencui@zjnu.edu.cn(C. Ren)}

\author{Wei Xu}
\address{Corresponding author at: School of Mathematics and Information Sciences, Nanchang Hangkong University,
Nanchang, China}
\email{70709@nchu.edu.cn(W. Xu)}

\subjclass[2020]{Primary 35L05, 35L10, 35L70}

\keywords{Lifespan, quasilinear wave equations, Morawetz energy estimate, star-shaped, good derivative}

\begin{abstract}

In this paper, we study the long-time existence result for small
data solutions of quasilinear  wave equations exterior to star-shaped
regions in two space dimensions. The key novelty is that we establish a
Morawetz type energy estimate for the perturbed inhomogeneous wave equation in the exterior domain, which yields $t^{-\frac12}$ decay inside the cone. In 
addition, two new weighted $L^2$ product estimates are established to produce $t^{-\frac12}$ decay close to the cone.
We then show that the existence lifespan $T_\e$ for the quasilinear wave equations with general quadratic nonlinearity satisfies
\begin{equation*}
\varepsilon^2T_{\varepsilon}\ln^3T_{\varepsilon}=A,
\end{equation*}
for some fixed positive constant $A$, which is almost sharp (with some logarithmic loss) comparing to the known result of the corresponding Cauchy problem.

\end{abstract}

\maketitle

\section{Introduction}

In the following, the point in $[0, T]\times \R^2\backslash \mathcal {K}$ will be denoted by $X=(x_0,x_1,x_2)=(t,x)$.
Partial derivatives will be written as $\partial_l=\partial/\partial
x_l,l=1,2$, with the abbreviations
$\partial=(\partial_0,\partial_1,\partial_2)=(\partial_t,\nabla)$.
The angular momentum operator is the vector field
$\Omega=x_1\partial_2-x_2\partial_1$. Denote $r=|x|$ and
$\Gamma=\{\partial,\Omega\}$.
\par In the present paper we consider the initial boundary value problem of the quasilinear wave equation
\begin{equation}\label{Y1}
\left \{
\begin{array}{lllll}
\Box u=Q(\partial u,\partial^2u),\quad (t,x)\in\mathbb
R_{+}\times \mathbb
R^2\backslash\mathcal {K}, \\
u(t,x)|_{\partial\mathcal {K}}=0,\\
u(t_0,x)=\e f(x),\quad \partial_tu(t_0,x)=\e g(x),\quad x\in\mathbb
R^2\backslash\mathcal {K},
\end{array} \right.
\end{equation}
where
\[
\Box=\partial_t^2-\sum_{i=1}^{2}\partial_{x_i}^2
\]
denotes the standard d'Alembertian operator, $\mathcal {K}$ is a bounded  and star-shaped domain in $\R^2$, and $\e$ is a parameter denoting the smallness of 
the initial data. We assume the nonlinear term $Q(\partial u,\partial^2u)$ in system
\eqref{Y1} is quadratic and is linear in $\partial^2u$. We can
expand
\begin{align*}
Q(\partial u,\partial^2u)=B(\partial
u)+B_{\gamma}^{\alpha\beta}\partial_{\gamma}u\partial_{\alpha}\partial_{\beta}u,
\end{align*}
where $B(\partial u)$ is a quadratic form and
$B_{\gamma}^{\alpha\beta}$ are real constants with
$B_{\gamma}^{\alpha\beta}=B_{\gamma}^{\beta\alpha}$. Throughout this
paper, we use the convention that repeated indices
$\alpha,\beta,\gamma$ are summed from $0$ to $2$ and repeated
indices $i,l$ are summed from $1$ to $2$.
\par

There are satisfactory and fruitful long time behavior results for the small date Cauchy problem of scaler nonlinear wave equations in $\R^n$, see the 
classical works \cite{Joh76, Kla80, Sha82, Kla83, Joh84, Kla85, Kla86, Chr86} and \cite{Hor85,Kov87, LiY89,Katayama,Li3,Li2,Hoshiga1,Alinhac3, Alinhac4, 
Alinhac1, Alinhac2, Hoshiga2,Hoshiga3} and references therein for higher dimensional cases $(n\ge 3)$ and for $n=2$ respectively. One can find more detailed 
introduction in the remarkable monograph \cite{LiZ17}.

The first major developments from the results for Cauchy problem mentioned above to the corresponding exterior problem are due to Keel-Smith-Sogge \cite{Keel2, 
Keel3, Keel1}, in which they established almost global existence for semilinear wave equations, global existence for quasilinear wave equations under null 
condition and almost global existence for systems of quasilinear wave equations with multiple speeds in exterior domain in $\R^3$, where they assumed the 
exterior of the obstacle $\mathcal {K}$ is nontrapping or $\mathcal {K}$ is star-shaped. In these works, they introduced some kind of novel weighted spacetime 
$L^2$ estimate (referred to as KSS estimate) for inhomogeneous wave equations, which reveals great potential in studying the long time behavior for nonlinear 
wave equations, by combining the $\mathcal {O}(|x|^{-1})$ decay of solutions of wave equations. Noting that the more standard $\mathcal {O}(t^{-1})$ is much 
more difficult to obtain for the obstacle problems. The other main innovation of \cite{Keel2, Keel3, Keel1} was the adaptation of Klainerman's vector fields 
method to the exterior problems. They found that the spatial translations, spatial rotations and the scaling vector field are still well-suited for the 
exterior domain problems. Metcalfe-Sogge \cite{Metcalfe4} generalized the results in \cite{Keel3}, where they could drop the star-shaped hypothesis and handle 
non-trapping obstacles. What is more, they could improve  considerably the decay assumptions on the initial data at infinity and handle non-diagonal systems 
involving multiple wave speeds. The results in \cite{Metcalfe4} were further generalized by \cite{Metcalfe7,MNS05, Metcalfe3} to a larger class of quasilinear 
wave equations, such as the one with weaker null condition \cite{Metcalfe7}, the one with nonlinearity depending on the solution itself $u$ at the cubic level 
\cite{MNS05}.

A key ingredient of the proof in \cite{Keel3, Keel1} is the local energy decay estimate (see \cite{LMP63,Mor75}). Metcalfe-Sogge \cite{Metcalfe5} developed a 
new technique based only on energy method, relied only upon the invariance of the wave operator under translations and spatial rotations, and gave a simple 
proof for the result in \cite{Keel2}. The global existence
result for the quasilinear wave equations in the exterior of a star-shaped obstacle in $\R^n (n\ge 4)$ was also established. The main innovation is that they 
established a weighted $L^2_tL^2_x$-estimate for the perturbed wave equation analogous to the one in \cite{Keel2} for the free wave equation, which is really 
powerful for the study of quasilinear wave equations. This kind of weighted $L^2_tL^2_x$-estimate for the
perturbed wave equation (also referred to as variable-coefficient KSS estimate) was generalized to the quasilinear system in the multiple speed setting by 
Matcalfe-Sogge \cite{Metcalfe3}, and hence they proved global existence of solutions to quasilinear wave
equations satisfying the null condition in certain exterior domains in $\R^3$, by using only energy methods. We believe the ideas in \cite{Metcalfe3} can be 
applied to other related problems such as isotropic elasticity in exterior domains, see \cite{Metcalfe1, Metcalfe2, XuL25}. We also refer to some related 
global existence results in higher dimensions $(n\ge 4)$ in \cite{ShT86,Ha95,Metcalfe6} and references therein.

Another kind of initial boundary value problem for the nonlinear wave equations in exterior domains also attracts much attention, i.e., with nonlinearity 
including the solution itself. It is much more involved since we have to bound like the $L^2$ normal of the solution, which is not available from the energy 
estimate. We will not list the details here but refer the reader to \cite{Lin90, Du08, DuZ08, HeM14, HeM141, ZhZ15} and references therein.

For the initial boundary value problem of quasilinear wave equations exterior to a obstacle in $\R^2$, assuming the nonlinearity is at lest cubic, 
Katayama-Kubo-Lucente \cite{Kubo2}  proved the almost global existence for Neumann boundary condition and Kubo \cite{Kubo} established the analogous result for 
Dirichlet boundary condition, respectively. Their proofs involve some complicated weighted pointwise estimates. Very recently, for some special quadratic 
nonlinear terms $\sum_{\alpha=0}^2C^{\alpha}Q_0(u,\partial_{\alpha}u)$ with $Q_0(f,g)=\partial_tf\partial_tg-\partial_1f\partial_1g-\partial_2f\partial_2g$, 
Hou-Yin-Yuan \cite{Hou} established the global existence result for null Dirichlet boundary condition.
 For this kind of nonlinear terms, they could transform such a class of quadratically quasilinear wave equation into certain manageable one with cubic 
 nonlinear forms, by introducing two good unknowns.

\par It seems much less is known for the initial boundary value problem of quasilinear wave equation with general quadratic nonlinear term in exterior domain 
to an obstacle in $\R^2$, comparing to the corresponding exterior domain problem in higher dimensions $(n\ge 3)$ mentioned above.
Two main obstacles appear if we are trying to use the similar methods for the higher dimensional cases $(n\ge 3)$: the \textbf{first} one is that the 
variable-coefficient KSS estimate in \cite{Metcalfe5} does not hold in dimension $2$ again; the \textbf{second} one is that the local energy decay in dimension 
$2$ is much slower than that of the higher dimensional cases. Hence new techniques should be developed. In this paper, we are devoted to studying the long time 
behavior of exterior problem of quasilinear wave equations with general quadratic nonlinearity in $2$ dimensions, under the assumption of star-shaped obstacle 
and null Dirichlet boundary condition. The existence lifespan estimate is established which is "almost" sharp (with some logarithmic loss) compared to the one 
in \cite{Hor85,LiY89} for the corresponding Cauchy problem. Our proof bases on the
vector fields method developed in \cite{Klainerman1}. In particular, as in Keel-
Smith-Sogge \cite{Keel2}, we restrict to the class of admissible
vector fields. Notably the hyperbolic rotations which do not seem appropriate for
problems in exterior domains will be absent in this set,
since they have unbounded normal component
on the boundary. For the scaling vector field, the coefficients can
be large in an arbitrarily small neighborhood of the obstacle and hence we avoid to use it too. The \textbf{first} new technique introduced in this paper is to 
establish the
scaling Morawetz energy estimate
\[\|\langle t-r\rangle^{1/2}\partial
u(t,\cdot)\|_{L^2(\mathbb R^2\backslash\mathcal {K})}
\]
for the
perturbed inhomogeneous wave equations, by using the multiplier
\begin{equation*}
S+\frac12:= t\partial_t+r\partial_r+\frac12,
\end{equation*}
see section 3 below. This estimate is favorable to derive $\mathbf{t^{-\frac12}}$ decay inside the cone $\left(|x|\le \frac t2\right)$. We should remark that 
the star-shaped obstacle assumption is key for the scaling Morawetz energy estimate.
Due to the
decomposition
\[
2(t\partial_t+r\partial_r)=(t+r)(\partial_t+\partial_r)
+(t-r)(\partial_t-\partial_r),
\]
in order to control the terms caused by the scaling Morawetz multiplier
$tu_t+ru_r+u/2$, the \textbf{second} key step is that we introduce an improved scaling Morawetz
energy estimate on $\mathbb R^2$, which implies the better behavior
of good derivative $\partial_t+\partial_r$. \textbf{What is more}, by the standard weighted
Sobolev inequality and introducing two new weighted $L^2$ product estimate, we only need to use the spatial rotation once to
obtain the spatial decay $\mathcal {O}(|x|^{-1/2})$, which is key to obtain $\mathbf{t^{-\frac12}}$ decay close to the cone $(|x|\ge \frac t2)$.
And hence throughout
the proof, the spatial rotation appears at most once. We sketch our key idea and steps for the proof in the following flow chart.
\begin{center}
\begin{tikzpicture}[    box/.style = {draw, rectangle, text width = 3.5cm, text height = 0.5cm, align = center},
    box1/.style = {draw, rectangle, text width = 2cm, text height = 0.5cm, align = center},
    box2/.style = {draw, rectangle, text width = 3cm, text height = 0.5cm, align = center},
    arrow/.style = {->, >=Stealth, thick},
    brace/.style = {decorate, decoration={brace, amplitude=5pt}, thick}
]

\node[box] (decay) at (-6,0) {Morawetz estimate $ \| \langle t-r\rangle^{\frac{1}{2}} \partial \partial^a u \|_{L^2}$};
\node[box] (middle) at (2.5,0) {$ t^{-\frac{1}{2}}$ decay inside the cone: $|x|\leq \frac{t}{2} $};

\node[box] (new) at (-6,-2.8) {$ \int\int |\Box_{h} \partial^a u| | (s\partial_s+r\partial_r)  \partial^a u| dxds $ (some kind of growth)};
\node[box] (growth) at (-6,-5.8) {Improved Morawetz estimate (better behavior of good derivative $ \partial_t + \partial_r $)};

\draw[->] (decay.east) -- (middle.west) node[midway, above] {\textcolor{black}{yields}};
\draw[->] (decay.south) -- (new.north) node[midway, right] {\textcolor{black}{but produces}};
\draw[->] (new.south) -- (growth.north) node[midway, right] {\textcolor{black}{overcomed by}};

\node[box] (decay2) at (-1.8,-3) {Weighed $ L^{\infty} $ estimate $ |r^{\frac{1}{2}} u| $ };

\node[box] (growth2) at (-1.8,-5) {Weighed $ L^{2} $ product estimates: $ \|r^{\frac{1}{2}} vw\|_{L^2} $, $ \|r^{\frac{1}{2}} vw_r\|_{L^2} $ };

\node[fit=(decay2) (growth2), inner sep=0pt, draw=none] (fitnode2) {};

\draw[brace] (0.2,-3) -- (0.2,-5) node[midway, right=15pt]{};
\node[box] (newnode) at (2.5,-4) {$t^{-\frac{1}{2}}$ decay close to the cone: $ |x|\geq \frac{t}{2} $};

\draw[->] ([xshift=-8pt]newnode.west) -- (newnode);

\node[box] (decay3) at (2.5,-6.5) {High order energy estimate $ \| \partial \partial^a \Omega^{\leq1} u  \|_{L^2} $};
\node[box] (middle3) at (2.5,-8.6) {High order energy estimate $ \| \partial \partial^a u  \|_{L^2} $};
\node[box] (growth3) at (-6,-8.6) {High order energy estimate $ \| \partial \partial^a_t u  \|_{L^2} $};

\draw[<-] (decay3.south) -- (middle3.north) node[midway, above] {\textcolor{black}{}};
\draw[<-] (middle3.west) -- (growth3.east)
    node[midway, above, align=left] {\ \ \ $\partial_t $ preserve\\boundary condition}
    node[midway, below, align=left] {  + elliptic  estimate};
\node[fit=(decay) (growth2), inner sep=0pt, draw=none] (fitnode) {};

\draw[brace] (4.6,0) -- (4.6,-6.5) node[midway, right=25pt] {};
\node[box1] (newnode11) at (6.5,-3.25) {lifespan};

\draw[->] ([xshift=-15pt]newnode11.west) -- (newnode11);
\end{tikzpicture}
\end{center}

\par
To solve system \eqref{Y1}, the initial data should satisfy
the relevant compatibility conditions: let
$J_ku=\{\nabla^au:0\leq|a|\leq k\}$. We know for a fixed $m$ and
formal $H^m$ solution of system \eqref{Y1}, we can write
$\partial_t^ku(0,\cdot)=\psi_k(J_kf,J_{k-1}g)$, $0\leq k\leq m$ for
compatibility functions $\psi_k$ depending on $J_kf$ and $J_{k-1}g$.
For $(f,g)\in H^m\times H^{m-1}$, we say that the compatibility
condition is satisfied to $m$ order, provided that $\psi_k$ vanishes
on $\partial\mathcal {K}$ for $0\leq k\leq m-1$.
\par Our main result is stated as
\begin{thm}\label{thm1}
Let $\mathcal{K}$ be a bounded and star-shaped domain in
$\mathbb R^2$, the nonlinearity $Q(\partial u,\partial^2u)$
be as above and $k\geq9$. Suppose the initial data $(f,g)\in
C_c^{\infty}(\mathbb R^2\backslash\mathcal {K})$ satisfy the
compatibility condition to infinity order. Then there exist positive
constants $A$ and $\varepsilon_0$ such that for all
$\varepsilon<\varepsilon_0$ and initial data satisfying
\begin{align}\label{Y24}
\|\nabla f\|_{H^{k+1}(\mathbb R^2\backslash\mathcal
{K})}+\|g\|_{H^{k+1}(\mathbb R^2\backslash\mathcal {K})}\leq
1,
\end{align}
system \eqref{Y1} has a unique solution $u\in
C^{\infty}([t_0,T_{\varepsilon}]\times\mathbb R^2\backslash\mathcal
{K})$, where $T_{\varepsilon}$ is the lower bound of the lifespan
and satisfies 
\begin{equation}\label{lf}
\varepsilon^2T_{\varepsilon}\ln^3T_{\varepsilon}=A,
\end{equation}
where $A>0$ is a generic constant independent of $\varepsilon$.
\end{thm}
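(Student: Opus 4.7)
The proof will follow a continuity/bootstrap argument built on the toolkit advertised in the introduction: scaling Morawetz and improved Morawetz estimates (from Section~3), two new weighted $L^2$ product estimates, and a weighted Sobolev inequality, all organized so that the angular momentum $\Omega$ is applied at most once. The plan is to fix $k\geq 9$, pick exponents $m < k$ with sufficient room, and make bootstrap hypotheses of the form
\begin{equation*}
\sum_{|a|\le k}\|\partial \Gamma^a u(t,\cdot)\|_{L^2}+\sum_{|a|\le k-m}\|\langle t-r\rangle^{1/2}\partial \partial^a u(t,\cdot)\|_{L^2}\le C_1\varepsilon\ln^{3/2}(2+t),
\end{equation*}
together with the spacetime analogues coming from the Morawetz identity, under the side constraint that each term involves $\Omega$ at most once. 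Given \eqref{Y24} and the compatibility assumptions, local existence reduces the task to improving the constant $C_1$ so long as $\varepsilon^2 T\ln^3 T$ is bounded by the chosen $A$.

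The first block of the argument converts the bootstrap into pointwise decay of the good quantities. Inside the cone $r\le t/2$, the scaling Morawetz estimate applied to $\partial^a u$ gives $\langle t-r\rangle^{1/2}\sim t^{1/2}$, which after Klainerman--Sobolev (with at most one use of $\Omega$) yields the $t^{-1/2}$ decay for $|\partial \partial^a u|$ advertised in the flow chart. Close to the cone, I would first derive the $L^\infty$ bound $|r^{1/2}u|\lesssim\dots$ via the 2D weighted Sobolev inequality, and then use the two new weighted $L^2$ product estimates $\|r^{1/2}vw\|_{L^2}$ and $\|r^{1/2}vw_r\|_{L^2}$ to redistribute weights when an $r^{1/2}$ factor is not available on the correct factor; together these replace the missing $(t-r)^{1/2}$ by an $r^{1/2}$, again yielding $t^{-1/2}$ decay for $r\ge t/2$. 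The combined decay reads $|\partial\Gamma^{\le k/2}u|\lesssim \varepsilon\langle t\rangle^{-1/2}\ln^{3/2}(2+t)$ up to admissible losses.

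The second block is the nonlinear closure. Substitute $\Box u=B(\partial u)+B_\gamma^{\alpha\beta}\partial_\gamma u\,\partial_\alpha\partial_\beta u$ into the Morawetz identity with multiplier $S+\tfrac12$ applied to $\partial^a u$. The semilinear term is straightforward; the danger is the term $\iint |\Box_h \partial^a u|\,|(s\partial_s+r\partial_r)\partial^a u|\,dx\,ds$ flagged in the chart, where $\Box_h$ is the perturbed operator and the commutator with $S+\tfrac12$ forces one derivative onto the highest-order unknown. Here the improved Morawetz estimate is essential: it quantifies the extra smallness of the good derivative $\partial_t+\partial_r$, and the decomposition $2S=(t+r)(\partial_t+\partial_r)+(t-r)(\partial_t-\partial_r)$ lets me put the large coefficient $(t+r)$ against the small quantity $\partial_t+\partial_r$. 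Applying Gronwall after splitting $\R^2\setminus\mathcal{K}$ into $r\le t/2$ and $r\ge t/2$ and inserting the $t^{-1/2}\ln^{3/2}t$ pointwise bound produces a Gronwall factor of the shape $\exp(C\varepsilon^2\int_0^T t^{-1}\ln^{3}t\,dt)=\exp(C\varepsilon^2\ln^{4}T)$ in the worst-behaving terms, but only $\exp(C\varepsilon^2\ln^{3}T)$ once the good-derivative gain is used against one copy of $\partial u$. This is precisely the $\ln^3 T$ loss appearing in \eqref{lf}, and picking $A$ small enough forces the bootstrap constant to improve from $C_1$ to $C_1/2$.

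The third block handles the boundary conditions and the counting of $\Omega$'s. Because only $\partial_t$ preserves the homogeneous Dirichlet condition on $\partial\mathcal{K}$, I would first bound $\|\partial \partial_t^a u\|_{L^2}$ by standard energy estimates applied to $\partial_t^a u$. Spatial derivatives are then recovered by an elliptic estimate: rewriting the equation as $\Delta u = -\partial_t^2 u + Q$ on fixed time slices and inducting on the number of spatial derivatives, one arrives at $\|\partial\partial^a u\|_{L^2}$, and finally $\|\partial\partial^a\Omega^{\le 1}u\|_{L^2}$ by commuting in at most one spatial rotation (which is tangential on $\partial\mathcal{K}$ because $\mathcal{K}$ is bounded and star-shaped, so this is harmless after cutting near the obstacle and using local energy decay). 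The main obstacle I anticipate is bookkeeping the logarithmic losses so that they really collapse to $\ln^3 T$ rather than a larger power: this requires using the good-derivative gain from the improved Morawetz estimate exactly once per quadratic interaction and using the weighted $L^2$ product estimates rather than Hölder plus $L^\infty$ in the region near the cone. Once these are balanced, combining the three blocks and choosing $A$ sufficiently small closes the continuity argument and establishes \eqref{lf}.
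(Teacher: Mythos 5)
Your overall architecture matches the paper's: a continuity argument on a master energy functional, the scaling Morawetz estimate for decay inside the cone, the weighted Sobolev inequality plus the two weighted $L^2$ product estimates for decay near the cone, the improved Morawetz estimate paired with the decomposition $2S=(t+r)(\partial_t+\partial_r)+(t-r)(\partial_t-\partial_r)$ to handle the multiplier term, and the chain $\partial_t$-energies $\to$ elliptic regularity $\to$ full spatial derivatives $\to$ at most one $\Omega$. However, there is a genuine gap in the quantitative heart of the argument: you misidentify the mechanism that produces the lifespan \eqref{lf}. You attribute the $\ln^3T_\varepsilon$ loss to a Gronwall factor of the form $\exp(C\varepsilon^2\ln^3T)$, but such a factor stays bounded as long as $\varepsilon^2\ln^3T\lesssim1$, i.e.\ it would yield an \emph{exponentially} long lifespan $T\sim\exp(c\varepsilon^{-2/3})$, which is inconsistent with the algebraic lifespan $T_\varepsilon\sim A\varepsilon^{-2}\ln^{-3}T_\varepsilon$ in the statement (and with the known sharp $\varepsilon^{-2}$ lifespan for general quadratic nonlinearities in $2$D). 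The paper does not use Gronwall at all: it bootstraps $M(t)\leq4C_0\varepsilon^2\ln T_\varepsilon$ (so the energy grows like $\varepsilon(\ln T_\varepsilon)^{1/2}$, not like your ansatz $\varepsilon\ln^{3/2}(2+t)$), and the lifespan is forced by quadratic-in-$M$ error terms that grow \emph{linearly in} $T$, the worst being $C\varepsilon^4T_\varepsilon\ln^4T_\varepsilon$ from the weighted spacetime $L^2$ norm of $\Box_h\partial^au$ near the cone; demanding that this be dominated by $C_0\varepsilon^2\ln T_\varepsilon$ is exactly the condition $\varepsilon^2T_\varepsilon\ln^3T_\varepsilon\leq A$. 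Without identifying this linear-in-$T$ growth as the limiting term, the closure step of your argument does not produce \eqref{lf}.

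A secondary error: you justify commuting one rotation by claiming $\Omega$ is tangential on $\partial\mathcal{K}$ because $\mathcal{K}$ is star-shaped. This is false for a general star-shaped boundary ($\Omega$ is tangential only to circles centered at the origin). The paper's fix is the one you half-gesture at: write $u=\chi u+(1-\chi)u$ with $\chi$ supported near the obstacle, apply $\Omega$ only to the piece $(1-\chi)u$, which lives on all of $\mathbb{R}^2$ with no boundary condition, and control the near-obstacle piece by $\|\partial\partial^au\|_{L^2(|x|\leq2)}$ with no rotation at all (no local energy decay is needed, which in any case is too weak in two dimensions, as the introduction points out). You should make this cutoff, rather than tangency of $\Omega$, the actual justification.
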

\begin{rem}\label{rem1}
For any bounded, star-shaped domain $\mathcal {K}$ and initial data $(f,g)$ with compacted support,
by translation and scaling, without loss of generality, we may assume that $\mathcal {K}\subseteq\mathbb B_1$ is star-shaped with
respect to the origin, $supp\{f,g\}\subseteq\mathbb B_2$ and $t_0=4$.
Thus for the local solution $u(t,x)$ to system \eqref{Y1}, we have  $supp\{u(t,\cdot)\}\subseteq\{x:|x|\leq t-2\}$ for each $t\geq4$.
\end{rem}
\begin{rem}\label{rem2}
Since 
\begin{equation*}
\square
u=B_{\gamma}^{00}\partial_{\gamma}u\partial_t^2u+2B_{\gamma}^{0i}\partial_{\gamma}u\partial_t\partial_iu+B_{\gamma}^{il}\partial_{\gamma}u\partial_i\partial_lu+B(\partial
u),
\end{equation*} 
we have
\begin{align*}
\Box
u=\big(1-B_{\gamma}^{00}\partial_{\gamma}u\big)^{-1}\big(B_{\gamma}^{00}\partial_{\gamma}u\Delta
u+B(\partial
u)+2B_{\gamma}^{0i}\partial_{\gamma}u\partial_t\partial_iu+B_{\gamma}^{il}\partial_{\gamma}u\partial_i\partial_lu\big).
\end{align*}
For small data solutions to system \eqref{Y1}, by Talor's expansion,
we have $\Box u=B(\partial u)+\tilde{B}_{\gamma}^{\alpha
i}\partial_{\gamma}u\partial_{\alpha}\partial_iu+\text{cubic
terms}$. Since the cubic terms don't influence the lower bound of
the life-span of solutions to system \eqref{Y1}, we can omit these
terms. Thus without loss of generality, we assume that at least one
of the indices $\alpha,\beta$ does not equal to $0$.
\end{rem}

\begin{rem}\label{rem3}

 For the boundaryless case, there are many references on the lower bound of the lifespan estimate to the nonlinear
wave equations in two space dimensions, such as Alinhac
\cite{Alinhac1}, Hoshiga \cite{Hoshiga1,Hoshiga3}, Hoshiga and Kubo
\cite{Hoshiga2}, Katayama \cite{Katayama}, Li and Zhou
\cite{Li3,Li2} and on the upper bound of the lifespan, see
Alinhac \cite{Alinhac3,Alinhac4,Alinhac2}. With a loss of
$\ln^3T_{\varepsilon}$, our result on the lower bound of the
life-span $T_{\varepsilon}$ for solutions to system \eqref{Y1} is
sharp, as is illustrated by the finite propagation and the
counterexample of Alinhac \cite{Alinhac3,Alinhac4} in the
boundaryless case.
\end{rem}

\par The rest of this paper is organized as follows:
In section 2, we present some inequalities, which will be used in
the process of the generalized energy method. In section 3, we will establish
 some key estimates, such as scaling Morawetz energy
estimates, improved scaling Morawetz energy estimates, higher order energy
estimates and so on. In section 4, we complete the proof of Theorem
1.1 by the continuity argument.
\section{Preliminary}
\begin{lem}\label{lem1}
Denote $\langle
A\rangle=(1+|A|^2)^{1/2}$, for any smooth function $v(t,\cdot)$ defined on $\mathbb R^2$ with
sufficient decay in the infinity for each $t$, we have
\begin{align}\label{Y19}
&r^{1/2}|v(t,\cdot)|\leq
C\sum_{|a|,j\leq1}\|\nabla^a\Omega^jv(t,\cdot)\|_{L^2(\mathbb R^2)},
\\&\label{Y43}\langle t-r\rangle^{1/2}|v(t,\cdot)|\leq C\sum_{|a|\leq2}\|\langle t-r\rangle^{1/2}\nabla^av(t,\cdot)\|_{L^2(\mathbb R^2)}.
\end{align}
\end{lem}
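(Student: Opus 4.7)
The plan is to prove the two inequalities by distinct weighted Sobolev arguments adapted to the two-dimensional setting; neither requires the boundary to play a role, as both are stated for functions on all of $\mathbb R^2$.

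For \eqref{Y19}, I would pass to polar coordinates $(r,\theta)$, in which $\Omega$ coincides with $\partial_\theta$. The first step is the one-dimensional Sobolev embedding on the circle at radius $r_0$:
\begin{equation*}
|v(r_0,\theta_0)|^2 \leq C \int_0^{2\pi} \bigl(|v|^2+|\Omega v|^2\bigr)(r_0,\theta)\,d\theta =: C F(r_0).
\end{equation*}
The remaining task is to show $r_0 F(r_0)\leq C\sum_{|a|,j\leq 1}\|\nabla^a\Omega^j v\|_{L^2(\mathbb R^2)}^2$. My plan is to apply the fundamental theorem of calculus to $rF(r)$ on $[r_0,\infty)$ (the decay of $v$ at infinity kills the boundary term) to get
\begin{equation*}
r_0 F(r_0) \leq \int_{r_0}^\infty F(r)\,dr + \int_{r_0}^\infty r|F'(r)|\,dr,
\end{equation*}
expand $F'(r)$ in terms of $v,\Omega v,\partial_r v,\Omega\partial_r v$, and apply Cauchy--Schwarz on the second integral so as to reproduce the $L^2(\mathbb R^2)$ measure $r\,dr\,d\theta$. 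Since $|\partial_r v|\leq |\nabla v|$ and $\Omega$ commutes with $\partial_r$ in polar coordinates, every resulting norm is bounded by the right-hand side of \eqref{Y19}. The only delicate bookkeeping is for $\int_{r_0}^\infty F(r)\,dr$, which is not intrinsically an $L^2(\mathbb R^2)$ quantity: for $r_0\geq 1$ I expect to insert the harmless factor $r/r_0\geq 1$ under the integral to convert it into one, and for $r_0\leq 1$ I can fall back on a fixed-ball Sobolev embedding (absorbing $r_0^{1/2}\leq 1$ into the constant).

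For \eqref{Y43}, the plan is to localize on a unit ball where the weight $\langle t-r\rangle$ is essentially constant. Given $x_0\in\mathbb R^2$ with $|x_0|=r_0$, I apply the two-dimensional Sobolev embedding $H^2(B_1(x_0))\hookrightarrow L^\infty(B_1(x_0))$ to obtain
\begin{equation*}
|v(x_0)|\leq C\sum_{|a|\leq 2}\|\nabla^a v\|_{L^2(B_1(x_0))}.
\end{equation*}
On $B_1(x_0)$ one has $|r-r_0|\leq 1$, hence $\langle t-r\rangle\sim \langle t-r_0\rangle$ uniformly on $B_1(x_0)$ (using $\langle\cdot\rangle\geq 1$). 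Multiplying through by $\langle t-r_0\rangle^{1/2}$ and transferring this weight inside the $L^2$ norm, then enlarging the domain to $\mathbb R^2$, yields \eqref{Y43}.

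Neither argument presents a serious obstacle; the step requiring the most care is the measure-matching in the 1D estimate for \eqref{Y19}, where the factor of $r$ produced by the FTC must be paired with $dr$ via Cauchy--Schwarz precisely so as to reconstruct the 2D Lebesgue measure $r\,dr\,d\theta$, which is ultimately what fixes the exponent $1/2$ on the left-hand side.
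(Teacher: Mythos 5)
Your approach to \eqref{Y19} is essentially the paper's: a Sobolev embedding on $\mathbb S^1$ applied to $v$ and $\Omega v$, followed by a radial integration to infinity and Cauchy--Schwarz arranged so that the factor of $r$ reconstructs the measure $r\,dr\,d\theta$; your treatment of \eqref{Y43} by localizing to unit balls, where $\langle t-r\rangle$ is comparable to a constant, is the standard way to make precise the paper's one-line appeal to $H^2(\mathbb R^2)\hookrightarrow L^\infty(\mathbb R^2)$.

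One wrinkle in your argument for \eqref{Y19} deserves attention. The term $\int_{r_0}^\infty F(r)\,dr$ that you propose to estimate separately does not need to be estimated at all: since $\tfrac{d}{dr}(rF)=F+rF'$ and $rF(r)\to 0$ at infinity, one has
\begin{equation*}
r_0F(r_0)=-\int_{r_0}^\infty F(r)\,dr-\int_{r_0}^\infty rF'(r)\,dr\leq\int_{r_0}^\infty r\,|F'(r)|\,dr,
\end{equation*}
because $F\geq 0$ makes the first integral enter with a favorable sign. (Equivalently, the paper simply writes $|v(r_0\theta)|^2=-2\int_{r_0}^\infty v\,\partial_\rho v\,d\rho$ and uses $r_0\leq\rho$ before integrating in $\theta$, which avoids the term from the outset.) This matters because your proposed fallback for $r_0\leq 1$ --- a fixed-ball Sobolev embedding --- would require control of $\|\nabla^2 v\|_{L^2}$ near the origin, which is \emph{not} available from the right-hand side of \eqref{Y19}: the norms there only contain $v$, $\nabla v$, $\Omega v$, $\nabla\Omega v$, and $\nabla\Omega v$ degenerates at $x=0$ (e.g.\ $\partial_1\Omega v=\partial_2v+x_1\partial_1\partial_2v-x_2\partial_1^2v$ reduces to $\partial_2v$ there). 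So you should discard the $\int_{r_0}^\infty F\,dr$ term by the sign observation rather than attempt to bound it; with that adjustment the argument is uniform in $r_0>0$ and coincides with the paper's proof.
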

\begin{proof}
By using a standard Sobolev inequality on $\mathbb S^1$,
$H^1(\mathbb S^1)\hookrightarrow L^{\infty}(\mathbb S^1)$, we have
\begin{align}\label{Y20}
|v(t,x)|\leq \sup_{|\theta|=1}|v(t,r\theta)|\leq
C\sum_{j\leq1}\Big(\int_{\mathbb
S^1}|\Omega^jv(t,r\theta)|^2d\theta\Big)^{1/2}.
\end{align}
A straight calculation shows that
\begin{align}\label{Y21}
|\Omega^jv(t,r\theta)|^2=-2\int_{r}^{\infty}\Omega^jv(t,\rho\theta)\partial_{\rho}\Omega^jv(t,\rho\theta)d\rho.
\end{align}
By inequalities \eqref{Y20}, \eqref{Y21} and H$\ddot{o}$lder
inequality, we have
\begin{align*}
r^{1/2}|v(t,x)|&\leq C\sum_{j\leq1}\Big(\int_{\mathbb
S^1}\int_{r}^{\infty}|\partial_{\rho}\Omega^jv(t,\rho\theta)||\Omega^jv(t,\rho\theta)|\rho
d\rho d\theta\Big)^{1/2}\\&\leq C\sum_{j\leq1}\Big(\int_{\mathbb
R^2} |\partial_r\Omega^jv(t,x)||\Omega^jv(t,x)| dx\Big)^{1/2}
\\&\leq C\sum_{j\leq1}\|\partial_r\Omega^jv\|^{1/2}_{L^2}\sum_{j\leq1}\|\Omega^jv\|^{1/2}_{L^2},
\end{align*}
which concludes the proof of inequality \eqref{Y19}. By the standard
Sobolev inequality on $\mathbb R^2$, $H^2(\mathbb
R^2)\hookrightarrow L^{\infty}(\mathbb R^2)$, obviously inequality
\eqref{Y43} holds.
\end{proof}
\begin{lem}\label{lem2}
For any functions $v(t,\cdot),w(t,\cdot)\in C_c^{\infty}(\mathbb R^2)$, we have
\begin{align*}
\|r^{1/2}vw\|_{L^2(\mathbb R^2)}\leq C\|v\|_{H^1(\mathbb
R^2)}\sum_{j\leq1}\|\Omega^jw\|_{L^2(\mathbb R^2)}.
\end{align*}
\end{lem}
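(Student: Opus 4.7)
The plan is to estimate the square $\|r^{1/2}vw\|_{L^2(\mathbb R^2)}^2=\int_{\mathbb R^2} r|v|^2|w|^2\,dx$ by treating $v$ and $w$ asymmetrically: a one-dimensional Sobolev embedding on the circle $\mathbb S^1$ will absorb the $\Omega$ derivatives on the right-hand side (acting only on $w$), while the radial fundamental theorem of calculus will produce the $H^1$ norm (acting only on $v$). This asymmetry is forced by the shape of the right-hand side, since a naive application of the weighted Sobolev estimate \eqref{Y19} to either factor alone would deposit $\Omega$ derivatives on the wrong function. I expect this bookkeeping to be the only delicate point.

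First I pass to polar coordinates $x=r\theta$ and write $\int r|v|^2|w|^2\,dx=\int_0^\infty r^2\int_{\mathbb S^1}|v|^2|w|^2\,d\theta\,dr$. On each fixed circle, the one-dimensional Sobolev embedding $H^1(\mathbb S^1)\hookrightarrow L^\infty(\mathbb S^1)$, combined with the identity $\Omega=\partial_\theta$ in polar coordinates, gives
$$
\sup_{\theta}|w(r\theta)|^{2}\leq C\sum_{j\leq 1}\int_{\mathbb S^1}|\Omega^j w(r\theta)|^2\,d\theta.
$$
Inserting this bound and splitting $r^2=r\cdot r$, I obtain
$$
\int r|v|^2|w|^2\,dx\leq C\sup_{r>0}\bigl[r\int_{\mathbb S^1}|v(r\theta)|^2\,d\theta\bigr]\cdot\sum_{j\leq 1}\int_0^\infty r\int_{\mathbb S^1}|\Omega^j w|^2\,d\theta\,dr,
$$
whose last factor is exactly $\sum_{j\leq 1}\|\Omega^j w\|_{L^2(\mathbb R^2)}^2$.

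It remains to control $\sup_{r>0}\bigl[r\int_{\mathbb S^1}|v(r\theta)|^2\,d\theta\bigr]$ by $C\|v\|_{H^1(\mathbb R^2)}^2$. Since $v\in C_c^\infty$ vanishes at infinity, the fundamental theorem of calculus gives $|v(r\theta)|^2=-2\int_r^\infty\mathrm{Re}(\bar v\,\partial_\rho v)(\rho\theta)\,d\rho$. Multiplying by $r$, using $r\leq\rho$ inside the integrand, integrating in $\theta$, and recognizing $\rho\,d\rho\,d\theta$ as the Lebesgue measure on $\{|x|>r\}$, I get
$$
r\int_{\mathbb S^1}|v(r\theta)|^2\,d\theta\leq 2\int_{\mathbb R^2}|v|\,|\partial_r v|\,dx\leq 2\|v\|_{L^2}\|\nabla v\|_{L^2}\leq \|v\|_{H^1}^2
$$
by Cauchy--Schwarz and $|\partial_r v|\leq|\nabla v|$. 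Combining the two steps and taking a square root yields the claimed inequality. The whole argument is elementary; the content is entirely in arranging matters so that the angular and radial derivatives land on different factors, as the author's flow chart demands.
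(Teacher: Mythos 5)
Your proposal is correct and follows essentially the same route as the paper: the $H^1(\mathbb S^1)\hookrightarrow L^\infty(\mathbb S^1)$ embedding is applied to $w$ on each circle, and $\sup_{r>0}\bigl(r\int_{\mathbb S^1}|v|^2\,d\theta\bigr)$ is controlled by the radial fundamental theorem of calculus together with $r\leq\rho$ and Cauchy--Schwarz, exactly as in the paper's identity \eqref{a1}. No gaps.
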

\begin{proof}
Since
\begin{align}\label{a1}
r\int_{\mathbb S^1}v^2d\theta=-r\int_r^{+\infty}\int_{\mathbb S^1}2v\partial_{\rho}vd\theta d\rho\leq \int_r^{+\infty}\int_{\mathbb 
S^1}2|v||\partial_{\rho}v|\rho d\theta d\rho,
\end{align}
 a straight calculation shows that
\begin{align*}
\|r^{1/2}vw\|^2_{L^2(\mathbb R^2)}&=\int_{\mathbb
R^2}rv^2w^2dx=\int_0^{\infty}\int_{\mathbb S^1}r^2v^2w^2d\theta dr
\\&\leq C\int_0^{\infty}\int_{\mathbb S^1}r^2v^2d\theta\sum_{j\leq1}\|\Omega^jw\|^2_{L^2(\mathbb S^1)}dr
\\&\leq C\int_0^{\infty}r\sum_{j\leq1}\|\Omega^jw\|^2_{L^2(\mathbb S^1)}dr\sup_{0\leq r\leq\infty}\Big(r\int_{\mathbb S^1}v^2d\theta\Big)
\\&\leq C\sum_{j\leq1}\|\Omega^jw\|^2_{L^2(\mathbb R^2)}\int_0^{\infty}\rho\int_{\mathbb S^1}|v||\partial_{\rho}v|d\theta d\rho,
\end{align*}
which completes the proof of Lemma \ref{lem2}.
\end{proof}
\begin{lem}\label{lem3}
For any functions $v(t,\cdot),w(t,\cdot)\in C_c^{\infty}(\mathbb R^2)$ with
$supp\{v(t,\cdot),w(t,\cdot)\}\subseteq\{x:t/2\leq|x|\leq t\}$,
there is a positive constant $C$ independent on $t$ such that
\begin{align}\label{Y46}
\|r^{1/2}vw_r\|_{L^2(\mathbb R^2)}\leq C\|v\|_{H^1(\mathbb
R^2)}\big(\|\partial_rw\|_{H^2(\mathbb
R^2)}+\|\Omega w\|_{L^2(\mathbb R^2)}\big).
\end{align}
\end{lem}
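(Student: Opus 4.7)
My plan is to imitate the proof of Lemma 1.2, with $w_r$ playing the role of $w$. Writing the left-hand side in polar coordinates and applying the one-dimensional Sobolev embedding $H^1(\mathbb S^1)\hookrightarrow L^\infty(\mathbb S^1)$ to $w_r(r,\cdot)$, I would obtain
\begin{equation*}
\|r^{1/2}vw_r\|_{L^2(\mathbb R^2)}^2 = \int_0^\infty r^2\int_{\mathbb S^1} v^2 w_r^2\,d\theta\,dr \leq C\sup_r\!\big(r\|v(r,\cdot)\|_{L^2(\mathbb S^1)}^2\big)\cdot\int_0^\infty r\,\big(\|w_r\|_{L^2(\mathbb S^1)}^2+\|\Omega w_r\|_{L^2(\mathbb S^1)}^2\big)\,dr.
\end{equation*}
The $\mathbb S^1$-level bound \eqref{a1} gives $\sup_r r\|v(r,\cdot)\|_{L^2(\mathbb S^1)}^2 \leq C\|v\|_{L^2}\|\partial_r v\|_{L^2}\leq C\|v\|_{H^1}^2$, while the radial integration reconstitutes the full $\mathbb R^2$ norms. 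Altogether
\begin{equation*}
\|r^{1/2}vw_r\|_{L^2(\mathbb R^2)}^2 \leq C\|v\|_{H^1}^2\Big(\|\partial_r w\|_{L^2(\mathbb R^2)}^2 + \|\Omega\partial_r w\|_{L^2(\mathbb R^2)}^2\Big).
\end{equation*}

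The first term is trivially absorbed into $\|\partial_r w\|_{H^2}^2$, and using $\Omega\partial_r=\partial_r\Omega$ (the two operators commute in polar coordinates) the task reduces to controlling $\|\partial_r\Omega w\|_{L^2}$ by $\|\partial_r w\|_{H^2}+\|\Omega w\|_{L^2}$ with a constant independent of $t$. For this I would integrate by parts in $r$: since boundary terms vanish by compact support,
\begin{equation*}
\|\partial_r\Omega w\|_{L^2}^2 = -\int\frac{\Omega w\cdot \partial_r\Omega w}{r}\,dx - \int \Omega w\cdot \partial_r^2\Omega w\,dx.
\end{equation*}
This is exactly where the annulus hypothesis $\mathrm{supp}\{v,w\}\subseteq\{t/2\leq|x|\leq t\}$ enters: the first factor on the right can be bounded using $1/r\leq 2/t$ and Young's inequality to absorb $\frac{1}{2}\|\partial_r\Omega w\|_{L^2}^2$ into the left, while the second factor is controlled by Cauchy--Schwarz, the commutation $\partial_r^2\Omega w=\Omega\partial_r^2 w$, and careful use of the product structure on the annulus, which should produce the desired $t$-uniform dependence on $\|\Omega w\|_{L^2}$ and $\|\partial_r w\|_{H^2}$.

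The main obstacle I anticipate is precisely the second integral $\int \Omega w\cdot \partial_r^2\Omega w\,dx$: the naive pointwise estimate $|\Omega f|\leq r|\nabla f|\leq t|\nabla f|$ would cost a factor of $t$ and destroy the sought independence. Overcoming this requires a more delicate cancellation argument that genuinely uses the annular geometry (e.g.\ redistributing one radial derivative via additional integrations by parts or expressing $\partial_r^2\Omega w$ in terms of tangential derivatives of $\partial_r w$). This refined step is precisely the new technical ingredient that distinguishes Lemma~1.3 from Lemma~1.2 and explains why the support hypothesis is needed here but not there.
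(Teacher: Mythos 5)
Your first step is fine, but the reduction it produces is already fatal. After running the Lemma \ref{lem2} argument with $w_r$ in place of $w$, you are left needing the $t$-uniform bound $\|\Omega\partial_r w\|_{L^2(\mathbb R^2)}\leq C\big(\|\partial_rw\|_{H^2}+\|\Omega w\|_{L^2}\big)$, and this inequality is false on the stated class of functions; no integration by parts or cancellation can recover it. Test it on $w=e^{\mathbf{i}m\theta}\phi(r)$ with $\phi$ a bump of width $\delta\leq1$ supported near $r=3t/4$ and with $t\gg m$. Then $\|\Omega\partial_rw\|_{L^2}\approx m(t/\delta)^{1/2}$, while $\|\Omega w\|_{L^2}\approx m(t\delta)^{1/2}$ and $\|\partial_rw\|_{H^2}\approx t^{1/2}\delta^{-5/2}$ (every angular contribution to the $H^2$ norm carries a factor $m/r\approx m/t$ and is negligible). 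The required inequality reduces to $m\leq C(\delta^{-2}+m\delta)$, which fails once $\delta<1/(2C)$ and $m$ is large. So the ``more delicate cancellation argument'' you defer to the final step does not exist at that stage: the intermediate quantity $\|\Omega\partial_rw\|_{L^2}$ is genuinely stronger than the right-hand side of \eqref{Y46}, and by forming it you have already given away the lemma.

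The paper's proof succeeds precisely because it never places a full angular derivative on $w_r$. After peeling off the $v$-factor via \eqref{a1} exactly as you do, it bounds $\sup_\theta|w_r|$ not by $H^1(\mathbb S^1)\hookrightarrow L^\infty(\mathbb S^1)$ but by the fractional embedding $\|f\|_{L^\infty(\mathbb S^1)}\leq C\|f\|_{L^2}+C\|\partial_\theta^{2/3}f\|_{L^2}$ (any order $>1/2$ suffices for summability of $\sum_m(1+m^2)^{-2/3}$). This leaves the mixed quantity $\|\partial_\theta^{2/3}\partial_rw\|^2_{L^2_rL^2_\theta}$, which in a double Fourier series on the annulus carries the weight $m^{4/3}(n/t)^2$. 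The elementary interpolation $m^{4/3}(n/t)^2\leq(n/t)^6+m^2$ then splits this into either three pure radial derivatives (controlled by $\|\partial_rw\|_{H^2}$) or one pure angular derivative with no radial derivative at all (controlled by $\|\Omega w\|_{L^2}$). The exponent $2/3$ is what makes the split possible: $2\cdot\tfrac23=\tfrac43<2$ leaves enough room for the weight $m^2$ alone to dominate in the regime $(n/t)^2\leq m^{2/3}$. This Fourier-interpolation step is the missing ingredient in your plan, and it has to be inserted before, not after, the angular derivative lands on $w_r$.
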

\begin{proof}
Let $x_1=r\cos\theta,x_2=r\sin\theta$, where
$(r,\theta)\in[t/2,t]\times[0,2\pi]$. Thus
$\partial_{\theta}=\Omega$. By using \eqref{a1}, we have
\begin{align}\label{Y44}
\|r^{1/2}vw_r\|^2_{L^2(\mathbb R^2)}&=\int_{\mathbb
R^2}rv^2w_r^2dx=\int_{t/2}^{t}\int_0^{2\pi}r^2v^2w_r^2d\theta dr
\nonumber\\&\leq \bigg(\sup_{t/2\leq r\leq
t}\Big\{r\int_0^{2\pi}v^2d\theta \Big\}\bigg)
\int_{t/2}^tr\sup_{0\leq \theta\leq2\pi}w_r^2dr \nonumber\\&\leq
Ct\|v\|^2_{H^1(\mathbb R^2)}\int_{t/2}^t\sup_{0\leq
\theta\leq2\pi}w_r^2dr.
\end{align}
\par For any smooth function $f(\theta)$ satisfying $f(\theta)=f(\theta+2\pi)$ for any $\theta$, by Fourier series expansion, we
can expand $f(\theta)=\sum_{m\in\mathbb Z}A_me^{\textbf{i}m\theta}$
and let $
\partial_{\theta}^{2/3}f=\sum_{m\in\mathbb
Z}A_me^{\textbf{i}m\theta}(\textbf{i}m)^{2/3}$, where the Fourier
coefficients $A_m\in\mathbb C$. Thus by using the inequality
\[
\sum_{j\in \mathbb Z}|a_j||b_j|\leq \left(\sum_{j\in \mathbb
Z}|a_j|^2\right)^{1/2}(\sum_{j\in \mathbb Z}|b_j|^2\big)^{1/2},
\] 
we
have
\begin{align}\label{Y45}
|f(\theta)|\leq\sum_{m\in\mathbb Z}|A_m|&\leq\Big(\sum_{m\in\mathbb
Z}(1+m^2)^{-2/3}\Big)^{1/2}\Big(\sum_{m\in\mathbb
Z}|A_m|^2(1+m^2)^{2/3}\Big)^{1/2}\nonumber\\&\leq
C\Big(\sum_{m\in\mathbb Z}|A_m|^2(1+m^2)^{2/3}\Big)^{1/2}
\nonumber\\&\leq C\|f\|_{L_{\theta}^2([0,2\pi])}+
C\|\partial_{\theta}^{2/3}f\|_{L_{\theta}^2([0,2\pi])}.
\end{align}
\par Since the function $w(t,r,\theta)$ can be regarded as a
periodic smooth function on $\mathbb R\times\mathbb R$ satisfying
$w(t,r,\theta)=w(t,r+t/2,\theta)$ and
$w(t,r,\theta)=w(t,r,\theta+2\pi)$, by Fourier series expansion, we
have 
\[
w(t,r,\theta)=\sum_{m,n\in\mathbb
Z}A_{mn}(t)e^{\textbf{i}m\theta}e^{\textbf{i}4\pi n(r-t/2)/t},
\]
where the Fourier coefficients $A_{mn}\in\mathbb C$. Thus
\[
\partial_{\theta}^{2/3}\partial_rw(t,r,\theta)=\sum_{m,n\in\mathbb
Z}(\textbf{i}m)^{2/3}4\pi
n\textbf{i}/tA_{mn}(t)e^{\textbf{i}m\theta}e^{\textbf{i}4\pi
n(r-t/2)/t}.
\]
 When $(n/t)^2\geq m^{2/3}$, it holds
$m^{4/3}(n/t)^2\leq (n/t)^6$; while when $(n/t)^2\leq m^{2/3}$, it holds
$m^{4/3}(n/t)^2\leq m^2$. Hence by inequality \eqref{Y45}, we have
\begin{align*}
\int_{t/2}^t\sup_{0\leq \theta\leq2\pi}w_r^2dr&\leq C\|\partial_rw\|^2_{L_r^2L_{\theta}^2([t/2,t]\times[0,2\pi])}+
C\|\partial_{\theta}^{2/3}\partial_rw\|^2_{L_r^2L_{\theta}^2([t/2,t]\times[0,2\pi])}
\\&\leq C\|\partial_rw\|^2_{L_r^2L_{\theta}^2([t/2,t]\times[0,2\pi])}+Ct\sum_{m,n\in\mathbb
Z}m^{4/3}(n/t)^2|A_{mn}(t)|^2
\\&\leq C\|\partial_rw\|^2_{L_r^2L_{\theta}^2([t/2,t]\times[0,2\pi])}+Ct\sum_{m,n\in\mathbb Z}(n/t)^6|A_{mn}(t)|^2
\\&\quad\ +Ct\sum_{m,n\in\mathbb
Z}m^2|A_{mn}(t)|^2
\\&\leq
C\Big(\|\partial_rw\|^2_{L_r^2L_{\theta}^2}+\|\partial_r^3w\|^2_{L_r^2L_{\theta}^2}+\|\partial_{\theta}w\|^2_{L_r^2L_{\theta}^2}\Big),
\end{align*}
which implies inequality \eqref{Y46} by using inequality
\eqref{Y44}.
\end{proof}
\begin{lem}\label{lem4}
Let $u(t,\cdot)$ be a smooth function on $\mathbb
R^2\backslash\mathcal {K}$ with sufficient decay in the infinity for
each $t$ and $u|_{\partial\mathcal {K}}=0$. Then for any $t$ and any
multi-index $a$, we have
\begin{align}
\label{Y25}&r^{1/2}|\partial\partial^au|\leq
C\sum_{\substack{|b|+j\leq
|a|+2\\j\leq1}}\|\partial\partial^b\Omega^ju\|_{L^2(\mathbb
R^2\backslash\mathcal {K})},
\\&\label{Y30}\langle t-r\rangle^{1/2}|\partial\partial^au|\leq
C\sum_{|b|\leq |a|+2}\|\langle
t-r\rangle^{1/2}\partial\partial^bu\|_{L^2(\mathbb
R^2\backslash\mathcal {K})}.
\end{align}
\end{lem}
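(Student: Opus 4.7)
The plan is to reduce the exterior-domain estimates to the whole-plane bounds of Lemma~\ref{lem1} via a spatial cutoff, then handle the near-obstacle region separately by a standard local Sobolev embedding, exploiting the boundedness of $\mathcal{K}$. By Remark~\ref{rem1} I may assume $\mathcal{K}\subseteq\mathbb{B}_1$. I would fix a smooth radial cutoff $\chi(r)$ with $\chi\equiv 0$ on $\{r\leq 3/2\}$ and $\chi\equiv 1$ on $\{r\geq 2\}$, and set $v:=\partial\partial^a u$; then $\chi v$ has support in $\mathbb{R}^2\setminus\mathcal{K}$ and extends by zero to a smooth function on $\mathbb{R}^2$.

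For \eqref{Y25} in the region $r\geq 2$, applying \eqref{Y19} to $\chi v$ gives
\[
r^{1/2}|v(t,x)|=r^{1/2}|(\chi v)(t,x)|\leq C\sum_{|c|,\,j\leq 1}\|\nabla^c\Omega^j(\chi v)\|_{L^2(\mathbb{R}^2)}.
\]
A Leibniz expansion (using boundedness of $\chi$ and $\nabla\chi$, together with $\Omega\chi\equiv 0$ for radial $\chi$), combined with the commutator identities $[\Omega,\partial_t]=0$ and $[\Omega,\partial_i]\in\{\pm\partial_j\}$ to move each $\Omega$ past the $|a|+1$ Cartesian partials in $v$, rewrites the RHS as a finite sum of norms $\|\partial\partial^b\Omega^{j'}u\|_{L^2(\mathbb{R}^2\setminus\mathcal{K})}$ with $|b|+j'\leq |a|+2$ and $j'\leq 1$, which is exactly the RHS of \eqref{Y25}. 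For $r<2$ the weight $r^{1/2}$ is bounded by $\sqrt{2}$, and a standard Sobolev embedding $H^2(\mathcal{D})\hookrightarrow L^\infty(\mathcal{D})$ on the bounded Lipschitz domain $\mathcal{D}:=(\mathbb{R}^2\setminus\mathcal{K})\cap\mathbb{B}_3$ yields
\[
|v(t,x)|\leq C\|v\|_{H^2(\mathcal{D})}\leq C\sum_{|b|\leq |a|+2}\|\partial\partial^b u\|_{L^2(\mathbb{R}^2\setminus\mathcal{K})},
\]
which is the $j'=0$ sub-sum on the RHS of \eqref{Y25}.

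For \eqref{Y30} the scheme is identical, replacing \eqref{Y19} by \eqref{Y43}; since no $\Omega$ appears there, the commutator bookkeeping is absent. The only new wrinkle is that on the near-obstacle region $r\leq 2$ the weight $\langle t-r\rangle^{1/2}$ is not bounded by a constant. However, for every $y\in\mathcal{D}$ one has $\langle t-r\rangle\asymp\langle t-|y|\rangle$ uniformly, so the weight can be pulled freely inside the local Sobolev embedding and one obtains the desired control by $\sum_{|b|\leq |a|+2}\|\langle t-r\rangle^{1/2}\partial\partial^b u\|_{L^2(\mathbb{R}^2\setminus\mathcal{K})}$.

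I do not anticipate any genuine obstacle here: the proof is standard once the two-zone splitting is fixed. The only bookkeeping points are tracking the commutators so that at most one $\Omega$ survives on the RHS of \eqref{Y25}, and the comparability of the $\langle t-r\rangle$ weight on the compact-in-space near-obstacle region for \eqref{Y30}; both are routine.
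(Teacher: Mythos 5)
Your proof is correct and follows essentially the same route as the paper: split into the near-obstacle region $|x|\leq 2$ (handled by the local Sobolev embedding $H^2\hookrightarrow L^\infty$ on $(\mathbb R^2\setminus\mathcal K)\cap\mathbb B_3$) and the region $|x|\geq 2$ (handled by cutting off near $\mathcal K$ and invoking Lemma \ref{lem1}). The only cosmetic difference is that you apply the cutoff to $\partial\partial^a u$ rather than to $u$ itself, which spares you the Poincar\'e inequality the paper uses to absorb the commutator terms where derivatives land on the cutoff.
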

\begin{proof}
When $|x|\leq2$, by using the standard Sobolev Embedding Theorem:
$H^2(\Omega)\hookrightarrow L^{\infty}(\Omega)$, where
$\Omega=(\mathbb R^2\backslash\mathcal {K})\cap\mathbb B_3$,
inequality \eqref{Y25} holds. Let $\rho(x)$ be a smooth cut-off
function with $\rho(x)=1$ for $|x|\geq2$ and $\rho(x)=0$ for
$|x|\leq1$, and $v=\rho u$. Then when $|x|\geq2$, by inequality
\eqref{Y19}, we have
\begin{align*}
r^{1/2}|\partial\partial^au|=r^{1/2}|\partial\partial^av|\leq
C\sum_{\substack{|b|+j\leq
|a|+2\\j\leq1}}\|\partial\partial^b\Omega^jv\|_{L^2(\mathbb R^2)},
\end{align*}
which implies inequality \eqref{Y25} holds when $|x|\geq2$ by
Poincar\'{e} inequality. By using inequality \eqref{Y43}, we
can prove inequality \eqref{Y30} in a similar way.
\end{proof}
\section{Scaling Morawetz Energy Estimates, Improved Scaling Morawetz Energy Estimates and Energy Estimates}
In this section, we will study the following system
\begin{equation}\label{Y56}
\left \{
\begin{array}{lllll}
\Box_h u=F,\\
u|_{\partial\mathcal {K}}=0,
\end{array} \right.
\end{equation}
where
\begin{align*}
\Box_h u=\Box
u+h^{\alpha\beta}\partial_{\alpha}\partial_{\beta}u.
\end{align*}
We shall assume that $h^{\alpha\beta}$ satisfy the symmetry
conditions
\begin{align}\label{Y38}
h^{\alpha\beta}=h^{\beta\alpha},
\end{align}
as well as the size conditions
\begin{align}\label{Y39}
|h|=\sum_{\alpha,\beta=0}^2|h^{\alpha\beta}|\ll1.
\end{align}
\par Define the scaling operator by
$S=t\partial_t+r\partial_r=t\partial_t+x\cdot\nabla$. First, we
prove the following scaling Morawetz energy estimate on $\mathbb
R^2\backslash\mathcal {K}$ for the perturbed wave equation.
\begin{lem}\label{lem5}
Let $\mathcal {K}$ be bounded and star-shaped with respect to the origin.
Assume that the perturbation terms $h^{\alpha\beta}$ satisfy conditions \eqref{Y38} and \eqref{Y39}.
Then for any smooth function $u$ with $u|_{\partial\mathcal {K}}=0$ and $supp\{u(t,\cdot)\}\subseteq\{x:|x|\leq t-2\}$, we have
\begin{align}\label{Y29}
&\frac12\frac{d}{dt}\int_{\mathbb R^2\backslash\mathcal
{K}}\bigg(|t-r||\partial u|^2+r\Big(u_t+u_r+\frac
u{2r}\Big)^2+\frac{|\Omega
u|^2}r+\frac {u^2}{4r}\nonumber\\&+h^{0\beta}(2Su+u)u_{\beta}
-th^{\alpha\beta}u_{\alpha}u_{\beta}\bigg)dx
\nonumber\\&\leq\int_{\mathbb R^2\backslash\mathcal {K}}\Box_h u
\big((S+1/2)u\big)dx+C\int_{\mathbb R^2\backslash\mathcal {K}}|Sh||\partial u|^2dx
\nonumber\\&\quad+C\int_{\mathbb R^2\backslash\mathcal {K}}|\partial h||\partial u|\big(|Su|+|u|\big)dx.
\end{align}
\end{lem}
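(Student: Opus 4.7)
The plan is to multiply $\Box_h u = F$ by the conformal scaling multiplier $Mu := (S+\tfrac12)u = tu_t+ru_r+u/2$ and integrate by parts over $\R^2\setminus\mathcal{K}$. The weight $\tfrac12$ is precisely the two-dimensional conformal correction, and is what will allow the Hardy-type term $u^2/(4r)$ and the perfect square of the good derivative $u_t+u_r+u/(2r)$ to emerge in the final energy density.

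For the flat piece $\int \Box u\cdot Mu\,dx$, I would split $\Box = \partial_t^2-\Delta$ and integrate each of the six subproducts by parts. The main ingredients are: the $\R^2$ identity $\int r\partial_r f\,dx = -2\int f\,dx$ (coming from $\nabla\cdot x = 2$ and boundary vanishing), which is what converts $u_{tt}\cdot ru_r$ into a $d/dt$ plus $\int u_t^2\,dx$; the $-\Delta u\cdot Mu$ integration by parts in space, where the star-shaped hypothesis $x\cdot\nu\le 0$ on $\partial\mathcal{K}$ (with $\nu$ outward to $\R^2\setminus\mathcal{K}$) and the Dirichlet condition $u|_{\partial\mathcal{K}}=0$ (forcing $\nabla u=(\partial_\nu u)\nu$) collapses the $\partial\mathcal{K}$ contribution to the non-negative quantity $-\tfrac12\int_{\partial\mathcal{K}}(x\cdot\nu)(\partial_\nu u)^2\,dS$; and the Hardy-like integral identity $\int uu_r\,dx=-\int u^2/(2r)\,dx$ (from $\nabla\cdot\hat r=1/r$). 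Together these rewrite the raw bulk contribution as $\tfrac12\frac{d}{dt}$ of $(t-r)|\partial u|^2+r(u_t+u_r+u/(2r))^2+|\Omega u|^2/r+u^2/(4r)$, using the support assumption $r\le t-2$ to identify $t-r = |t-r|$.

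For the perturbation $\int h^{\alpha\beta}u_{\alpha\beta}\cdot Mu\,dx$, I would use the symmetric-$h$ identity $h^{\alpha\beta}u_{\alpha k}u_\beta=\tfrac12\partial_k(h^{\alpha\beta}u_\alpha u_\beta)-\tfrac12(\partial_k h^{\alpha\beta})u_\alpha u_\beta$ in tandem with the spacetime divergence formula $Mu\cdot h^{\alpha\beta}u_{\alpha\beta}=\partial_\alpha(Mu\cdot h^{\alpha\beta}u_\beta)-(\partial_\alpha Mu)h^{\alpha\beta}u_\beta-Mu\cdot(\partial_\alpha h^{\alpha\beta})u_\beta$. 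The $\alpha=0$ part of the spacetime divergence produces $\tfrac12\frac{d}{dt}\int h^{0\beta}(2Su+u)u_\beta\,dx$; the explicit $t$-factor in $tu_t$ produces, after pulling $t$ outside the $d/dt$, the compensating $-\tfrac12\frac{d}{dt}\int t h^{\alpha\beta}u_\alpha u_\beta\,dx$. The undifferentiated $h$-cross-terms contributed by the three subpieces $tu_t$, $ru_r$, $u/2$ (along with the $\nabla\cdot x=2$ companion from $ru_r$ and $\partial_t t = 1$ companion from $tu_t$) cancel exactly, leaving only two surviving remainders: $\tfrac12\int(Sh^{\alpha\beta})u_\alpha u_\beta\,dx$ (obtained by recognizing $\tfrac{t}{2}\partial_t h+\tfrac12 x^k\partial_k h=\tfrac12 Sh$) and $-\int Mu\cdot(\partial_\alpha h^{\alpha\beta})u_\beta\,dx$, which are controlled by the $|Sh||\partial u|^2$ and $|\partial h||\partial u|(|Su|+|u|)$ error integrals in the statement. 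Extra $\partial\mathcal{K}$ boundary contributions from this step are $O(|h|)$ multiples of $\int_{\partial\mathcal{K}}(x\cdot\nu)(\partial_\nu u)^2\,dS$ and are absorbed by the dominant non-negative unperturbed boundary term since $|h|\ll 1$.

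The main obstacle I foresee is the bookkeeping of the cross-cancellations in the perturbative step: one must verify that the three undifferentiated $h$-cross-terms from $tu_t$, $ru_r$, and $u/2$ sum to zero exactly, so that only the $Sh$ and $\partial h$ remainders survive. A secondary, but structurally important, subtlety is the algebraic reorganization in the flat step of the raw output $\tfrac12\frac{d}{dt}\int(t|\partial u|^2+2ru_tu_r+uu_t)\,dx$ into the stated positive-definite density containing the good-derivative square and the Hardy term; this hinges on the specific coefficient $1/2$ in the multiplier and is what makes the estimate useful for extracting the $\langle t-r\rangle^{1/2}$-weighted decay advertised in the paper's flow chart.
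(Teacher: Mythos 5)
Your proposal is correct and follows essentially the same route as the paper: the multiplier $(S+\tfrac12)u$, the divergence identities for $tu_t$, $ru_r$, $u/2$, the star-shaped sign of the $\partial\mathcal{K}$ contribution under the Dirichlet condition, the identity $\int uu_r\,dx=-\int u^2/(2r)\,dx$ completing the square into the good-derivative and Hardy terms, and the symmetric-$h$ bookkeeping leaving only the $Sh$ and $\partial h$ remainders. The two points you flag as delicate (the exact cancellation of the undifferentiated $h$-cross-terms and the reorganization of $tu_t^2+t|\nabla u|^2+2ru_tu_r+uu_t$) are precisely the paper's identities \eqref{Y31} and \eqref{Y11}, and they work out as you describe.
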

\begin{proof}
By multiplying $\Box u$ by $u_t$, we have
\begin{align}\label{Y3}
u_t\Box u&=\frac12\partial_t(u_t^2)-\nabla\cdot(u_t\nabla
u)+\nabla u\cdot\nabla u_t
\nonumber\\&=\frac12\partial_t(u_t^2+|\nabla
u|^2)-\nabla\cdot(u_t\nabla u),
\end{align}
which implies
\begin{align}\label{Y4}
(tu_t)\Box u=\frac12\partial_t(tu_t^2+t|\nabla
u|^2)-\frac12(u_t^2+|\nabla u|^2)-\nabla\cdot(tu_t\nabla u).
\end{align}
By multiplying $\Box u$ by $x\cdot\nabla u$, we have
\begin{align}\label{Y5}
(x\cdot\nabla u)\Box u&=\partial_t(u_tx\cdot\nabla
u)-\frac12x\cdot\nabla(u_t^2)-\nabla\cdot(\nabla ux\cdot\nabla u)
\nonumber\\&\quad+|\nabla u|^2+\frac12x\cdot\nabla(|\nabla u|^2)
\nonumber\\&=\partial_t(u_tx\cdot\nabla
u)+\frac12\nabla\cdot(x|\nabla
u|^2-xu_t^2)+u_t^2\nonumber\\&\quad-\nabla\cdot(\nabla ux\cdot\nabla
u).
\end{align}
By multiplying $\Box u$ by $u/2$, we have
\begin{align}\label{Y6}
(u/2)\Box
u&=\frac12\partial_t(uu_t)-\frac12u_t^2-\frac12\nabla\cdot(u\nabla
u)+\frac12|\nabla u|^2.
\end{align}
The combination of equalities \eqref{Y4}, \eqref{Y5} and \eqref{Y6}
shows that
\begin{align}\label{Y7}
&\Box
u(tu_t+ru_r+u/2)\nonumber\\&=\frac12\partial_t(tu_t^2+t|\nabla
u|^2+2ru_tu_r+uu_t)\nonumber\\&\quad+\frac12\nabla\cdot(x|\nabla
u|^2-xu_t^2)-\nabla\cdot(tu_t\nabla
u)\nonumber\\&\quad-\nabla\cdot(\nabla ux\cdot\nabla
u)-\frac12\nabla\cdot(u\nabla u).
\end{align}
\par Now let's calculate the perturbation terms. By using the symmetry conditions \eqref{Y38}, for any $\gamma=0,1,2$, we have
\begin{align*}
&\big(h^{\alpha\beta}\partial_{\alpha}\partial_{\beta}u\big)\partial_{\gamma}u
\\&=\partial_{\alpha}\big(h^{\alpha\beta}u_{\gamma}u_{\beta}\big)
-\big(\partial_{\alpha}h^{\alpha\beta}\big)u_{\gamma}u_{\beta}-h^{\alpha\beta}\partial_{\gamma}\partial_{\alpha}u\partial_{\beta}u
\\&=\partial_{\alpha}\big(h^{\alpha\beta}u_{\gamma}u_{\beta}\big)
-\big(\partial_{\alpha}h^{\alpha\beta}\big)u_{\gamma}u_{\beta}
-\frac12\partial_{\gamma}\big(h^{\alpha\beta}u_{\alpha}u_{\beta}\big)
+\frac12(\partial_{\gamma}h^{\alpha\beta})u_{\alpha}u_{\beta},
\end{align*}
which implies
\begin{align}\label{Y35}
&\big(h^{\alpha\beta}\partial_{\alpha}\partial_{\beta}u\big)\big(Su\big)
\nonumber\\&=\big(h^{\alpha\beta}\partial_{\alpha}\partial_{\beta}u\big)\big(x_{\gamma}\partial_{\gamma}u\big)
\nonumber\\&=\partial_{\alpha}\big(h^{\alpha\beta}(Su)u_{\beta}\big)
-\big(\partial_{\alpha}h^{\alpha\beta}\big)\big(Su\big)u_{\beta}
-\frac12\partial_{\gamma}\big(x_{\gamma}h^{\alpha\beta}u_{\alpha}u_{\beta}\big)
\nonumber\\&\quad+\frac12(Sh^{\alpha\beta})u_{\alpha}u_{\beta}+\frac12h^{\alpha\beta}u_{\alpha}u_{\beta}.
\end{align}
The combination of equality \eqref{Y35} and the following equality
\begin{align*}
&\big(h^{\alpha\beta}\partial_{\alpha}\partial_{\beta}u\big)\big(u/2\big)
\\&=\frac12\partial_{\alpha}\big(h^{\alpha\beta}uu_{\beta}\big)
-\frac12\big(\partial_{\alpha}h^{\alpha\beta}\big)uu_{\beta}
-\frac12h^{\alpha\beta}u_{\alpha}u_{\beta}
\end{align*}
implies that
\begin{align}\label{Y31}
&\big(h^{\alpha\beta}\partial_{\alpha}\partial_{\beta}u\big)\big((S+1/2)u\big)
\nonumber\\&=\partial_{\alpha}\big(h^{\alpha\beta}(Su+u/2)u_{\beta}\big)
-\big(\partial_{\alpha}h^{\alpha\beta}\big)\big(Su\big)u_{\beta}
-\frac12\partial_{\gamma}\big(x_{\gamma}h^{\alpha\beta}u_{\alpha}u_{\beta}\big)
\nonumber\\&\quad+\frac12(Sh^{\alpha\beta})u_{\alpha}u_{\beta}-\frac12\big(\partial_{\alpha}h^{\alpha\beta}\big)uu_{\beta}.
\end{align}
By equalities \eqref{Y7} and \eqref{Y31}, we have
\begin{align}\label{Y37}
&(\Box_h
u)(Su+u/2)\nonumber\\&=\frac12\partial_t(tu_t^2+t|\nabla
u|^2+2ru_tu_r+uu_t+h^{0\beta}(2Su+u)u_{\beta}-th^{\alpha\beta}u_{\alpha}u_{\beta})\nonumber\\&\quad+\frac12\nabla\cdot(x|\nabla
u|^2-xu_t^2)-\nabla\cdot(tu_t\nabla
u)-\nabla\cdot(\nabla ux\cdot\nabla
u)-\frac12\nabla\cdot(u\nabla u)\nonumber\\&\quad+
\partial_i\big(h^{i\beta}(Su+u/2)u_{\beta}\big)
-\frac12\nabla\cdot\big(xh^{\alpha\beta}u_{\alpha}u_{\beta}\big)
\nonumber\\&\quad-\big(\partial_{\alpha}h^{\alpha\beta}\big)\big(Su\big)u_{\beta}
+\frac12(Sh^{\alpha\beta})u_{\alpha}u_{\beta}-\frac12\big(\partial_{\alpha}h^{\alpha\beta}\big)uu_{\beta}.
\end{align}
Let $\vec{n}=(n_1,n_2)$ be the unit outward normal to $\mathcal
{K}$. The boundary condition $u|_{\partial\mathcal {K}}=0$ shows
that $u_t=0$ and $\frac{\partial u}{\partial x_i}=n_i\frac{\partial
u}{\partial n}$, $i=1,2$, on $\partial\mathcal {K}$. Since $\mathcal
{K}$ is star-shaped with respect to the origin, we know $\langle
x,\vec{n}\rangle|_{\partial\mathcal {K}}\geq0$, which implies
\begin{align*}
&-\frac12\int_{\partial\mathcal {K}}\langle x,\vec{n}\rangle|\nabla u|^2dS
+\int_{\partial\mathcal {K}}\frac{\partial u}{\partial n}\big(x\cdot\nabla u\big)dS
\\&\quad-\int_{\partial\mathcal {K}}n_ih^{il}\big(x\cdot\nabla u\big)\partial_ludS
+\frac12\int_{\partial\mathcal {K}}\langle x,\vec{n}\rangle h^{\alpha\beta}u_{\alpha}u_{\beta}dS
\\&=\int_{\partial\mathcal {K}}\Big(\frac{1-h^{il}n_in_l}2\Big)\langle x,\vec{n}\rangle\Big|\frac{\partial u}{\partial n}\Big|^2dS\\
&\geq0,
\end{align*}
where we used the size condition \eqref{Y39}.
Thus by integrating equality \eqref{Y37} with respect to $x$ on the
domain $\mathbb R^2\backslash\mathcal {K}$, we obtain
\begin{align}\label{Y28}
&\frac12\frac{d}{dt}\int_{\mathbb R^2\backslash\mathcal {K}}(tu_t^2+t|\nabla
u|^2+2ru_tu_r+uu_t\nonumber\\&\qquad\qquad\quad+h^{0\beta}(2Su+u)u_{\beta}-th^{\alpha\beta}u_{\alpha}u_{\beta})dx
\nonumber\\&\leq\int_{\mathbb R^2\backslash\mathcal {K}}(\Box_h
u)(Su+u/2)dx+C\int_{\mathbb R^2\backslash\mathcal {K}}|Sh||\partial u|^2dx
\nonumber\\&\quad+C\int_{\mathbb R^2\backslash\mathcal {K}}|\partial h||\partial u|\big(|Su|+|u|\big)dx.
\end{align}
A straight calculation shows that
\begin{align}\label{Y11}
&tu_t^2+t|\nabla u|^2+2ru_tu_r+uu_t\nonumber\\&=|t-r|(u_t^2+|\nabla
u|^2)+\frac{|\Omega u|^2}r\nonumber\\&\quad+r\Big(u_t+u_r+\frac
u{2r}\Big)^2-\frac {u^2}{4r}-uu_r,
\end{align}
and
\begin{align}\label{Y27}
\int_{\mathbb R^2\backslash\mathcal
{K}}uu_rdx=-\int_{\mathbb R^2\backslash\mathcal {K}}u^2/(2r)dx.
\end{align}
The combination of \eqref{Y28}, \eqref{Y11} and \eqref{Y27} implies
that inequality \eqref{Y29} holds.
\end{proof}
To control the right-hand side of inequality \eqref{Y29}, we introduce
the following improved scaling Morawetz energy estimate for the
perturbed wave equation in $\mathbb R^2$.
\begin{lem}\label{lem6}
Assume that the perturbation terms $h^{\alpha\beta}$ satisfy conditions \eqref{Y38} and \eqref{Y39}. Let $\sigma(t,r)=t-r$ and $A(\sigma)=(\ln|\sigma|)^{-1}$. 
For any $t\geq4$ and any smooth function $v(t,\cdot)$ defined on $\mathbb R^2$ with $supp\{v(t,\cdot)\}\subseteq\{x:|x|\leq t-2\}\cap\mathbb B_1^c$, we have
\begin{align*}
&\frac12\frac{d}{dt}\int_{\mathbb R^2}e^{A(\sigma)}\Big(|t-r||\partial v|^2
+r\Big(v_t+v_r+\frac v{2r}\Big)^2+\frac{|\Omega v|^2}r+\frac{v^2}{4r}
\nonumber\\&\qquad\qquad\qquad+\frac{v^2}{|t-r|(\ln|t-r|)^2}
+h^{0\beta}v_{\beta}(2Sv+v)-th^{\alpha\beta}v_{\alpha}v_{\beta}\Big)dx
\nonumber\\&\quad+\frac12\int_{\mathbb R^2}\frac{e^{A(\sigma)}}{|t-r|(\ln|t-r|)^2}\Big((t+r)(v_t+v_r)^2
+|t-r|\Big|\frac{\Omega}rv\Big|^2\Big)dx
\end{align*}
\begin{align}\label{Y2}
&\leq\int_{\mathbb R^2}\Box_h v
\big(e^{A(\sigma)}(S+1/2)v\big)dx
+C\int_{\mathbb R^2}|\partial h||\partial v|\big(|Sv|+|v|\big)dx
\nonumber\\&\quad+C\int_{\mathbb R^2}|Sh||\partial v|^2dx+C\int_{\mathbb R^2}(\ln|t-r|)^{-2}|h||\partial v|^2dx
\nonumber\\&\quad+C\int_{\mathbb R^2}|t-r|^{-1}(\ln|t-r|)^{-2}|h||\partial v|(|Sv|+|v|)dx
\nonumber\\&\quad+C\|r^{-1/2}|t-r|^{1/2}\partial v(t,\cdot)\|^2_{L^2(\mathbb R^2)}.
\end{align}
\end{lem}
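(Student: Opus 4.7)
The strategy mirrors Lemma~\ref{lem5} but with the multiplier $e^{A(\sigma)}(S+1/2)v$ in place of $(S+1/2)u$. The plan is to start from the pointwise identity \eqref{Y37} (with $v$ replacing $u$), multiply through by $e^{A(\sigma)}$, and use the Leibniz rule to commute the weight past the $\partial_t$ and $\nabla\cdot$ derivatives, namely $e^{A}\partial_t E=\partial_t(e^A E)-(\partial_t e^A)E$ and $e^A\nabla\cdot F=\nabla\cdot(e^A F)-\nabla(e^A)\cdot F$. Since $\operatorname{supp}v(t,\cdot)\subseteq\{1\le r\le t-2\}$ forces $t-r\ge 2$, the weight $e^{A(\sigma)}$ is smooth with values in $[1,e^{1/\ln 2}]$ on the support, so no boundary contribution arises (neither from $\partial\mathcal{K}$, which is absent in this lemma, nor from the origin or infinity).

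The decisive ingredients are the derivatives of the weight,
\[
\partial_t e^{A(\sigma)}=-\frac{e^{A(\sigma)}}{(t-r)(\ln|t-r|)^2},\qquad \nabla e^{A(\sigma)}=\frac{x\,e^{A(\sigma)}}{r(t-r)(\ln|t-r|)^2},
\]
which in particular annihilate the good derivative, $(\partial_t+\partial_r)e^{A(\sigma)}=0$. After applying the Leibniz identity and integrating over $\mathbb{R}^2$, the principal correction produced by the weight is $\tfrac12\int e^A[E-(x/r)\cdot F]/[(t-r)(\ln|t-r|)^2]\,dx$, where $E=tv_t^2+t|\nabla v|^2+2rv_tv_r+vv_t$ and $F$ is the flux vector on the right of \eqref{Y7}. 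Expanding via $v_t^2+v_r^2=\tfrac12[(v_t+v_r)^2+(v_t-v_r)^2]$ and $2v_tv_r=\tfrac12[(v_t+v_r)^2-(v_t-v_r)^2]$, the quadratic part of $E-(x/r)\cdot F$ reorganises into a positive multiple of $(t+r)(v_t+v_r)^2$ together with a non-negative $(t-r)(v_t-v_r)^2$ remainder and an angular contribution of order $(t-r)|\Omega v/r|^2$; dropping the non-negative $(v_t-v_r)^2$ piece yields the desired dissipative term on the LHS of \eqref{Y2}.

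The scalar $v\partial v$ cross terms appearing in $E-(x/r)\cdot F$ are recycled by a second, radial weighted integration by parts: writing $vv_r=\tfrac12(v^2)_r$ and integrating by parts against $e^A r$ reproduces the $v^2/(4r)$ piece of Lemma~\ref{lem5} and in addition releases the new energy contribution $\int e^A v^2/[|t-r|(\ln|t-r|)^2]\,dx$ from the $r$-derivative of $e^A$; this is exactly the fifth term inside the weighted energy on the LHS of \eqref{Y2}. The $h^{\alpha\beta}$-divergences in \eqref{Y37} are processed in the same manner, with each factor $1/[(t-r)(\ln|t-r|)^2]$ released by the weight producing precisely the weighted-$h$ error integrals on the RHS of \eqref{Y2}. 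All remaining cross terms that do not cleanly fit into either the energy or the dissipation are bounded by Cauchy--Schwarz against $\|r^{-1/2}|t-r|^{1/2}\partial v\|_{L^2(\mathbb{R}^2)}^2$. The main technical obstacle is the bookkeeping in this reorganisation step: one must simultaneously (i) isolate $(t+r)(v_t+v_r)^2$ with the correct sign and coefficient, (ii) route the $vv_t,vv_r$ remainders into the new energy term via radial IBP without spoiling the dissipation, and (iii) ensure every unmatched piece is already of the form controllable by the lone allowed error $\|r^{-1/2}|t-r|^{1/2}\partial v\|_{L^2}^2$, rather than leaking into a stronger norm.
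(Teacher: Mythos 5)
Your strategy is the paper's: start from identity \eqref{Y37} with $v$ in place of $u$, insert the weight $e^{A(\sigma)}$ via Leibniz, use that $(\partial_t+\partial_r)e^{A(\sigma)}=0$ so that only the $A'(\sigma)=-|\sigma|^{-1}(\ln|\sigma|)^{-2}$ corrections survive, and observe that the support condition $1\le r\le t-2$ kills all boundary terms. Your weight derivatives are correct, and the reorganisation of the $A'$-corrections is right in substance; one small point is that the combination you call $E-(x/r)\cdot F$ collapses \emph{exactly} to $(t+r)(v_t+v_r)^2+|t-r|\,|\Omega v/r|^2+v(v_t+v_r)$ plus the $h$-terms — there is no leftover $(t-r)(v_t-v_r)^2$ piece to drop; the cancellation is clean, which is precisely why only the good derivative appears in the dissipation. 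The treatment of $v(v_t+v_r)$ by a further integration by parts to produce the new energy block $\int e^{A}v^2/[|t-r|(\ln|t-r|)^2]\,dx$ also matches the paper.

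The one step where your write-up, as stated, would fail is the disposal of the zeroth-order leftover. After the radial integration by parts of $\int \frac{e^{A}}{|t-r|(\ln|t-r|)^2}\,v(v_t+v_r)\,dx$ you are left with $\frac12\int_{\mathbb R^2}\frac{e^{A}v^2}{r|t-r|(\ln|t-r|)^2}\,dx$, which contains \emph{no} derivative of $v$, so it cannot be "bounded by Cauchy--Schwarz against $\|r^{-1/2}|t-r|^{1/2}\partial v\|_{L^2}^2$": Cauchy--Schwarz never converts $v^2$ into $|\partial v|^2$. The missing ingredient is a one-dimensional weighted Hardy inequality: writing
\begin{align*}
\int_0^{\infty}\frac{v^2}{|t-r|(\ln|t-r|)^2}\,dr=\int_0^{t-2}v^2\,d\frac1{\ln|t-r|}
\leq 2\Big(\int_0^{\infty}\frac{v^2}{|t-r|(\ln|t-r|)^2}\,dr\Big)^{1/2}\Big(\int_0^{\infty}|t-r|\,v_r^2\,dr\Big)^{1/2},
\end{align*}
and absorbing, one gets $\int_0^\infty \frac{v^2}{|t-r|(\ln|t-r|)^2}\,dr\le 4\int_0^\infty|t-r|v_r^2\,dr$ for each fixed angle, whence $\int_{\mathbb R^2}\frac{e^{A}v^2}{r|t-r|(\ln|t-r|)^2}\,dx\le C\|r^{-1/2}|t-r|^{1/2}\partial_r v\|_{L^2(\mathbb R^2)}^2$, which is the last error term in \eqref{Y2}. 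With this Hardy step supplied, your argument closes and coincides with the paper's proof.
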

\begin{proof}
By equality \eqref{Y37} and the decomposition $|\nabla
v|=|\partial_rv|^2+|\frac{\Omega }rv|^2$, we have
\begin{align}\label{Y8}
&\big(\Box_h
v\big)\big(e^{A(\sigma)}(S+1/2)v\big)
\nonumber\\&=\frac12\partial_t\Big(e^{A(\sigma)}\big(tv_t^2+t|\nabla
v|^2+2rv_tv_r+vv_t+h^{0\beta}v_{\beta}(2Sv+v)-th^{\alpha\beta}v_{\alpha}v_{\beta}\big)\Big)
\nonumber\\&\quad-\frac12e^{A(\sigma)}A^{'}(\sigma)\Big(tv_t^2+t|\nabla
v|^2+2rv_tv_r+vv_t+h^{0\beta}v_{\beta}(2Sv+v)-th^{\alpha\beta}v_{\alpha}v_{\beta}\Big)
\nonumber\\&\quad+\frac12\nabla\cdot\big(e^{A(\sigma)}(x|\nabla
v|^2-xv_t^2)\big)+\frac12e^{A(\sigma)}A^{'}(\sigma)(r|\nabla
v|^2-rv_t^2)
\nonumber\\&\quad-\nabla\cdot(e^{A(\sigma)}tv_t\nabla
v)-e^{A(\sigma)}A^{'}(\sigma)tv_tv_r
-\nabla\cdot(e^{A(\sigma)}\nabla
vx\cdot\nabla v)-e^{A(\sigma)}A^{'}(\sigma)rv_r^2
\nonumber\\&\quad-\frac12\nabla\cdot(e^{A(\sigma)}v\nabla
v)-\frac12e^{A(\sigma)}A^{'}(\sigma)vv_r+
\partial_i\big(e^{A(\sigma)}h^{i\beta}(Sv+v/2)v_{\beta}\big)\nonumber\\&\quad
+e^{A(\sigma)}A^{'}(\sigma)\frac{h^{i\beta}x_iv_{\beta}}r(Sv+v/2)
-\frac12\nabla\cdot\big(e^{A(\sigma)}xh^{\alpha\beta}v_{\alpha}v_{\beta}\big)
\nonumber\\&\quad-\frac12e^{A(\sigma)}A^{'}(\sigma)rh^{\alpha\beta}v_{\alpha}v_{\beta}
-e^{A(\sigma)}\big(\partial_{\alpha}h^{\alpha\beta}\big)\big(Sv\big)v_{\beta}
\nonumber\\&\quad+\frac12e^{A(\sigma)}(Sh^{\alpha\beta})v_{\alpha}v_{\beta}
-\frac12e^{A(\sigma)}\big(\partial_{\alpha}h^{\alpha\beta}\big)vv_{\beta}
\nonumber\\&=\frac12\partial_t\Big(e^{A(\sigma)}\big(tv_t^2+t|\nabla
v|^2+2rv_tv_r+vv_t+h^{0\beta}v_{\beta}(2Sv+v)-th^{\alpha\beta}v_{\alpha}v_{\beta}\big)\Big)
\nonumber\\&\quad
-\frac12e^{A(\sigma)}A^{'}(\sigma)\Big((t+r)(v_t+v_r)^2+|t-r||\frac{\Omega}rv|^2
+v(v_t+v_r)\nonumber\\&\qquad+\big(h^{0\beta}v_{\beta}
-(h^{i\beta}x_iv_{\beta})/r\big)(2Sv+v)-(t-r)h^{\alpha\beta}v_{\alpha}v_{\beta}\Big)
\nonumber\\&\quad+\frac12\nabla\cdot\big(e^{A(\sigma)}(x|\nabla
v|^2-xv_t^2)\big)-\nabla\cdot(e^{A(\sigma)}tv_t\nabla v)
\nonumber\\&\quad-\nabla\cdot(e^{A(\sigma)}\nabla vx\cdot\nabla
v)-\frac12\nabla\cdot(e^{A(\sigma)}v\nabla
v)\nonumber\\&\quad+\partial_i\big(e^{A(\sigma)}h^{i\beta}(Sv+v/2)v_{\beta}\big)
-\frac12\nabla\cdot\big(e^{A(\sigma)}xh^{\alpha\beta}v_{\alpha}v_{\beta}\big)
\nonumber\\&\quad-e^{A(\sigma)}\big(\partial_{\alpha}h^{\alpha\beta}\big)\big(Sv\big)v_{\beta}
+\frac12e^{A(\sigma)}(Sh^{\alpha\beta})v_{\alpha}v_{\beta}
\nonumber\\&\quad-\frac12e^{A(\sigma)}\big(\partial_{\alpha}h^{\alpha\beta}\big)vv_{\beta}.
\end{align}
For any $\sigma>0$, we have
$A^{'}(\sigma)=-|\sigma|^{-1}(\ln|\sigma|)^{-2}$. By integrating
equality \eqref{Y8} with respect to $x$ on the domain $\mathbb R^2$,
we obtain
\begin{align}\label{Y9}
&\frac12\frac{d}{dt}\int_{\mathbb
R^2}e^{A(\sigma)}\Big(tv_t^2+t|\nabla
v|^2+2rv_tv_r+vv_t+h^{0\beta}v_{\beta}(2Sv+v)-th^{\alpha\beta}v_{\alpha}v_{\beta}\Big)dx
\nonumber\\&+\frac12\int_{\mathbb
R^2}\frac{e^{A(\sigma)}}{|t-r|(\ln|t-r|)^2}
\Big((t+r)(v_t+v_r)^2+|t-r|\Big|\frac{\Omega}rv\Big|^2+v(v_t+v_r)\nonumber\\
&\qquad\qquad+\big(h^{0\beta}v_{\beta}
-(h^{i\beta}x_iv_{\beta})/r\big)(2Sv+v)-(t-r)h^{\alpha\beta}v_{\alpha}v_{\beta}\Big)dx
\nonumber\\&\leq\int_{\mathbb R^2}\big(\Box_h v\big)
\big(e^{A(\sigma)}(S+1/2)v\big)dx+C\int_{\mathbb R^2}|Sh||\partial
v|^2dx \nonumber\\&\quad+C\int_{\mathbb R^2}|\partial h||\partial
v|\big(|Sv|+|v|\big)dx.
\end{align}
Since
\begin{align*}
&\int_{\mathbb R^2}\frac{e^{A(\sigma)}}{|t-r|(\ln|t-r|)^2}v(v_t+v_r)dx
\\&=\frac12\frac{d}{dt}\int_{\mathbb R^2}\frac{e^{A(\sigma)}}{|t-r|(\ln|t-r|)^2}v^2dx
-\frac12\int_{\mathbb R^2}\frac{e^{A(\sigma)}}{|t-r|(\ln|t-r|)^2}\frac{v^2}rdx,
\end{align*}
by inequality \eqref{Y9} and using similar argument to equalities
\eqref{Y11}, \eqref{Y27}, to complete the proof of this lemma, it
suffices to estimate 
\[
\int_{\mathbb
R^2}\frac{e^{A(\sigma)}}{|t-r|(\ln|t-r|)^2}\frac{v^2}rdx.
\]
 A
straight calculation shows that
\begin{align*}
\int_0^{\infty}\frac{v^2}{|t-r|(\ln|t-r|)^2}dr&=\int_0^{t-2}v^2d\frac1{\ln|t-r|}
\leq \int_0^{\infty}\frac{2|v||v_r|}{\ln|t-r|}dr
\\&\leq 2\Big(\int_0^{\infty}\frac{v^2}{|t-r|(\ln|t-r|)^2}dr\Big)^{1/2}\Big(\int_0^{\infty}|t-r|v_r^2dr\Big)^{1/2},
\end{align*}
which implies
\begin{align*}
\int_{\mathbb R^2}\frac{e^{A(\sigma)}v^2}{r|t-r|(\ln|t-r|)^2}dx
\leq C\int_{\mathbb R^2}r^{-1}|t-r|v_r^2dx.
\end{align*}
This completes the proof of Lemma \ref{lem6}.
\end{proof}
To assure the positivity of the scaling Morawetz energy and the
improved scaling Morawetz energy for the perturbed wave equation, we
assume the perturbation terms $h^{\alpha\beta}$ satisfy the
following two inequalities 
\begin{align}\label{Y41}
&\frac12\Big(|t-r||\partial u|^2+r\Big(u_t+u_r+\frac
u{2r}\Big)^2+\frac{|\Omega
u|^2}r+\frac {u^2}{4r}\Big)\nonumber\\&\leq |t-r||\partial u|^2+r\Big(u_t+u_r+\frac
u{2r}\Big)^2+\frac{|\Omega
u|^2}r+\frac {u^2}{4r}\nonumber\\&\quad\ +h^{0\beta}(2Su+u)u_{\beta}-th^{\alpha\beta}u_{\alpha}u_{\beta}
\nonumber\\&\leq2\Big(|t-r||\partial u|^2+r\Big(u_t+u_r+\frac
u{2r}\Big)^2+\frac{|\Omega
u|^2}r+\frac {u^2}{4r}\Big),
\end{align}
and
\begin{align}\label{Y47}
&\frac12\Big(|t-r||\partial v|^2+r\Big(v_t+v_r+\frac
v{2r}\Big)^2+\frac{|\Omega v|^2}r+\frac
{v^2}{4r}+\frac{v^2}{|t-r|(\ln|t-r|)^2}\Big) \nonumber\\&\leq
e^{A(\sigma)}\Big(|t-r||\partial v|^2+r\Big(v_t+v_r+\frac
v{2r}\Big)^2+\frac{|\Omega v|^2}r+\frac
{v^2}{4r}\nonumber\\&\qquad\qquad
+\frac{v^2}{|t-r|(\ln|t-r|)^2}+h^{0\beta}(2Sv+v)v_{\beta}-th^{\alpha\beta}v_{\alpha}v_{\beta}\Big)
\nonumber\\&\leq \widetilde{C}\Big(|t-r||\partial
v|^2+r\Big(v_t+v_r+\frac v{2r}\Big)^2+\frac{|\Omega v|^2}r+\frac
{v^2}{4r}+\frac{v^2}{|t-r|(\ln|t-r|)^2}\Big),
\end{align}
for any $|x|\leq t-2$ and any functions $u$,$v$, where
$\widetilde{C}=2\exp{(ln2)^{-1}}$.
\par Since $\partial_t$ preserves the null boundary condition, we may obtain the
higher-order scaling Morawetz energy estimates involving the
temporal derivative $\partial_t$.
\begin{lem}\label{lem7}
Let $t\geq4$ and $k\in\mathbb{N}^{+}$. Assume the perturbation terms
$h^{\alpha\beta}$ satisfy the conditions \eqref{Y38}, \eqref{Y39}
and \eqref{Y41}, \eqref{Y47}. For any local
smooth solution $u(t,\cdot)$ to system \eqref{Y56} with the initial
data satisfying condition \eqref{Y24}, where
$supp\{u(t,\cdot)\}\subseteq\{x:x\leq t-2\}$, we then have
\begin{align}\label{Y13}
&\sum_{j\leq k-1}\||t-r|^{1/2}\partial\partial_t^j
u(t,\cdot)\|^2_{L^2(\mathbb R^2\backslash\mathcal
{K})}\nonumber\\&\leq C\varepsilon^2+C\sum_{j\leq
k-1}\int_4^t\int_{\mathbb R^2\backslash\mathcal
{K}}|\Box_h\partial_s^ju(s,x)|\big(|S\partial_s^ju(s,x)|+|\partial_s^ju(s,x)|\big)dxds
\nonumber\\&\quad+C\sum_{j\leq k-1}\int_4^t\int_{\mathbb
R^2\backslash\mathcal {K}}|\partial
h||\partial\partial_s^ju(s,x)|(|S\partial_s^ju(s,x)|+|\partial_s^ju(s,x)|)dxds
\nonumber\\&\quad+C\sum_{j\leq k-1}\int_4^t\int_{\mathbb
R^2\backslash\mathcal {K}}|Sh||\partial\partial_s^ju(s,x)|dxds.
\end{align}
\end{lem}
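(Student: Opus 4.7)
The proof plan is to apply Lemma \ref{lem5} to $v = \partial_t^j u$ for each $0 \le j \le k-1$, integrate in time from $t_0 = 4$, and sum over $j$. Two prerequisites must be verified first: that $\partial_t^j u|_{\partial\mathcal{K}} = 0$, which holds because $\partial_t$ commutes with restriction to the time-independent boundary $\partial\mathcal{K}$; and that the support property $\operatorname{supp}\{\partial_t^j u(t,\cdot)\}\subseteq\{|x|\le t-2\}$ is preserved, which is immediate from the hypothesis on $u$. These are precisely the sense in which $\partial_t$ ``preserves the boundary condition'' as indicated in the flow chart.

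Once applied to $v = \partial_t^j u$, inequality \eqref{Y29} produces a differential inequality
\begin{align*}
\tfrac12 \frac{d}{dt} E_j(t) &\le \int_{\R^2 \setminus \mathcal{K}} \Box_h \partial_t^j u \cdot (S + \tfrac12)\partial_t^j u \, dx + C\int_{\R^2 \setminus \mathcal{K}} |Sh||\partial \partial_t^j u|^2\, dx \\
&\quad + C\int_{\R^2 \setminus \mathcal{K}} |\partial h||\partial \partial_t^j u|\bigl(|S\partial_t^j u|+|\partial_t^j u|\bigr) dx,
\end{align*}
where $E_j(t)$ denotes the left-hand integrand of \eqref{Y29} with $u$ replaced by $\partial_t^j u$. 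The positivity assumption \eqref{Y41}, applied to $\partial_t^j u$, provides the coercive lower bound $E_j(t) \ge \tfrac12 \||t-r|^{1/2}\partial \partial_t^j u(t,\cdot)\|_{L^2(\R^2 \setminus \mathcal{K})}^2$, while the trivial bound $|(S+\tfrac12)\partial_t^j u| \le |S \partial_t^j u| + |\partial_t^j u|$ rewrites the source term in the form appearing in \eqref{Y13}. Integrating from $4$ to $t$ and summing over $j \le k-1$ reduces the entire lemma to proving $\sum_{j\le k-1} E_j(4) \le C\varepsilon^2$.

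This last step is the main technical obstacle. At $t_0 = 4$, Remark \ref{rem1} gives $\operatorname{supp} u(4,\cdot) \subseteq \mathbb{B}_2$, so the weight $|t-r|^{1/2}$ is uniformly bounded on the relevant set, and $|Su(4,\cdot)| \le 4|u_t(4,\cdot)| + 2|u_r(4,\cdot)| \le C|\partial u(4,\cdot)|$. The compatibility condition to infinite order together with iterative use of the equation $\partial_t^2 u = \Delta u - h^{\alpha\beta}\partial_\alpha \partial_\beta u + F$ allows one to express $\partial_t^j u(4,\cdot)$ as $\psi_j(J_j f, J_{j-1} g)$ plus corrections at least quadratic in $\varepsilon$. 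Combined with \eqref{Y24}, smallness of $\varepsilon$, and an induction on $j$, this yields $\|\partial \partial_t^\ell u(4,\cdot)\|_{L^2(\R^2 \setminus \mathcal{K})} \le C\varepsilon$ for every $\ell \le k-1$, whence each contribution to $E_j(4)$ is bounded by $C\varepsilon^2$ and summation in $j$ completes the proof.
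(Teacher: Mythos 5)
Your proposal is correct and follows exactly the paper's route: the paper's own proof is the single sentence ``replace $u$ by $\partial_t^j u$ in inequality \eqref{Y29} and integrate in $t$ over $[4,t]$,'' and you have simply made explicit the details it leaves implicit (preservation of the Dirichlet condition and support under $\partial_t$, coercivity via \eqref{Y41}, and the $C\varepsilon^2$ bound on the initial energy through the compatibility conditions). No gaps.
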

\begin{proof}
Replacing $u$ by $\partial_t^ju$ in inequality \eqref{Y29} and
integrating the resulted inequality with respect to $t$ on $[4,t]$, then
 Lemma \ref{lem7} holds.
\end{proof}
Next let us come to the higher-order scaling Morawetz energy estimates
involving the temporal and spatial derivatives. Precisely we obtain
the following lemma.
\begin{lem}\label{lem8}
Suppose $k\geq2$ and $t\geq4$. For any smooth function $u(t,\cdot)$
defined on $\mathbb R^2\backslash\mathcal {K}$ with compacted
support and $u|_{\partial\mathcal {K}}=0$, we have
\begin{align}\label{Y23}
&\sum_{|a|\leq k-1}\|\langle
t-r\rangle^{1/2}\partial\partial^au\|_{L^2(\mathbb
R^2\backslash\mathcal {K})}\nonumber\\&\leq C\sum_{|a|\leq
k-2}\|\langle t-r\rangle^{1/2}\partial^a\Box u\|_{L^2(\mathbb
R^2\backslash\mathcal {K})}\nonumber\\&\quad+C\sum_{j\leq
k-1}\|\langle t-r\rangle^{1/2}\partial
\partial_t^ju\|_{L^2(\mathbb R^2\backslash\mathcal {K})}.
\end{align}
\end{lem}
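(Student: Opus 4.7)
The strategy is induction on the number $\ell$ of spatial derivatives appearing in the multi-index $a$, trading pairs of spatial derivatives for pairs of temporal derivatives and a $\Box u$ forcing via the identity $\Delta = \partial_t^2 - \Box$. The reason this works in the exterior setting, despite the fact that general derivatives of $u$ need not vanish on $\partial\mathcal{K}$, is the observation highlighted in the flow chart in Section~1: $\partial_t$ preserves the homogeneous Dirichlet condition, so $\partial_t^j u|_{\partial\mathcal{K}} = 0$ for every $j \geq 0$, which makes the standard Dirichlet elliptic estimates applicable to $v = \partial_t^j u$.

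The first step is to establish a weighted higher-order Dirichlet elliptic estimate: for any smooth compactly supported $v$ on $\mathbb R^2\backslash\mathcal{K}$ with $v|_{\partial\mathcal{K}}=0$ and any integer $m\geq2$,
\[
\sum_{|\mu|\leq m}\|\langle t-r\rangle^{1/2}\nabla_x^{\mu} v\|_{L^2(\mathbb R^2\backslash\mathcal{K})}
\leq
C\sum_{|\mu|\leq m-2}\|\langle t-r\rangle^{1/2}\nabla_x^{\mu}\Delta v\|_{L^2(\mathbb R^2\backslash\mathcal{K})}
 +C\|\langle t-r\rangle^{1/2}\nabla v\|_{L^2(\mathbb R^2\backslash\mathcal{K})}.
\]
This follows from the classical $H^m$ regularity for the Dirichlet Laplacian on the exterior of a smooth obstacle, combined with commutation of the weight $\langle t-r\rangle^{1/2}$ through $\Delta$ and through $\nabla_x^{\mu}$. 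Since $|\nabla\langle t-r\rangle^{1/2}|$ and its higher derivatives are uniformly bounded, the commutator errors are of lower order in spatial derivatives and can be absorbed.

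Next, I induct on $\ell$. Writing $a=(a_0,\alpha)$ with $a_0$ temporal and $|\alpha|=\ell$ spatial entries, the base case $\ell=0$ yields $\|\langle t-r\rangle^{1/2}\partial\partial_t^{a_0}u\|_{L^2}$, which already appears on the right-hand side (as the $j=a_0\leq k-1$ summand). For $\ell\geq 2$, apply the weighted elliptic estimate with $v=\partial_t^{a_0}u$ (which has zero Dirichlet data) and $m=\ell+1$: this bounds $\nabla_x^{\ell+1}\partial_t^{a_0}u$ by a sum of $\nabla_x^{\mu}\Delta\partial_t^{a_0}u$ with $|\mu|\leq \ell-1$, plus lower-order terms. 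Using $\Delta\partial_t^{a_0}u=\partial_t^{a_0+2}u-\partial_t^{a_0}\Box u$ converts these into (i) $\|\langle t-r\rangle^{1/2}\nabla_x^{\mu}\partial_t^{a_0}\Box u\|_{L^2}$ with $|\mu|+a_0\leq k-2$, matching the first right-hand side of the lemma, and (ii) $\|\langle t-r\rangle^{1/2}\nabla_x^{\mu}\partial_t^{a_0+2}u\|_{L^2}$ with spatial count $|\mu|\leq\ell-1$, controlled by the induction hypothesis. The remaining case $\ell=1$ is handled by direct integration by parts:
\[
\|\langle t-r\rangle^{1/2}\partial_i\partial_t^{a_0}u\|_{L^2}^2
=-\int_{\mathbb R^2\backslash\mathcal{K}}\langle t-r\rangle\,\partial_t^{a_0}u\cdot\Delta\partial_t^{a_0}u\,dx+\text{(commutator from $\nabla\langle t-r\rangle$)},
\]
and the right-hand side is then controlled by Cauchy--Schwarz together with $\Delta=\partial_t^2-\Box$.

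The main technical obstacle is propagating the weight $\langle t-r\rangle^{1/2}$ cleanly through the elliptic estimates and integrations by parts: each commutation with $\Delta$ or $\nabla_x^{\mu}$ produces correction terms involving derivatives of $\langle t-r\rangle^{1/2}$, which must be verified to yield only strictly lower spatial-derivative order, so that they either feed back into the induction or contribute to the $\partial\partial_t^ju$ terms already on the right-hand side. A secondary technical point is that the elliptic estimate is used on the unbounded domain $\mathbb R^2\backslash\mathcal{K}$, but the compact-support assumption on $u(t,\cdot)$ localizes the analysis (via a cutoff away from the obstacle) to a bounded region where the classical regularity is available, so no issue arises at infinity.
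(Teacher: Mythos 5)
Your overall strategy coincides with the paper's: since $\partial_t$ preserves the Dirichlet condition, reduce everything to elliptic regularity for $\partial_t^ju$, convert $\Delta=\partial_t^2-\Box$, and induct on the number of spatial derivatives. The divergence occurs exactly where the real work is, and your version of that step has a gap. Your claimed weighted elliptic estimate on $\mathbb R^2\setminus\mathcal K$, whose right-hand side consists only of $\|\langle t-r\rangle^{1/2}\nabla^\mu\Delta v\|_{L^2}$ and $\|\langle t-r\rangle^{1/2}\nabla v\|_{L^2}$, does not follow by ``commuting the weight through'' the classical Dirichlet estimate: conjugating by $w=\langle t-r\rangle^{1/2}$ (i.e.\ applying the unweighted estimate to $wv$) produces the zeroth-order commutator $v\,\Delta w$, hence a term of size $\|\langle t-r\rangle^{-1/2}v\|_{L^2}$ --- a weighted $L^2$ norm of $v=\partial_t^ju$ carrying \emph{no} derivative. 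For $j\geq1$ this can be dominated by $\|\partial\partial_t^{j-1}u\|_{L^2}$, but for $j=0$ (purely spatial multi-indices $a$, which are in the sum) it is a norm of $u$ itself, which does not appear on the right-hand side of the lemma and cannot be recovered from it with constants uniform in $t$: Poincar\'e costs a factor of the support radius, which is $O(t)$, and the radial Hardy inequality near the light cone in two dimensions carries logarithmic losses (compare the proof of Lemma \ref{lem6}, where precisely such a term forces the extra $(\ln|t-r|)^{-2}$ weight). The same defect appears in your $\ell=1$ integration by parts, where $\int\langle t-r\rangle\,\partial_t^{a_0}u\cdot\Delta\partial_t^{a_0}u\,dx$ requires $\|\langle t-r\rangle^{1/2}\partial_t^{a_0}u\|_{L^2}$ after Cauchy--Schwarz; that case is in any event unnecessary, since $\nabla\partial_t^{a_0}u$ is already of the form $\partial\partial_t^ju$ and sits on the right-hand side verbatim.

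The paper's proof is arranged precisely to avoid this zeroth-order commutator. It splits into the region $|x|\leq2$, where $\langle t-r\rangle^{1/2}\sim\langle t\rangle^{1/2}$ is comparable to a constant that factors out of the standard unweighted Dirichlet elliptic estimate (so no commutators with the weight arise at all), and the region $|x|\geq2$, where a cutoff removes the boundary and the weighted second-derivative inequality \eqref{Y16} is proved by direct integration by parts on all of $\mathbb R^2$; there the only commutators involve $\nabla\langle t-r\rangle$ paired with $\nabla^2f\cdot\nabla f$, i.e.\ first-order quantities bounded by $\|\nabla f\|_{L^2}^2$ after absorbing a small multiple of $\|\langle t-r\rangle^{1/2}\nabla^2f\|_{L^2}^2$. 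Your argument can be repaired either by adopting this two-region split, or by localizing the classical elliptic estimate to unit balls (on which $\langle t-r\rangle$ varies by a bounded factor and the zeroth-order term can be removed by subtracting means) and summing; but some such device is needed, and the one-line commutation as stated does not close.
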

\begin{proof}
\par Let $\alpha(x)$ be a smooth cut-off function with $\alpha(x)=1$ for
$|x|\leq2$ and $\alpha(x)=0$ for $|x|\geq5/2$. Let $\tilde{u}=\alpha
u$. Since $\Delta \tilde{u}=\alpha(\Box
u-\partial_t^2u)+2\nabla\alpha\cdot\nabla u+u\Delta\alpha$ and
$\partial_t$ preserves the null boundary condition, by the standard
elliptic regularity estimate and Poincar\'{e} inequality, for any
$k\geq2$, $|b|+j\leq k$ and $|b|\geq2$, we obtain
\begin{align}\label{Y18}
&\|\nabla^b
\partial_t^ju\|_{L^2(|x|\leq 2)}
\nonumber\\\leq&\|\nabla^b
\partial_t^j\tilde{u}\|_{L^2(|x|\leq 5/2)}\nonumber\\
\leq&
C\|\Delta\partial_t^j \tilde{u}\|_{H^{|b|-2}(|x|\leq
5/2)}+C\sum_{j\leq k-1}\|\partial
\partial_t^ju\|_{L^2(|x|\leq 5/2)}\nonumber\\
\leq &C\sum_{|a|\leq
k-2}\|\partial^a\Box u\|_{L^2(|x|\leq 5/2)}+C\sum_{j\leq
k-1}\|\partial\partial_t^ju\|_{L^2(|x|\leq 5/2)} \nonumber\\
&+C\sum_{\substack{|c|+j\leq k\\|c|\leq
|b|-1}}\|\nabla^c\partial_t^ju\|_{L^2(|x|\leq 5/2)}.
\end{align}
The last term on the right-hand side of inequality \eqref{Y18} can
be dealt with by using some new cut-off functions and an induction
argument. Finally we obtain
\begin{align*}
&\sum_{\substack{|b|+j\leq k\\|b|\geq2}}\|\nabla^b
\partial_t^ju\|_{L^2(|x|\leq 2)}\\&\leq C\sum_{|a|\leq
k-2}\|\partial^a\Box u\|_{L^2(|x|\leq 3)}+C\sum_{j\leq
k-1}\|\partial\partial_t^ju\|_{L^2(|x|\leq 3)},
\end{align*}
which implies
\begin{align}\label{Y15}
&\sum_{|a|\leq k-1}\|\partial\partial^au\|_{L^2(|x|\leq
2)}\nonumber\\&\leq C\sum_{|a|\leq k-2}\|\partial^a\Box
u\|_{L^2(|x|\leq 3)}+C\sum_{j\leq
k-1}\|\partial\partial_t^ju\|_{L^2(|x|\leq 3)}.
\end{align}
Next we estimate $\sum_{|a|\leq k-1}\|\langle
t-r\rangle^{1/2}\partial\partial^au\|_{L^2(|x|\geq2)}$. Let
$\psi(x)$ be a smooth cut-off function with $\psi(x)=1$ for
$|x|\geq2$ and $\psi(x)=0$ for $|x|\leq1$. Let $v=\psi u$. For any
smooth function $f$ defined on $\mathbb R^2$ with sufficient decay
at the infinity, by integration by parts and Young's inequality, we
have
\begin{align}\label{Y16}
&\|\langle t-r\rangle^{1/2}\nabla^2f\|_{L^2(\mathbb R^2)}
\nonumber\\=&\sum_{i,j=1}^2\Big(\int\langle
t-r\rangle\big(\partial_i\partial_jf\big)^2dx\Big)^{1/2}
\nonumber\\\leq& \|\langle t-r\rangle^{1/2}\Delta f\|_{L^2(\mathbb
R^2)}+C\|\nabla f\|_{L^2(\mathbb R^2)}.
\end{align}
By inequalities \eqref{Y15}, \eqref{Y16} and Poincar\'{e} inequality, we
have
\begin{align}\label{Y12} &\sum_{|a|\leq k-1}\|\langle
t-r\rangle^{1/2}\partial\partial^au\|_{L^2(|x|\geq
2)}\leq\sum_{|a|\leq k-1}\|\langle
t-r\rangle^{1/2}\partial\partial^av\|_{L^2(\mathbb
R^2)}\nonumber\\&\leq C\sum_{\substack{|b|+j\leq
k\\|b|\geq2}}\|\langle t-r\rangle^{1/2}\nabla^b
\partial_t^jv\|_{L^2(\mathbb R^2)}+C\sum_{j\leq k-1}\|\langle t-r\rangle^{1/2}\partial
\partial_t^jv\|_{L^2(\mathbb R^2)}
\nonumber\\&\leq C\sum_{\substack{|b|+j\leq k,|b|\geq2\\|c|\leq
|b|-2}}\|\langle t-r\rangle^{1/2}\nabla^c\partial_t^j\Delta
v\|_{L^2(\mathbb R^2)}+C\sum_{j\leq k-1}\|\langle
t-r\rangle^{1/2}\partial
\partial_t^ju\|_{L^2(\mathbb R^2\backslash\mathcal {K})}\nonumber\\&\quad
+C\sum_{\substack{|b|+j\leq k,|b|\geq2\\|c|\leq |b|-1}}\|\langle
t-r\rangle^{1/2}\nabla^c
\partial_t^jv\|_{L^2(\mathbb R^2)}
\nonumber\\&\leq C\sum_{|a|\leq k-2}\|\langle
t-r\rangle^{1/2}\partial^a\Box u\|_{L^2(\mathbb
R^2\backslash\mathcal {K})}+C\sum_{j\leq k-1}\|\langle
t-r\rangle^{1/2}\partial\partial_t^ju\|_{L^2(\mathbb
R^2\backslash\mathcal {K})},
\end{align}
where we used $\Delta v=\psi(\Box
u-\partial_t^2u)+2\nabla\psi\cdot\nabla u+u\Delta\psi$ and an
induction argument on $|b|$. The combination of estimates
\eqref{Y15} and \eqref{Y12} shows that Lemma \ref{lem8} holds.
\end{proof}
We also need the higher-order energy estimates involving both
$\partial$ and $\Omega$, where the spatial rotation will appear at
most once. To this end, we first derive the standard higher-order energy estimates involving only the temporal derivative $\partial_t$, which can be obtained 
directly since $\partial_t$ preserves the boundary condition.
\begin{lem}\label{lem10}
Let $t\geq4$ and $k\in\mathbb N$.  Assume the perturbation terms
$h^{\alpha\beta}$ satisfy the conditions \eqref{Y38} and \eqref{Y39}.
Then for any local smooth solution $u(t,\cdot)$ to system \eqref{Y56}
with the initial data satisfying condition \eqref{Y24}, where
$supp\{u(t,\cdot)\}\subseteq\{x:x\leq t-2\}$, we have
\begin{align*}
\sum_{j\leq
k+1}\|\partial\partial_t^ju(t,\cdot)\|^2_{L^2(\mathbb
R^2\backslash\mathcal {K})}&\leq
C\varepsilon^2+C\sum_{j\leq k+1}\int_4^t\int_{\mathbb R^2\backslash\mathcal
{K}}|\Box_h
\partial_s^ju||\partial_s\partial_s^ju|dxds
\nonumber\\&\quad\ +C\sum_{j\leq
k+1}\int_4^t\int_{\mathbb R^2\backslash\mathcal
{K}}|\partial h||\partial\partial_s^ju|^2dxds.
\end{align*}
\end{lem}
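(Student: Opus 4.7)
Since the Dirichlet boundary condition $u|_{\partial\mathcal{K}}=0$ is preserved under $\partial_t$, for each $j\le k+1$ the function $v:=\partial_t^j u$ satisfies $v|_{\partial\mathcal{K}}=0$ and in particular $v_t|_{\partial\mathcal{K}}=0$. My plan is to treat $v$ as a stand-alone solution of $\Box_h v=\Box_h\partial_t^j u$, multiply this equation by $v_t$, and integrate by parts; the star-shaped assumption plays no role here because all boundary integrals vanish thanks to $v_t|_{\partial\mathcal{K}}=0$.

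The first step is a perturbed energy identity for $v$. Combining the free identity \eqref{Y3} with
\[
v_t\, h^{\alpha\beta}\partial_\alpha\partial_\beta v
=\tfrac12\partial_t\bigl(2h^{0\beta}v_t v_\beta-h^{\alpha\beta}v_\alpha v_\beta\bigr)
+\partial_i\bigl(h^{i\beta}v_t v_\beta\bigr)
-(\partial_\alpha h^{\alpha\beta})v_t v_\beta
+\tfrac12(\partial_t h^{\alpha\beta})v_\alpha v_\beta,
\]
which follows from one integration by parts in $\partial_\alpha$ together with the symmetry \eqref{Y38}, I obtain
\[
v_t\Box_h v=\tfrac12\partial_t E(v)-\nabla\!\cdot\!G(v)+R(v),
\]
where $E(v)=v_t^2+|\nabla v|^2+2h^{0\beta}v_t v_\beta-h^{\alpha\beta}v_\alpha v_\beta$, the spatial flux is $G(v)=v_t\nabla v-\bigl(h^{i\beta}v_t v_\beta\bigr)_{i=1,2}$, and $|R(v)|\le C|\partial h||\partial v|^2$. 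By the smallness \eqref{Y39}, $E(v)$ is pointwise comparable to $|\partial v|^2$.

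Next, I would integrate this identity over $[4,t]\times(\mathbb R^2\setminus\mathcal{K})$. Every surface integral produced from $\nabla\!\cdot\!G(v)$ carries the factor $v_t|_{\partial\mathcal{K}}$, which is zero, so the divergence theorem contributes nothing on $\partial\mathcal{K}$. This yields
\[
\|\partial v(t,\cdot)\|_{L^2(\mathbb R^2\setminus\mathcal{K})}^2
\le C\|\partial v(4,\cdot)\|_{L^2}^2
+C\!\int_4^t\!\!\int_{\mathbb R^2\setminus\mathcal{K}}\!|\Box_h v||v_s|\,dxds
+C\!\int_4^t\!\!\int_{\mathbb R^2\setminus\mathcal{K}}\!|\partial h||\partial v|^2\,dxds.
\]
Substituting $v=\partial_t^j u$ and summing over $j\le k+1$ gives the stated bound, provided the initial term $\sum_{j\le k+1}\|\partial\partial_t^j u(4,\cdot)\|_{L^2}^2$ is controlled by $C\varepsilon^2$. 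This last fact is extracted from \eqref{Y24} by recursively using the equation \eqref{Y56} at $t=4$ to trade each pair of time derivatives for a Laplacian plus lower-order terms, where the smallness \eqref{Y39} of $h$ allows the $\varepsilon$-bound to propagate through every stage.

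The only real difficulty is bookkeeping---correctly identifying which pieces of the perturbation combine into $E(v)$, which assemble as divergences, and which form the error $R(v)$ bounded by $|\partial h||\partial v|^2$. Conceptually, no geometric input beyond the Dirichlet condition is required, since $\partial_t$ commutes with the spatial domain and respects the boundary.
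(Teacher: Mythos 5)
Your proof is correct and is precisely the argument the paper intends: the paper states Lemma \ref{lem10} without proof, remarking only that it follows directly because $\partial_t$ preserves the boundary condition, and your energy identity is exactly the $\gamma=0$ case of the perturbation computation already carried out in the proof of Lemma \ref{lem5}, with every boundary flux carrying the vanishing factor $v_t|_{\partial\mathcal K}=\partial_t^{j+1}u|_{\partial\mathcal K}=0$. The treatment of the data term at $t=4$ via the compatibility conditions and recursive use of the equation is also the standard step the paper leaves implicit.
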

To establish the standard higher-order energy estimates involving both the
temporal and spatial derivatives, we need the following
elliptic estimates.
\begin{lem}[Elliptic Regularity Estimates]\label{lem11}
Let $\Omega=\mathbb B_{t}\cap(\mathbb R^2\setminus\mathcal {K})$,
where $t$ is a positive constant and $t\geq4$. Consider the
following system
\begin{equation}\label{Y52}
\left \{
\begin{array}{llll}
-\Delta w=f \quad in \quad \Omega,  \\
w=0 \quad on \quad \partial\Omega.
\end{array} \right.
\end{equation}
For any solution $w$ to system \eqref{Y52}, there exists a positive
constant $C_0$ such that
\begin{align}\label{Y54}
\|\nabla^2w\|_{L^2(\Omega)}\leq C_0\big(\|f\|_{L^2(\Omega)}+\|\nabla
w\|_{L^2(\Omega)}\big),
\end{align}
where $C_0$ is a generic constant independent of $t$. Moreover, for any $k\geq2$, there
exists a positive constant $C_k$ independent of $t$ such that
\begin{align}\label{Y55}
\sum_{|a|=2}^k\|\nabla^aw\|_{L^2(\Omega)}\leq
C_k\big(\|f\|_{H^{k-2}(\Omega)}+\|\nabla w\|_{L^2(\Omega)}\big).
\end{align}
\end{lem}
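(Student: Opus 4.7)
\emph{Strategy.} The uniformity in $t$ is the central feature to maintain, and the plan splits $\Omega$ into a piece supported near the obstacle (where the geometry is fixed, since $\mathcal{K}\subseteq\mathbb{B}_1$) and a piece supported far from it (where the relevant domain is the convex ball $\mathbb{B}_t$). On the near piece, standard bounded-domain elliptic regularity applies with a constant depending only on the fixed obstacle. On the far piece, the Miranda--Talenti $L^2$ estimate on the convex ball $\mathbb{B}_t$ has a scale-invariant constant. The star-shaped hypothesis will then absorb all remaining commutator errors into $\|\nabla w\|_{L^2(\Omega)}$.

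\emph{Proof of \eqref{Y54}.} Fix $\psi\in C^\infty(\mathbb{R}^2)$ with $\psi\equiv 0$ on $|x|\leq 1$ and $\psi\equiv 1$ on $|x|\geq 2$, and decompose $w=(1-\psi)w+\psi w=:w_1+w_2$. The piece $w_1$ is supported in the fixed bounded Lipschitz domain $\mathbb{B}_2\cap(\mathbb{R}^2\setminus\mathcal{K})$ with $w_1|_{\partial\mathcal{K}}=0$, so standard second-order elliptic regularity there yields
\[
\|\nabla^2 w_1\|_{L^2}\leq C\bigl(\|\Delta w_1\|_{L^2}+\|\nabla w_1\|_{L^2}\bigr)
\]
with $C$ depending only on the fixed obstacle. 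The piece $w_2$ is supported in $\mathbb{B}_t\setminus\mathbb{B}_1$ and vanishes on $\partial\mathbb{B}_t$, hence, extended by zero, belongs to $H^1_0\cap H^2(\mathbb{B}_t)$. Convexity of $\mathbb{B}_t$ together with the Miranda--Talenti inequality then gives $\|\nabla^2 w_2\|_{L^2(\mathbb{B}_t)}\leq\|\Delta w_2\|_{L^2(\mathbb{B}_t)}$ with constant $1$, independent of $t$ (alternatively by direct rescaling from $\mathbb{B}_1$, since this inequality is scale invariant). Expanding $\Delta w_i$ in $f=-\Delta w$ produces $f$ plus commutator terms $\nabla\psi\cdot\nabla w$ and $w\Delta\psi$ supported on the fixed annulus $\{1\leq|x|\leq 2\}$.

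\emph{Closing the second-order estimate via star-shapedness.} It remains to bound $\|w\|_{L^2(\{1\leq|x|\leq 2\})}$ by $\|\nabla w\|_{L^2(\Omega)}$ with a constant independent of $t$; this is where the star-shaped hypothesis enters. For each $\omega\in\mathbb{S}^1$, the radial ray $\{\rho\omega:\rho>0\}$ meets $\partial\mathcal{K}$ at a unique point $\rho_0(\omega)\omega\subseteq\mathbb{B}_1$, where $w$ vanishes, so writing $w(\rho\omega)=\int_{\rho_0(\omega)}^{\rho}\partial_r w(s\omega)\,ds$ and applying Cauchy--Schwarz followed by polar integration over $\rho\in[1,2]$, $\omega\in\mathbb{S}^1$ gives the required bound with a constant depending only on the (fixed) ray length. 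Combining this with the bounds on $w_1$ and $w_2$ yields \eqref{Y54}.

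\emph{Higher-order estimate \eqref{Y55}.} I would argue by induction on $k$, using \eqref{Y54} as the base case. The inductive step rescales the standard higher-order Dirichlet estimate on $\mathbb{B}_1$ to $\mathbb{B}_t$: a dimensional check shows that for $w_2\in H^1_0(\mathbb{B}_t)$ and each $2\leq j\leq k$ one has $\|\nabla^j w_2\|_{L^2(\mathbb{B}_t)}\leq C_k\|\Delta w_2\|_{H^{k-2}(\mathbb{B}_t)}$ with constant independent of $t\geq 4$ (the top-order scaling matches exactly, while subdominant contributions are bounded by positive powers of $t^{-1}$). Near the obstacle, standard higher-order elliptic regularity on the fixed set $\mathbb{B}_2\cap(\mathbb{R}^2\setminus\mathcal{K})$ handles $w_1$. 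The cut-off commutators now generate derivatives of $w$ up to order $k-1$ supported on the fixed annulus $\{1\leq|x|\leq 2\}$, and these are absorbed by the inductive hypothesis combined with interior regularity on a slightly enlarged (still $t$-independent) neighborhood, giving the target bound $C_k(\|f\|_{H^{k-2}(\Omega)}+\|\nabla w\|_{L^2(\Omega)})$. The main obstacle throughout is verifying $t$-independence of every constant, and this is overcome precisely because the two underlying tools --- scale-invariance of the Miranda--Talenti-type estimates on $\mathbb{B}_t$ and standard elliptic regularity on the fixed neighborhood of $\partial\mathcal{K}$ --- both produce $t$-independent constants by construction, while all commutator errors live in the fixed annulus $\{1\leq|x|\leq 2\}$.
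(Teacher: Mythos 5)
Your proof is correct, but it takes a genuinely different route from the paper's. The paper handles $t\geq 6$ by rescaling the \emph{entire} domain: it sets $\widetilde{w}(x)=\tfrac{36}{t^2}w(tx/6)$ on $\widetilde{\Omega}=\mathbb B_6\cap(\mathbb R^2\setminus \tfrac{6}{t}\mathcal K)$, applies the ``standard'' elliptic estimate there, and scales back; the obstacle thus shrinks to $\tfrac{6}{t}\mathcal K$, and the $t$-uniformity of the constant is silently attributed to the whole family of domains with a vanishing hole. Your cut-off decomposition $w=(1-\psi)w+\psi w$ keeps the obstacle at unit scale (where a single fixed-domain constant suffices) and isolates the $t$-dependence in the convex ball $\mathbb B_t$, where Miranda--Talenti (or equivalently the scale-invariance you note) gives a genuinely dimension-free constant; this makes the $t$-independence --- which is the entire content of the lemma --- more transparent, at the modest price of the commutator terms $\nabla\psi\cdot\nabla w$ and $w\Delta\psi$ on the fixed annulus. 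Two small remarks: (i) your appeal to star-shapedness for bounding $\|w\|_{L^2(\{1\leq|x|\leq2\})}$ is not needed --- since $w$ vanishes on $\partial\mathcal K$, the Friedrichs--Poincar\'e inequality on the fixed domain $\mathbb B_2\cap(\mathbb R^2\setminus\mathcal K)$ already gives $\|w\|_{L^2}\leq C\|\nabla w\|_{L^2}$ with a $t$-independent constant, which is exactly how the paper proceeds; (ii) in the higher-order step, ``interior regularity on a slightly enlarged neighborhood'' of $\{1\leq|x|\leq2\}$ should be replaced by up-to-the-boundary regularity on the fixed set $\mathbb B_3\cap(\mathbb R^2\setminus\mathcal K)$, since $\mathcal K$ may come arbitrarily close to $|x|=1$; with that substitution your induction closes as described, and your scaling check $t^{m+2-j}\leq 1$ for the subcritical terms on $\mathbb B_t$ is exactly right.
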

\begin{proof}
When $t\leq6$, by the standard elliptic regularity estimates, we
have
\begin{align*}
\|w\|_{H^2(\Omega)}\leq
C\big(\|f\|_{L^2(\Omega)}+\|w\|_{L^2(\Omega)}\big).
\end{align*}
By Poincar\'{e} inequality, we have $\|w\|_{L^2(\Omega)}\leq C\|\nabla
w\|_{L^2(\Omega)}$, which implies inequality \eqref{Y54} holds for
any $t\leq6$. When $t\geq6$, let $\widetilde{w}(x)=36/t^2w(tx/6)$
and $\widetilde{\Omega}=\{x:x=6y/t,y\in\Omega\}$. Thus $-\Delta
\widetilde{w}(x)=f(tx/6)$, $\nabla\widetilde{w}(x)=6/t(\nabla
w)(tx/6)$ and $\nabla^2\widetilde{w}(x)=(\nabla^2 w)(tx/6)$. By the
standard elliptic regularity estimates and Poinare inequality,
\begin{align*}
\|\nabla^2\widetilde{w}\|_{L^2(\widetilde{\Omega})}\leq
C\big(\|f(tx/6)\|_{L^2(\widetilde{\Omega})}+\|\nabla
\widetilde{w}\|_{L^2(\widetilde{\Omega})}\big),
\end{align*}
which implies inequality \eqref{Y54} holds by making a change of
variable $tx/6\rightarrow x$.
\par Similarly, for $|a|\geq2$, we obtain
\begin{align*}
\|\nabla^aw\|_{L^2(\Omega)}\leq
C\big(\|f\|_{H^{|a|-2}(\Omega)}+\|\nabla w\|_{L^2(\Omega)}\big),
\end{align*}
which implies inequality \eqref{Y55} holds.
\end{proof}
The combination of Lemma \ref{lem10} and \ref{lem11} implies that the following lemma holds.
\begin{lem}\label{lem12}
Let $t\geq4$ and $k\in\mathbb N$.  Assume the perturbation terms
$h^{\alpha\beta}$ satisfy the conditions \eqref{Y38} and \eqref{Y39}.
Then for any local smooth solution $u(t,\cdot)$ to system \eqref{Y56}
with the initial data satisfying condition \eqref{Y24}, where
$supp\{u(t,\cdot)\}\subseteq\{x:x\leq t-2\}$, we have
\begin{align*}
\sum_{|a|\leq k+1}\|\partial\partial^au(t,\cdot)\|^2_{L^2(\mathbb
R^2\backslash\mathcal {K})}&\leq
C\varepsilon^2+C\sum_{j\leq k+1}\int_4^t\int_{\mathbb R^2\backslash\mathcal
{K}}|\Box_h
\partial_s^ju||\partial_s\partial_s^ju|dxds
\nonumber\\&\quad\ +C\sum_{j\leq k+1}\int_4^t\int_{\mathbb
R^2\backslash\mathcal {K}}|\partial h||\partial\partial_s^ju|^2dxds
\nonumber\\&\quad\ +C\sum_{|a|\leq k}\|\partial^a\Box
u(t,\cdot)\|^2_{L^2(\mathbb R^2\backslash\mathcal {K})}.
\end{align*}
\end{lem}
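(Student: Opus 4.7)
The plan is to combine the pure-temporal energy estimate of Lemma \ref{lem10} with the elliptic regularity of Lemma \ref{lem11}. Lemma \ref{lem10} already controls $\sum_{j\leq k+1}\|\partial\partial_t^j u\|^2_{L^2}$ by exactly the first three terms of the claimed right-hand side, so the only remaining task is to convert this pure-temporal control into full mixed spatial-temporal control at the cost of the $\sum_{|a|\leq k}\|\partial^a\Box u\|^2_{L^2}$ remainder.

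I would proceed by induction on the number of spatial derivatives. For a mixed multi-index $\alpha$ with total order at most $k+2$, write $\partial^\alpha u=\nabla^{n_s}\partial_t^{n_t}u$ (derivatives commute) with $n_s+n_t\leq k+2$. The base cases $n_s\in\{0,1\}$ follow from Lemma \ref{lem10} directly. For $n_s\geq 2$, set $w=\partial_t^{n_t}u$. This function vanishes on $\partial\mathcal{K}$ because $\partial_t$ preserves the homogeneous Dirichlet condition, and it vanishes on $\partial\mathbb{B}_t$ because of the support assumption $\mathrm{supp}\,u(t,\cdot)\subseteq\{|x|\leq t-2\}$; hence $w$ solves an elliptic Dirichlet problem on $\Omega=\mathbb{B}_t\cap(\mathbb{R}^2\setminus\mathcal{K})$ with forcing $f=-\Delta w=\partial_t^{n_t}\Box u-\partial_t^{n_t+2}u$. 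Estimate \eqref{Y55} then bounds $\|\nabla^{n_s}\partial_t^{n_t}u\|_{L^2}$ by $C\|\partial_t^{n_t}\Box u\|_{H^{n_s-2}}+C\|\partial_t^{n_t+2}u\|_{H^{n_s-2}}+C\|\nabla\partial_t^{n_t}u\|_{L^2}$: the first summand is a derivative of $\Box u$ of total order at most $k$ and feeds into the $\sum_{|a|\leq k}\|\partial^a\Box u\|^2_{L^2}$ term, the third is already covered by Lemma \ref{lem10}, and the middle summand is a mixed derivative of $u$ of the same total order but with strictly fewer (at most $n_s-2$) spatial derivatives, which closes the induction.

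The main subtlety is ensuring the Dirichlet boundary condition required by Lemma \ref{lem11} holds at each stage. This works because elliptic regularity is invoked only on pure-temporal derivatives $\partial_t^{n_t}u$, never on spatially differentiated quantities (which would not vanish on $\partial\mathcal{K}$); for the same reason, one cannot estimate $\|\nabla^{n_s}\partial_t^{n_t}u\|$ by shuffling $\nabla$ inside and applying \eqref{Y55} to a lower-order $u$. A secondary bookkeeping concern is that the $t$-independence of the elliptic constants in Lemma \ref{lem11} must be invoked, so that the constant in Lemma \ref{lem12} is $t$-uniform, as required when this estimate is subsequently iterated over $[4,T_\varepsilon]$.
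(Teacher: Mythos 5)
Your proposal is correct and follows essentially the same route as the paper: split off the pure-temporal part (handled by Lemma \ref{lem10}), then apply the $t$-uniform elliptic estimate \eqref{Y55} to $w=\partial_t^{n_t}u$ with $-\Delta w=\partial_t^{n_t}\Box u-\partial_t^{n_t+2}u$, and induct to trade spatial derivatives for temporal ones. The paper phrases the induction as being on $|b|+j$ rather than on the number of spatial derivatives, but the mechanism and all the ingredients (including the observation that $\partial_t$ preserves the Dirichlet condition, which is why the elliptic lemma may only be applied to pure time derivatives) are identical.
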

\begin{proof}
A straight calculation shows that
\begin{align*}
&\quad\sum_{|a|\leq k+1}\|\partial\partial^au(t,\cdot)\|^2_{L^2(\mathbb
R^2\backslash\mathcal {K})}
\\&\leq  \sum_{\substack{|b|+j\leq k+2\\|b|\geq2}}\|\nabla^b\partial_t^ju(t,\cdot)\|^2_{L^2(\mathbb
R^2\backslash\mathcal {K})}+\sum_{j\leq k+1}\|\partial\partial_t^ju(t,\cdot)\|^2_{L^2(\mathbb
R^2\backslash\mathcal {K})}.
\end{align*}
By Lemma \ref{lem11} and using $\Delta u=\partial_t^2u-\Box u$,
\begin{align*}
&\quad\sum_{\substack{|b|+j\leq k+2\\|b|\geq2}}\|\nabla^b\partial_t^ju(t,\cdot)\|^2_{L^2(\mathbb
R^2\backslash\mathcal {K})}
\\&\leq C\sum_{0\leq j\leq k}\|\Delta\partial_t^ju(t,\cdot)\|^2_{H^{k-j}(\mathbb
R^2\backslash\mathcal {K})}+C\sum_{0\leq j\leq k}\|\nabla\partial_t^ju(t,\cdot)\|^2_{L^2(\mathbb
R^2\backslash\mathcal {K})}
\\&\leq C\sum_{|a|\leq k}\|\partial^a\Box
u(t,\cdot)\|^2_{L^2(\mathbb R^2\backslash\mathcal {K})}+C\sum_{|b|+j\leq k}\|\nabla^b\partial_t^{j}\partial_t^2u(t,\cdot)\|^2_{L^2(\mathbb
R^2\backslash\mathcal {K})}
\\&\quad+C\sum_{0\leq j\leq k+1}\|\partial\partial_t^ju(t,\cdot)\|^2_{L^2(\mathbb
R^2\backslash\mathcal {K})},
\end{align*}
where the second term on the right-side of the above inequality becomes the third term by induction with respect to $|b|+j$.
\end{proof}

With Lemma \ref{lem10} and \ref{lem12} in hand, we finally come to the higher-order energy estimate involving both $\partial$ and $\Omega$, thus
\begin{lem}\label{lem9}
Let $t\geq4$ and $k\geq2$.  Assume the perturbation terms
$h^{\alpha\beta}$ satisfy the conditions \eqref{Y38}, \eqref{Y39}.
For any local smooth solution $u(t,\cdot)$ to system \eqref{Y56}
with the initial data satisfying condition \eqref{Y24}, where
$supp\{u(t,\cdot)\}\subseteq\{x:x\leq t-2\}$, we have
\begin{align*}
\sum_{\substack{|a|+j\leq
k-1\\j\leq1}}\|\partial\partial^a\Omega^ju(t,\cdot)\|^2_{L^2(\mathbb
R^2\backslash\mathcal {K})}&\leq
C\varepsilon^2+C\sum_{\substack{|a|+j\leq k-1\\j\leq1}}\|\Box_h
\partial^a\Omega^ju\|^2_{L^1_sL^2_x([4,t]\times\mathbb R^2\backslash\mathcal {K})}
\nonumber\\&\quad\ +C\sum_{\substack{|a|+j\leq
k-1\\j\leq1}}\int_4^t\int_{\mathbb R^2\backslash\mathcal
{K}}|\partial h||\partial\partial^a\Omega^ju|^2dxds
\nonumber\\&\quad\ +C\sum_{|a|\leq
k-1}\int_4^t\|\partial\partial^au(s,\cdot)\|^2_{L^2(|x|\leq2)}ds
\nonumber\\&\quad\ +C\sum_{|a|\leq
k-1}\|\partial\partial^au(t,\cdot)\|^2_{L^2(|x|\leq2)}.
\end{align*}
\end{lem}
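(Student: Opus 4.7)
The plan is to split the estimate according to whether $j=0$ or $j=1$. For $j=0$ the bound follows at once from Lemma \ref{lem12}, since $\sum_{|a|\le k-1}\|\partial\partial^a u\|_{L^2}^2$ is a subsum of the $\sum_{|a|\le k+1}$ on the left-hand side there, and the auxiliary $\sum_{|a|\le k}\|\partial^a\Box u\|_{L^2}^2$ term on the right of that lemma is reabsorbed into $\|\Box_h\partial^a u\|$ and $|\partial h|\cdot|\partial\partial^a u|^2$ contributions already present. The new content concerns $j=1$, where the obstruction is that $\Omega$ fails to preserve the null Dirichlet boundary condition on a star-shaped (but not necessarily rotationally symmetric) obstacle; my strategy is to remove the boundary entirely by a radial cutoff.

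Choose $\chi=\chi(|x|)$ with $\chi\equiv 0$ for $|x|\le 1$ and $\chi\equiv 1$ for $|x|\ge 2$, and set $v=\chi u$, extended by zero across $\mathcal{K}$. Then $v$ is smooth on $\mathbb R^2$ and vanishes in a neighbourhood of $\partial\mathcal{K}$. Because $\chi$ is radial, $\Omega\chi=0$, so $\partial^a\Omega v=\chi\,\partial^a\Omega u+R_a$, where $R_a$ is a sum of terms $(\partial^b\chi)(\partial^c\Omega u)$ with $|b|\ge 1$, supported in the annulus $1\le|x|\le 2$. Writing
\begin{align*}
\|\partial\partial^a\Omega u\|_{L^2(\mathbb R^2\setminus\mathcal{K})}\le \|\partial\partial^a\Omega v\|_{L^2(\mathbb R^2)}+\|\partial\partial^a\Omega u\|_{L^2(|x|\le 2)},
\end{align*}
and using $|\Omega|\le C|\nabla|$ on $|x|\le 2$ to bound the second term by $C\|\partial\partial^{\le|a|+1}u\|_{L^2(|x|\le 2)}\le C\|\partial\partial^{\le k-1}u\|_{L^2(|x|\le 2)}$, the time-$t$ local term on the right-hand side of the lemma is produced, and it remains to control $\|\partial\partial^a\Omega v(t,\cdot)\|_{L^2(\mathbb R^2)}$.

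For this I would apply the standard $\Box_h$ energy identity on $\mathbb R^2$ to $w=\partial^a\Omega v$, which is legitimate since $w$ is compactly supported away from $\mathcal{K}$; it yields
\begin{align*}
\tfrac12\tfrac{d}{dt}\|\partial w\|_{L^2(\mathbb R^2)}^2\le \int \Box_h w\cdot\partial_t w\,dx+C\!\int |\partial h|\,|\partial w|^2\,dx.
\end{align*}
Decompose $\Box_h w=\chi\,\Box_h(\partial^a\Omega u)+G_{\mathrm{loc}}$ with $G_{\mathrm{loc}}=[\Box_h,\chi]\partial^a\Omega u+\Box_h R_a$ supported in $1\le|x|\le 2$ and satisfying $\|G_{\mathrm{loc}}\|_{L^2}\le C\|\partial\partial^{\le k-1}u\|_{L^2(|x|\le 2)}$. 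The global piece feeds $\tfrac{d}{dt}\|\partial w\|_{L^2}\le\|\chi\,\Box_h(\partial^a\Omega u)\|_{L^2}+\cdots$ and on squaring produces the $\|\Box_h\partial^a\Omega u\|_{L^1_sL^2_x}^2$ term; the local piece is paired with $\partial_t w$ on the annulus via AM--GM,
\begin{align*}
\int G_{\mathrm{loc}}\,\partial_t w\,dx\le \tfrac12\|G_{\mathrm{loc}}\|_{L^2}^2+\tfrac12\|\partial_t w\|_{L^2(|x|\le 2)}^2,
\end{align*}
both contributions feeding the integrated local term $\int_4^t\|\partial\partial^{\le k-1}u\|_{L^2(|x|\le 2)}^2\,ds$. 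The $|\partial h|$ perturbation, split over $|x|\le 2$ and $|x|\ge 2$, delivers the $|\partial h|\cdot|\partial\partial^a\Omega u|^2$ term on $|x|\ge 2$ and again feeds the local $L^2_sL^2_x$ term on $|x|\le 2$.

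The main obstacle I anticipate is ensuring that the local commutator terms appear as $\int\|\cdot\|_{L^2(|x|\le 2)}^2\,ds$ rather than as the weaker $(\int\|\cdot\|_{L^2(|x|\le 2)}\,ds)^2$, which would introduce an extra factor of the lifespan that is fatal for the $\varepsilon^2 T\ln^3 T=A$ balance; the local AM--GM pairing above, which exploits that both $G_{\mathrm{loc}}$ and the paired kinetic factor $\partial_t w$ restricted to the annulus are genuinely localized, is the device that keeps the correct $L^2_sL^2_x$ form.
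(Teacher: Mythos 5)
Your treatment of the $j=1$ case is essentially the paper's proof. The paper writes $u_2=(1-\chi)u$ with a radial cutoff vanishing near $\mathcal{K}$ (equal to $1$ for $|x|\geq 2$), takes the $L^2(\mathbb R^2)$ inner product of $\Box_h\partial^a\Omega^j u_2$ with $\partial_t\partial^a\Omega^j u_2$, and converts the commutator terms supported in $1\leq|x|\leq 2$ into the two local terms $\sum_{|a|\leq k-1}\|\partial\partial^a u(t,\cdot)\|^2_{L^2(|x|\leq2)}$ and $\sum_{|a|\leq k-1}\int_4^t\|\partial\partial^a u(s,\cdot)\|^2_{L^2(|x|\leq2)}ds$, exactly as your AM--GM pairing on the annulus does; your observation that $\Omega\chi=0$ for a radial cutoff is implicit there, and your concern about getting the $L^2_sL^2_x$ rather than $(L^1_sL^2_x)^2$ form of the local term is precisely the point of the pairing the paper uses.

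The one genuine deviation is your $j=0$ reduction via Lemma \ref{lem12}, and the claimed reabsorption there does not work as stated: the term $\sum_{|a|\leq k}\|\partial^a\Box u(t,\cdot)\|^2_{L^2}$ on the right-hand side of Lemma \ref{lem12} is a fixed-time quantity and cannot be dominated by $\|\Box_h\partial^a u\|^2_{L^1_sL^2_x}$, nor by the $|\partial h|$ integrals (note $\Box u=\Box_h u-h^{\alpha\beta}\partial_\alpha\partial_\beta u$ produces a factor $|h|$, not $|\partial h|$). Taken literally, your route would leave an extra term on the right of Lemma \ref{lem9} that is not in its statement. This is harmless only because the detour is unnecessary: your cutoff-plus-energy argument for $j=1$ applies verbatim with $\Omega$ omitted and yields the $j=0$ case directly, which is how the paper proceeds --- both values of $j$ are handled uniformly through $u_2$, and Lemma \ref{lem12} is not invoked in this proof at all.
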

\begin{proof}
Let $\chi(x)$ be a smooth cut-off function with $\chi(x)=1$ for
$|x|\leq1$ and $\chi(x)=0$ for $|x|\geq2$. Let $u_1=\chi u$ and
$u_2=(1-\chi)u$. Hence for any $t\geq4$, we have
\begin{align}\label{Y22}
&\sum_{\substack{|a|+j\leq
k-1\\j\leq1}}\|\partial\partial^a\Omega^ju\|_{L^2(\mathbb
R^2\backslash\mathcal {K})}\nonumber\\&\leq C\sum_{|a|\leq
k-1}\|\partial\partial^au\|_{L^2(|x|\leq
2)}+\sum_{\substack{|a|+j\leq
k-1\\j\leq1}}\|\partial\partial^a\Omega^ju_2\|_{L^2(\mathbb R^2)}.
\end{align}
By taking the $L^2(\mathbb R^2)$ inner product of
$\Box_h\partial^a\Omega^j u_2$ with
$\partial_t\partial^a\Omega^ju_2$, we have
\begin{align}\label{Y17}
\sum_{\substack{|a|+j\leq
k-1\\j\leq1}}\|\partial\partial^a\Omega^ju_2\|^2_{L^2(\mathbb R^2)}
&\leq C\varepsilon^2+C\sum_{\substack{|a|+j\leq
k-1\\j\leq1}}\int_4^t\int_{\mathbb R^2}|\Box_h\partial^a\Omega^j
u_2||\partial_s\partial^a\Omega^ju_2|dxds \nonumber\\&\quad\
+C\sum_{\substack{|a|+j\leq k-1\\j\leq1}}\int_4^t\int_{\mathbb
R^2}|\partial h||\partial\partial^a\Omega^ju_2|^2dxds
\nonumber\\&\leq C\varepsilon^2+C\sum_{\substack{|a|+j\leq
k-1\\j\leq1}}\int_4^t\int_{\mathbb R^2}|\Box_h\partial^a\Omega^j
u||\partial_s\partial^a\Omega^ju_2|dxds\nonumber\\&\quad\
+C\sum_{\substack{|a|+j\leq k-1\\j\leq1}}\int_4^t\int_{\mathbb
R^2\backslash\mathcal {K}}|\partial
h||\partial\partial^a\Omega^ju|^2dxds \nonumber\\&\quad\
+C\sum_{|a|\leq
k-1}\int_4^t\|\partial\partial^au(s,\cdot)\|^2_{L^2(|x|\leq2)}ds,
\end{align}
where we used the symmetry condition \eqref{Y38} and the size
condition \eqref{Y39}. The combination of inequalities \eqref{Y22}
and \eqref{Y17} implies that Lemma \ref{lem9} holds.
\end{proof}

\section{The Proof of Theorem \ref{thm1}}
\par A functional $M(t)$ will be introduced in the following theorem below, which collects all the estimates established in section 3.
\begin{thm}\label{thm2}
Let $t\geq4$ and $k\geq9$. Suppose that
$\sup_{t,x}(t+|x|)(|h|+|\partial h|)\leq1$. Assume the perturbation
terms $h^{\alpha\beta}$ satisfy the conditions \eqref{Y38},
\eqref{Y39}, \eqref{Y41} and \eqref{Y47}.
Then for any local smooth solution $u(t,\cdot)$ to system \eqref{Y56}
with the initial data satisfying condition \eqref{Y24}, where
$supp\{u(t,\cdot)\}\subseteq\{x:x\leq t-2\}$, there exists a
positive constant $C_0$ such that
\begin{align}\label{Y14}
M(t)&\leq C_0\varepsilon^2\ln t+I+II+III+IV+V,
\end{align}
where
\begin{align*}
&M(t)=\sum_{|a|\leq k-1}\int_4^t\int_{|x|\geq
2s/3}\frac{(s+r)\big((\partial_s+\partial_r)\partial^au(s,x)\big)^2}{|s-r|(\ln|s-r|)^2}dxds
\nonumber\\&\qquad\qquad+\sum_{|a|\leq
k-1}\sup_{4\leq s\leq t}\||s-r|^{-1/2}(\ln|s-r|)^{-1}\partial^au(s,\cdot)\|^2_{L^2(|x|\geq
2s/3)}\nonumber\\&\qquad\qquad+\sum_{|a|\leq k-1}\sup_{4\leq s\leq t}\|r^{1/2}(\partial_t+\partial_r)\partial^av(s,\cdot)\|^2_{L^2(|x|\geq 2s/3)}
\nonumber\\&\qquad\qquad+\sum_{|a|\leq
k-1}\sup_{4\leq s\leq t}\||s-r|^{1/2}\partial\partial^au(s,\cdot)\|^2_{L^2(\mathbb
R^2\backslash\mathcal {K})}+\sup_{4\leq s\leq t}\sum_{j\leq1}\Big\|\frac{\Omega^ju(s,\cdot)}{r^{1/2}}\Big\|^2_{L^2(\mathbb
R^2\backslash\mathcal {K})}
\nonumber\\&\qquad\qquad+\sum_{\substack{|a|+j\leq
k-1\\j\leq1}}\sup_{4\leq s\leq t}\|\partial\partial^a\Omega^ju(s,\cdot)\|^2_{L^2(\mathbb
R^2\backslash\mathcal {K})}+\sum_{|a|\leq
k+1}\sup_{4\leq s\leq t}\|\partial\partial^au(s,\cdot)\|^2_{L^2(\mathbb
R^2\backslash\mathcal {K})},
\\&I=C\sum_{\substack{|a|+j\leq k-1\\j\leq1}}\|\Box_h
\partial^a\Omega^ju\|^2_{L^1_sL^2_x([4,t]\times\mathbb R^2\backslash\mathcal {K})}
 +C\sum_{\substack{|a|+j\leq
k-1\\j\leq1}}\int_4^t\int_{\mathbb R^2\backslash\mathcal
{K}}|\partial h||\partial\partial^a\Omega^ju|^2dxds,
\end{align*}
\begin{align*}
&II=C\sum_{j\leq k+1}\int_4^t\int_{\mathbb R^2\backslash\mathcal
{K}}|\Box_h
\partial_s^ju||\partial_s\partial_s^ju|dxds
\nonumber\\&\qquad+C\sum_{j\leq k+1}\int_4^t\int_{\mathbb
R^2\backslash\mathcal {K}}|\partial h||\partial\partial_s^ju|^2dxds
\nonumber\\&\qquad+C\sum_{|a|\leq k}\|\partial^a\Box
u(t,\cdot)\|^2_{L^2(\mathbb R^2\backslash\mathcal {K})},
\\&III=C\sum_{|a|\leq
k-1}\|r^{1/2}|s-r|^{1/2}(\ln|s-r|)\Box_h\partial^au\|^2_{L^2_sL^2_x([4,t]\times\{|x|\geq
s/2\})} \nonumber\\&\qquad\ +C\sum_{|a|\leq
k-1}\||s-r|^{1/2}(\ln|s-r|)\Box_h\partial^au\|^2_{L_s^1L_x^2([4,t]\times\{|x|\geq
s/2\})}\nonumber\\&\qquad\ +C\sum_{|a|\leq k-2}\|\langle
t-r\rangle^{1/2}\partial^a\Box u(t,\cdot)\|^2_{L_x^2(\mathbb
R^2\backslash\mathcal {K})},
\\&IV=C\sum_{|a|\leq k-1}\int_4^t\int_{\mathbb R^2\backslash\mathcal {K}}(\ln|s-r|)^{-2}|h||\partial\partial^au|^2dxds
\nonumber\\&\qquad\ +C\sum_{|a|\leq k-1}\int_4^t\int_{\mathbb
R^2\backslash\mathcal
{K}}|s-r|^{-1}(\ln|s-r|)^{-2}|h||\partial\partial^au|(|S\partial^au|+|\partial^au|)dxds,
\\&V=C\sum_{|a|\leq k-2}\int_4^ts^{-1}\|\langle
s-r\rangle^{1/2}\partial^a\Box u(s,\cdot)\|^2_{L_x^2(\mathbb
R^2\backslash\mathcal {K})}ds
\nonumber\\&\qquad\ +C\sum_{|a|\leq k-1}\int_4^t\int_{\mathbb R^2\backslash\mathcal
{K}}|\Box_h\partial^au(s,x)|\big(|S\partial^au(s,x)|+|\partial^au(s,x)|\big)dxds
\nonumber\\&\qquad\ +C\sum_{j\leq
k-1}\int_4^ts^{-1}\int_4^s\int_{\mathbb R^2\backslash\mathcal
{K}}|\Box_h\partial_{\tau}^ju(\tau,x)|\big(|S\partial_{\tau}^ju(\tau,x)|+|\partial_{\tau}^ju(\tau,x)|\big)dxd\tau
ds
\nonumber\\&\qquad\ +C\sum_{|a|\leq k-1}\int_4^t\int_{\mathbb
R^2\backslash\mathcal {K}}|\partial
h||\partial\partial^au|\big(|S\partial^au|+|\partial^au|\big)dxds
\nonumber\\&\qquad\
+C\sum_{j\leq k-1}\int_4^ts^{-1}\int_4^s\int_{\mathbb
R^2\backslash\mathcal {K}}|\partial
h||\partial\partial_{\tau}^ju|\big(|S\partial_{\tau}^ju|+|\partial_{\tau}^ju|\big)dxd\tau
ds
\nonumber\\&\qquad\ +C\sum_{|a|\leq k-1}\int_4^t\int_{\mathbb
R^2\backslash\mathcal {K}}|Sh||\partial\partial^au(s,x)|^2dxds
\nonumber\\&\qquad\ +C\sum_{j\leq
k-1}\int_4^ts^{-1}\int_4^s\int_{\mathbb R^2\backslash\mathcal
{K}}|Sh||\partial\partial_{\tau}^ju|^2dxd\tau ds.
\end{align*}
\end{thm}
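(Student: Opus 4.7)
The plan is to assemble the functional $M(t)$ term by term from the estimates of Section 3, with each of its seven summands produced by one specific lemma and the resulting source terms distributed among $I$--$V$. I would work from the cheapest pieces (pure $L^2$ energies) up to the weighted Morawetz quantities, since the former feed into the right-hand sides of the latter.

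\emph{Step 1 (pure energy).} Applying Lemma \ref{lem12} to $u$ directly bounds $\sum_{|a|\le k+1}\|\partial\partial^au\|^2_{L^2(\R^2\backslash\mathcal{K})}$ by precisely $II$. \emph{Step 2 ($\Omega$-energy).} The summand $\sum_{|a|+j\le k-1,\,j\le1}\|\partial\partial^a\Omega^ju\|^2_{L^2(\R^2\backslash\mathcal{K})}$ follows from Lemma \ref{lem9}; its right-hand side contains the two pieces that constitute $I$ together with the localized norm $\|\partial\partial^au\|^2_{L^2(|x|\le 2)}$ (in space-time integrated form and at time $t$), which is controlled by Step 1. \emph{Step 3 (scaling Morawetz).} For $\sum_{|a|\le k-1}\||s-r|^{1/2}\partial\partial^au\|^2_{L^2(\R^2\backslash\mathcal{K})}$ I would first invoke Lemma \ref{lem8} to reduce general derivatives to the temporal derivatives $\partial_t^j$, which is where the source term $\|\langle t-r\rangle^{1/2}\partial^a\Box u\|^2_{L^2}$ in $III$ enters, and then integrate the differential inequality of Lemma \ref{lem7} over $[4,t]$ using the positivity condition \eqref{Y41}. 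The nested structure $s^{-1}\int_4^s(\cdots)d\tau\,ds$ visible in $V$ reflects exactly the chaining of Lemma \ref{lem7} with the Lemma \ref{lem8} reduction, and it is this same nested integration that amplifies the initial-data contribution $\varepsilon^2$ by one factor of $\ln t$, producing the main term $C_0\varepsilon^2\ln t$.

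\emph{Step 4 (improved Morawetz near the cone).} For the three summands of $M(t)$ carrying the $(\partial_s+\partial_r)$ good-derivative, $r^{1/2}$, and $|s-r|^{-1/2}(\ln|s-r|)^{-1}$ weights, all supported in $|x|\ge 2s/3$, I would multiply $\partial^au$ by a cutoff $\eta(x,s)$ equal to one on $\{|x|\ge 2s/3\}$ and supported in $\{|x|\ge s/2\}\cap\mathbb{B}_1^c$, and apply Lemma \ref{lem6} to $\eta\,\partial^au$. The commutator $[\Box_h,\eta]\partial^au$ is supported in the transition annulus $\{s/2\le|x|\le 2s/3\}$, where $|s-r|\approx s\approx r$, and hence is absorbed by the scaling-Morawetz summand produced in Step 3 together with the high-order energy of Step 2. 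The $|h|$-weighted errors on the right-hand side of Lemma \ref{lem6} become $IV$; the $\Box_h$-pieces augmented by the weight $|s-r|^{1/2}\ln|s-r|$ furnish the spacetime-$L^2$ portions of $III$; and the residual $\|r^{-1/2}|t-r|^{1/2}\partial v\|^2_{L^2}$ is reabsorbed, since on the support of $v$ one has $r\gtrsim s\gtrsim 1$, into the scaling Morawetz norm of Step 3. \emph{Step 5 (zeroth-order $\Omega$).} The remaining summand $\sum_{j\le1}\|r^{-1/2}\Omega^ju\|^2_{L^2(\R^2\backslash\mathcal{K})}$ already appears on the left-hand side of Lemma \ref{lem5} applied to $u$ itself (the term $|\Omega u|^2/r$ is built into its energy), with source contributions entering $V$.

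\emph{Main obstacle.} The delicate point is Step 4. Lemma \ref{lem5} generates on its right the term $|\partial h||\partial u|(|Su|+|u|)$, and since $|Su|\lesssim s|\partial u|$ one cannot close the estimate using the scaling Morawetz alone; the $(\ln|s-r|)^{-1}$ gain of Lemma \ref{lem6} is precisely what absorbs this loss in the region $|x|\ge 2s/3$. The careful choice of the cutoff $\eta(x,s)$ and the bookkeeping of $[\Box_h,\eta]\partial^au$ in the transition annulus, where it must be controlled by the good-derivative $(\partial_t+\partial_r)$ and by the higher-order energy already sitting inside $M(t)$, is where the whole argument hinges. All other terms are routine rearrangements of what the Section 3 lemmas produce.
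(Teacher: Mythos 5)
Your proposal matches the paper's proof in all essentials: the last four summands of $M(t)$ come from Lemmas \ref{lem7}, \ref{lem8}, \ref{lem12} and \ref{lem9} exactly as you describe, the first three come from Lemma \ref{lem6} applied to a time-dependent cutoff $v=\varphi(|x|/t)u$ supported in $\{|x|\ge t/2\}$ with the commutator handled in the annulus $t/2\le|x|\le 2t/3$ (where $|t-r|\approx t$) via the scaling Morawetz norm, the $S$-multiplier terms are split by the null decomposition $2S=(t+r)(\partial_t+\partial_r)+(t-r)(\partial_t-\partial_r)$ into the weighted norms of $III$ plus absorbable pieces, and the nested $\int_4^t s^{-1}\int_4^s(\cdots)\,d\tau\,ds$ structure of $V$ together with the $\varepsilon^2\ln t$ main term arises from re-substituting the Lemma \ref{lem7}--\ref{lem8} bound inside $\int_4^t s^{-1}\|\,|s-r|^{1/2}\partial\partial^au(s)\|^2\,ds$, just as you say. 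The identification of the main difficulty (the linear-in-$t$ growth of $S$ being tamed by the good-derivative bulk term of the improved Morawetz estimate) is also the paper's.
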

\begin{proof}
By Lemma \ref{lem7}, Lemma \ref{lem8}, Lemma \ref{lem12} and Lemma \ref{lem9}, it is easy to see that the last
four terms of $M(t)$ is controlled by the right-hand side of
inequality \eqref{Y14}. Thus it suffices to control the first three
terms of $M(t)$. Let $\varphi(x)$ be a smooth cut-off function with
$\varphi(x)=0$ for $|x|\leq 1/2$ and $\varphi(x)=1$ for
$|x|\geq2/3$. Let $v=\varphi(|x|/t) u$. Then for any $t\geq4$, by
Lemma \ref{lem6}, we have
\begin{align}\label{Y32}
&\sup_{4\leq s\leq t}\Big(\sum_{|a|\leq k-1}\||s-r|^{1/2}\partial\partial^av(s,\cdot)\|^2_{L^2(\mathbb R^2)}
\nonumber\\&\qquad\quad+\sum_{|a|\leq
k-1}\||s-r|^{-1/2}(\ln|s-r|)^{-1}\partial^av(s,\cdot)\|^2_{L^2(\mathbb
R^2)}\nonumber\\&\qquad\quad+\sum_{|a|\leq k-1}\|r^{1/2}(\partial_t+\partial_r)\partial^av(s,\cdot)\|^2_{L^2(\mathbb R^2)}\Big)\nonumber\\&\quad+\sum_{|a|\leq 
k-1}\int_4^t\int_{\mathbb
R^2}\frac{(s+r)\big((\partial_s+\partial_r)\partial^av\big)^2}{|s-r|(\ln|s-r|)^2}dxds
\nonumber\\&\leq C\varepsilon^2+C\sum_{|a|\leq
k-1}\int_4^t\int_{\mathbb R^2}|\Box_h\partial^av|
(|S\partial^av|+|\partial^av|)dxds \nonumber\\&\quad+C\sum_{|a|\leq
k-1}\int_4^t\int_{\mathbb R^2}|\partial h||\partial
\partial^av|\big(|S\partial^av|+|\partial^av|\big)dxds
\nonumber\\&\quad+C\sum_{|a|\leq k-1}\int_4^t\int_{\mathbb
R^2}|Sh||\partial \partial^av|^2dxds +C\sum_{|a|\leq
k-1}\int_4^t\int_{\mathbb
R^2}(\ln|s-r|)^{-2}|h||\partial\partial^av|^2dxds
\nonumber\\&\quad+C\sum_{|a|\leq k-1}\int_4^t\int_{\mathbb
R^2}|s-r|^{-1}(\ln|s-r|)^{-2}|h||\partial\partial^a
v|(|S\partial^av|+|\partial^av|)dxds
\nonumber\\&\quad+C\sum_{|a|\leq
k-1}\int_4^t\|r^{-1/2}|s-r|^{1/2}\partial\partial^av(s,\cdot)\|^2_{L^2(\mathbb
R^2)}ds.
\end{align}
A straight calculation shows that $\Box_h\partial^av=\varphi\Box_h\partial^au+R_0$, where $R_0$ are the terms with the form for $|b|+|c|\leq |a|$,
\begin{align*}
\partial\partial^b\varphi\partial\partial^cu,\ \partial^2\partial^b\varphi\partial^cu,\ h^{\alpha\beta}\partial\partial^b\varphi\partial\partial^cu,\ 
h^{\alpha\beta}\partial^2\partial^b\varphi\partial^cu.
\end{align*}
Since $|\partial\partial^b\varphi|\leq Ct^{-1}$ and $|\partial^2\partial^b\varphi|\leq Ct^{-2}$, we have
\begin{align*}
&\quad\int_4^t\int_{\mathbb R^2}|R_0||S\partial^av|dxds
\\&\leq C\sum_{|b|\leq k-1}\int_4^ts^{-1}\int_{\frac s2\leq |x|\leq \frac{2s}3}|\partial \partial^bu||S\partial^av|dxds
+C\int_4^ts^{-2}\int_{\frac s2\leq |x|\leq \frac{2s}3}|u||S\partial^av|dxds
\\&\leq C\sum_{|b|\leq k-1}\int_4^t\int_{\frac s2\leq |x|\leq \frac{2s}3}|\partial \partial^bu||\partial\partial^av|dxds
+C\int_4^ts^{-1}\int_{\frac s2\leq |x|\leq \frac{2s}3}|u||\partial\partial^av|dxds
\\&\leq C\sum_{|b|\leq k-1}\int_4^t\int_{\frac s2\leq |x|\leq \frac{2s}3}|\partial \partial^bu|^2dxds
+C\int_4^ts^{-2}\int_{\frac s2\leq |x|\leq \frac{2s}3}|u|^2dxds,
\end{align*}
which implies
\begin{align*}
&\quad\int_4^t\int_{\mathbb R^2}|R_0||S\partial^av|dxds
\\&\leq C\sum_{|b|\leq k-1}\int_4^ts^{-1}\int_{\mathbb R^2\backslash\mathcal{K}}|s-r||\partial \partial^bu|^2dxds
+C\int_4^ts^{-1}\int_{\mathbb R^2\backslash\mathcal{K}}\frac{|u|^2}rdxds.
\end{align*}
Thus
\begin{align}\label{Y33}
&\sum_{|a|\leq k-1}\int_4^t\int_{\mathbb R^2}|\Box_h\partial^av|
|S\partial^av|dxds\nonumber\\&\leq C\sum_{|a|\leq
k-1}\int_4^t\int_{\mathbb R^2}|\Box_h\partial^au|
|S\partial^av|dxds\nonumber\\&\quad+C\sum_{|a|\leq
k-1}\int_4^ts^{-1}\||s-r|^{1/2}\partial\partial^au(s,\cdot)\|_{L^2(\mathbb
R^2\backslash\mathcal {K})}^2ds
\nonumber\\&\quad+C\int_4^ts^{-1}\int_{\mathbb R^2\backslash\mathcal{K}}\frac{|u|^2}rdxds,
\end{align}
and
\begin{align}\label{Y36}
&\sum_{|a|\leq
k-1}\int_4^t\|r^{-1/2}|s-r|^{1/2}\partial\partial^av(s,\cdot)\|^2_{L^2(\mathbb
R^2)}ds \nonumber\\&\leq C\sum_{|a|\leq
k-1}\int_4^ts^{-1}\||s-r|^{1/2}\partial\partial^au(s,\cdot)\|_{L^2(\mathbb
R^2\backslash\mathcal {K})}^2ds
\nonumber\\&\quad+C\int_4^ts^{-1}\int_{\mathbb R^2\backslash\mathcal{K}}\frac{|u|^2}rdxds.
\end{align}
The decomposition $2Sv=(t+r)(v_t+v_r)+(t-r)(v_t-v_r)$ shows that
\begin{align}\label{Y34}
&\int_4^t\int_{\mathbb R^2}|\Box_h\partial^au||S\partial^av|dxds
\nonumber\\=&\int_4^t\int_{|x|\geq s/2}|\Box_h\partial^au||S\partial^av|dxds
\nonumber\\\leq& C\int_4^t\int_{|x|\geq s/2}|\Box_h\partial^au||r(\partial_s+\partial_r)\partial^av|dxds+C\int_4^t\int_{|x|\geq 
s/2}|\Box_h\partial^au||(s-r)\partial\partial^av|dxds
\nonumber\\\leq&
C\|r^{1/2}|s-r|^{1/2}(\ln|s-r|)\Box_h\partial^au\|^2_{L^2_sL^2_x([4,t]\times\{|x|\geq
s/2\})}
\nonumber\\&+C\||s-r|^{1/2}\Box_h\partial^au\|^2_{L_s^1L_x^2([4,t]\times\{|x|\geq
s/2\})}+R_1,
\end{align}
where we used H\"{o}lder inequality and Young inequality, and
\begin{align*}
R_1=&\delta \|r^{1/2}|s-r|^{-1/2}(\ln|s-r|)^{-1}(\partial_t+\partial_r)\partial^av\|^2_{L^2_sL^2_x([4,t]\times\{|x|\geq
s/2\})}
\\&+\delta\sup_{4\leq s\leq t}\||s-r|^{1/2}\partial\partial^av\|_{L_x^2(|x|\geq s/2)},
\end{align*}
which can be absorbed by  by the left-hand side of \eqref{Y32}, provided that $\delta$ is small enough.
\par Similarly, by H\"{o}lder inequality and Young inequality, we have
\begin{align}\label{Y42}
&\sum_{|a|\leq k-1}\int_4^t\int_{\mathbb R^2}|\Box_h\partial^av||\partial^av|dxds
\nonumber\\&\leq \sum_{|a|\leq k-1}\int_4^t\int_{\mathbb R^2}|\Box_h\partial^au||\partial^av|dxds+C\sum_{\substack{|a|\leq k-1\\|b|\leq k-1}}\int_4^t
s^{-1}\int_{\frac s2\leq |x|\leq \frac{2s}3}|\partial\partial^bu||\partial^av|dxds
\nonumber\\&\quad+C\sum_{\substack{|a|\leq k-1}}\int_4^t
s^{-2}\int_{\frac s2\leq |x|\leq \frac{2s}3}|u||\partial^av|dxds
\nonumber\\&\leq \sum_{|a|\leq k-1}\int_4^t\int_{\mathbb R^2}|\Box_h\partial^au||\partial^av|dxds+C\sum_{\substack{|a|\leq k-1\\|b|\leq k-1}}\int_4^t
\int_{\frac s2\leq |x|\leq \frac{2s}3}|\partial\partial^bu|^2dxds
\nonumber\\&\quad+C\sum_{\substack{|a|\leq k-1}}\int_4^t
s^{-2}\int_{\frac s2\leq |x|\leq \frac{2s}3}|u|^2dxds
\nonumber\\&\leq C\sum_{|a|\leq k-1}\||s-r|^{1/2}(\ln|s-r|)\Box_h\partial^au\|^2_{L_s^1L_x^2([4,t]\times\{|x|\geq s/2\})}
\nonumber\\&\quad+C\sum_{|a|\leq k-1}\int_4^ts^{-1}\||s-r|^{1/2}\partial\partial^au(s,\cdot)\|_{L^2(\mathbb R^2\backslash\mathcal {K})}^2ds+R_2
\nonumber\\&\quad+C\int_4^ts^{-1}\int_{\mathbb R^2\backslash\mathcal{K}}\frac{|u|^2}rdxds,
\end{align}
where
\begin{align*}
R_2=\delta\sup_{4\leq s\leq t}\sum_{|a|\leq k-1}\||s-r|^{-1/2}(\ln|s-r|)^{-1}\partial^av\|_{L_x^2(\mathbb R^2)}^2.
\end{align*}
Also the terms $R_2$ can be absorbed by the left-hand side of \eqref{Y32} provided that $\delta$ is small. We can apply similar argument to the remainder terms 
on the right-hand side of inequality \eqref{Y32}. This completes the proof of Theorem 4.1.
\end{proof}
\par In the remainder of this section, let
$h^{\alpha\beta}=-B_{\gamma}^{\alpha\beta}\partial_{\gamma}u$. We
shall prove that, for the local smooth solution to system
\eqref{Y1}, there exist positive constants $\varepsilon_0$ and $A$
such that
\begin{align}\label{Y26}
M(t)\leq 4C_0\varepsilon^2\ln T_{\varepsilon},
\end{align}
for all $4\leq t\leq T_{\varepsilon}$, where $\varepsilon\leq
\varepsilon_0$ and $\varepsilon^2
T_{\varepsilon}\ln^3T_{\varepsilon}=A$. By the local existence
theory, estimate \eqref{Y26} implies Theorem 1.1 holds. By the
continuity argument, to prove estimate \eqref{Y26}, it suffices to
prove estimate \eqref{Y26} holds with $4C_0$ replaced by $2C_0$,
assuming that inequality \eqref{Y26} holds.
\par For all $4\leq t\leq T_{\varepsilon}$, by Lemma \ref{lem4}, we have
\begin{align*}
(t+|x|)(|h|+|\partial h|)\leq Ct^{1/2}(t^{1/2}|h|+t^{1/2}|\partial
h|)\leq CT^{1/2}_{\varepsilon}M^{1/2}(t)\leq CA^{1/2}.
\end{align*}
Hence there exists a positive constant $A_0$ independent of
$\varepsilon$ such that inequalities \eqref{Y41} and \eqref{Y47}
hold, provided that $A\leq A_0$. Hence Theorem 4.1 holds. In the following we are going to
estimate the right-hand side of inequality \eqref{Y14}.
\subsection{Estimates of terms $I$ and $II$}
~\\
Since $[\partial,\Box]=0$ and $[\Omega,\Box]=0$, hence
\begin{align*}
\Box\partial^a\Omega^ju=\partial^a\Omega^j\big(B(\partial u)+B_{\gamma}^{\alpha\beta}\partial_{\gamma}u\partial_{\alpha}\partial_{\beta}u\big),
\end{align*}
which implies
\begin{align}\label{Y48}
&\sum_{\substack{|a|+j\leq
k-1\\j\leq1}}\|\Box_h\partial^a\Omega^ju(s,\cdot)\|_{L^2(\mathbb
R^2\backslash\mathcal {K})}\nonumber\\&\leq
C\sum_{\substack{|b|+|c|+j\leq
k-1\\j\leq1}}\|\partial\partial^bu\partial\partial^c\Omega^ju\|_{L^2(\mathbb
R^2\backslash\mathcal
{K})}\nonumber\\&\quad+C\sum_{\substack{|b|+|c|+j\leq k-1\\|c|+j\leq
k-2,j\leq1}}\|\partial\partial^bu\partial\nabla\partial^c\Omega^ju\|_{L^2(\mathbb
R^2\backslash\mathcal
{K})}\nonumber\\&\quad+C\sum_{\substack{|b|+|c|+j\leq k-1\\|b|\leq
k-2,j\leq1}}\|\partial\partial^c\Omega^ju\partial\nabla\partial^bu\|_{L^2(\mathbb
R^2\backslash\mathcal {K})},
\end{align}
by using Remark 1.3, we divide the estimate into two cases: $j=0$ or $j=1$. In the region $|x|\leq2s/3$, by using the
Morawetz energy norms and inequality \eqref{Y30}, it is not difficult
to show that the right-hand side of inequality \eqref{Y48} is
bounded by $Cs^{-1/2}M(s)$. In the region $|x|\geq2s/3$, for the case $j=0$, obviously the right-hand side of
inequality \eqref{Y48} is bounded by $Cs^{-1/2}M(s)$. When $j=1$ and
$|b|\leq|c|$, by inequality \eqref{Y25}, the right-hand side of
inequality \eqref{Y48} is bounded by $Cs^{-1/2}M(s)$. In the region
$|x|\geq2s/3$, when $j=1$ and $|b|\geq|c|$, by Lemma \ref{lem2} and Poincar\'{e}
inequality, the first two terms on the right-hand side of
\eqref{Y48} can be estimated as follows:
\begin{align*}
&\|\partial\partial^bu\partial\partial^c\Omega^ju\|_{L^2(|x|\geq2s/3)}
+\|\partial\partial^bu\partial\nabla\partial^c\Omega^ju\|_{L^2(|x|\geq2s/3)}
\\\leq &Cs^{-1/2}\|r^{1/2}\partial\partial^bv\partial\partial^c\Omega^jv\|_{L^2(\mathbb
R^2)}+Cs^{-1/2}\|r^{1/2}\partial\partial^bv\partial\nabla\partial^c\Omega^jv\|_{L^2(\mathbb
R^2)}
\\\leq& Cs^{-1/2}M(s),
\end{align*}
where $v=\varphi(|x|/t) u$ and $\varphi(x)$ is a smooth cut-off
function with $\varphi(x)=0$ when $|x|\leq1/2$ and $\varphi(x)=1$
when $|x|\geq2/3$. Since $\partial_1u=x_1/r\partial_ru-x_2/r^2\Omega
u$ and $\partial_1u=x_2/r\partial_ru+x_1/r^2\Omega u$, by Lemma \ref{lem3}
and Poincar\'{e} inequality, for $|b|+|c|\leq k-2$, $j=1$ and
$|b|\geq|c|$, in the region $|x|\geq2s/3$, the
last term on the right-hand side of inequality \eqref{Y48} can be
estimated as
\begin{align*}
&\|\partial\partial^c\Omega^ju\partial\nabla\partial^bu\|_{L^2(|x|\geq2s/3)}
\\\leq& Cs^{-1}M(s)+Cs^{-1/2}\|r^{1/2}\partial\partial^c\Omega^jv\partial_r\partial\partial^bv\|_{L^2(\mathbb R^2)}
\\\leq& Cs^{-1}M(s)+Cs^{-1/2}\|\partial\partial^c\Omega^jv\|_{H^1(\mathbb R^2)}\big(\|\partial_r\partial\partial^bv\|_{H^2(\mathbb R^2)}
+\|\Omega\partial\partial^bv\|_{L^2(\mathbb R^2)}\big)
\\\leq& Cs^{-1/2}M(s).
\end{align*}
Combining the discussion above together, we have
\begin{align*}
\sum_{\substack{|a|+j\leq
k-1\\j\leq1}}\|\Box_h\partial^a\Omega^ju(s,\cdot)\|_{L^2(\mathbb
R^2\backslash\mathcal {K})}\leq Cs^{-1/2}M(s).
\end{align*}
By Lemma \ref{lem4}, we have $|\partial h|\leq Cs^{-1/2}M^{1/2}(s)$, which
implies for any $4\leq t\leq T_{\varepsilon}$,
\begin{align*}
I\leq C\varepsilon^4T_{\varepsilon}\ln^2T_{\varepsilon}+C\varepsilon^{3}T^{1/2}_{\varepsilon}\ln^{3/2}T_{\varepsilon}.
\end{align*}
By Lemma \ref{lem4} again, it is not difficult to obtain 
\[II\leq
C\varepsilon^{3}T^{1/2}_{\varepsilon}\ln^{3/2}T_{\varepsilon}+C\varepsilon^4\ln^2T_{\varepsilon}.
\]
\subsection{Estimates of terms $III$ and $IV$}
~\\
For $|a|\leq k-2$ and $4\leq t\leq T_{\varepsilon}$, the term
$\|\langle t-r\rangle^{1/2}\partial^a\Box
u(t,\cdot)\|^2_{L^2(\mathbb R^2\backslash\mathcal {K})}$ is
controlled by $C\varepsilon^4\ln^2T_{\varepsilon}$. A straight
calculation shows that
\begin{align*}
|\Box_h\partial^au|\leq
C\sum_{|b|+|c|=|a|}|\partial\partial^bu||\partial\partial^cu|+C\sum_{\substack{|b|+|c|=|a|\\|c|\neq
|a|}}|\partial\partial^bu||\partial^2\partial^cu|.
\end{align*}
Thus by inequality \eqref{Y25} and using the scaling Morawetz energy norms, we
have
\begin{align*}
&\sum_{|a|\leq
k-1}\|r^{1/2}|s-r|^{1/2}(\ln|s-r|)\Box_h\partial^au\|^2_{L^2_sL^2_x([4,t]\times\{|x|\geq
s/2\})}\leq C\varepsilon^4T_{\varepsilon}\ln^4T_{\varepsilon},
\\&\sum_{|a|\leq
k-1}\||s-r|^{1/2}(\ln|s-r|)\Box_h\partial^au\|^2_{L_s^1L_x^2([4,t]\times\{|x|\geq
s/2\})}\leq C\varepsilon^4T_{\varepsilon}\ln^4T_{\varepsilon}.
\end{align*}
Thus for any $4\leq t\leq T_{\varepsilon}$, we have 
\[III\leq
C\varepsilon^4T_{\varepsilon}\ln^4T_{\varepsilon}.
\]
\par Next let us estimate $IV$.
Since $\|h(s,\cdot)\|_{L^{\infty}}\leq Cs^{-1/2}M^{1/2}(s)$ and
$\ln|s-r|\geq\ln2$ in the region $r\leq s-2$, we then have
\begin{align*}
&\sum_{|a|\leq k-1}\int_4^t\int_{\mathbb R^2\backslash\mathcal
{K}}(\ln|s-r|)^{-2}|h||\partial\partial^au|^2dxds
\\&\leq C\int_4^ts^{-1/2}M^{3/2}(s)ds
\\&\leq CT_{\varepsilon}^{1/2}\varepsilon^3\ln^{3/2}T_{\varepsilon}.
\end{align*}
By H$\ddot{o}$lder inequality, we have
\begin{align*}
&\sum_{|a|\leq
k-1}\int_4^t\int_{|x|\geq2s/3}|s-r|^{-1}(\ln|s-r|)^{-2}|h||\partial\partial^a
u||\partial^au|dxds
\\&\leq C\int_4^t\|h\|_{L^{\infty}(\mathbb
R^2\backslash\mathcal
{K})}M(s)ds
\\&\leq CT_{\varepsilon}^{1/2}\varepsilon^3\ln^{3/2}T_{\varepsilon}.
\end{align*}
By H$\ddot{o}$lder inequality and Poincar\'{e} inequality, we have
\begin{align*}
&\sum_{|a|\leq
k-1}\int_4^t\int_{|x|\leq2s/3}|s-r|^{-1}(\ln|s-r|)^{-2}|h||\partial\partial^a
u||\partial^au|dxds
\\&\leq CT_{\varepsilon}^{1/2}\varepsilon^3\ln^{3/2}T_{\varepsilon}.
\end{align*}
Then it remains to estimate
\begin{align*}
\sum_{|a|\leq k-1}\int_4^t\int_{\mathbb R^2\backslash\mathcal
{K}}|s-r|^{-1}(\ln|s-r|)^{-2}|h||\partial\partial^a
u||S\partial^au|dxds.
\end{align*}
It is easy to see that the above integral in the region $|x|\leq2s/3$ can be controlled by $CT_{\varepsilon}^{1/2}\varepsilon^3\ln^{3/2}T_{\varepsilon}$.
For this integral in the region $|x|\geq2s/3$, by the decomposition $2Su=(t+r)(u_t+u_r)+(t-r)(u_t-u_r)$ and H$\ddot{o}$lder inequality, we have
\begin{align*}
&\sum_{|a|\leq
k-1}\int_4^t\int_{|x|\geq2s/3}|s-r|^{-1}(\ln|s-r|)^{-2}|h||\partial\partial^a
u||S\partial^au|dxds
\\&\leq C\sum_{|a|\leq
k-1}\int_4^t\int_{|x|\geq2s/3}|s-r|^{-1}(\ln|s-r|)^{-2}|h||\partial\partial^a
u||(s+r)(\partial_s+\partial_r)\partial^au|dxds
\\&\quad+CT_{\varepsilon}^{1/2}\varepsilon^3\ln^{3/2}T_{\varepsilon}
\\&\leq C\sum_{|a|\leq k-1}\|r^{1/2}h\partial\partial^a
u\|_{L_s^2L_x^2([4,t]\times\{|x|\geq2s/3\})}M^{1/2}(t)+CT_{\varepsilon}^{1/2}\varepsilon^3\ln^{3/2}T_{\varepsilon}
\\&\leq CT_{\varepsilon}^{1/2}\varepsilon^3\ln^{3/2}T_{\varepsilon}.
\end{align*}
Hence we obtain 
\[IV\leq
CT_{\varepsilon}^{1/2}\varepsilon^3\ln^{3/2}T_{\varepsilon}.
\]
\subsection{Estimate of Term $V$}
~\\
Obviously for any $4\leq t\leq T_{\varepsilon}$, the first term in
$V$ is controlled by $C\varepsilon^4\ln^2 T_{\varepsilon}$.
\par For the second term in $V$, we first consider it inside the cone. A straight calculation yields that
\begin{align*}
&\int_{|x|\leq2s/3}|\Box_h\partial^au(s,x)||S\partial^au(s,x)|dx
\\\leq& C\sum_{|b|+|c|\leq |a|,|c|\neq |a|}\int_{|x|\leq2s/3}|\partial\partial^bu(s,x)||\partial^2\partial^cu(s,x)||S\partial^au(s,x)|dx
\\\leq& C\sum_{|b|+|c|\leq |a|,|c|\neq |a|}\int_{|x|\leq2s/3}|s-r||\partial\partial^bu(s,x)||\partial^2\partial^cu(s,x)||\partial\partial^au(s,x)|dx
\\\leq& Cs^{-1/2}\sum_{|b|+|c|\leq |a|,|c|\neq |a|}\int_{|x|\leq2s/3}(|s-r|^{1/2}|\partial\partial^bu(s,x)|)(|s-r|^{1/2}|\partial^2\partial^cu(s,x)|)
\\&\qquad\qquad\qquad\qquad\qquad\qquad\qquad\times(|s-r|^{1/2}|\partial\partial^au(s,x)|)dx
\\\leq& Cs^{-1/2}M^{3/2}(s),
\end{align*}
where we used H\"{o}lder inequality and inequality \eqref{Y30} in the last step.
\par
By H$\ddot{o}$lder inequality and inequality \eqref{Y30}, we have
\begin{align*}
&\int_{|x|\leq2s/3}|\Box_h\partial^au(s,x)||\partial^au(s,x)|dx\\\leq&
\|\Box_h\partial^au(s,\cdot)\|_{L^2(|x|\leq2s/3)}\|\partial^au(s,\cdot)\|_{L^2(|x|\leq2s/3)}
\\\leq& C\sum_{|b|+|c|\leq 
|a|,|c|\neq|a|}\|\partial\partial^bu(s,\cdot)\partial^2\partial^cu(s,\cdot)\|_{L^2(|x|\leq2s/3)}\|\partial^au(s,\cdot)\|_{L^2(|x|\leq2s/3)}
\\\leq& C(1+s)^{-1/2}\sum_{|b|+|c|\leq |a|,|c|\neq|a|}\|(|s-r|^{1/2}\partial\partial^bu)(|s-r|^{1/2}\partial^2\partial^cu)\|_{L^2(|x|\leq2s/3)}
\\&\times\Big(\sum_{|b|\leq |a|-1}\|\partial\partial^bu\|_{L^2(|x|\leq2s/3)}+\Big\|\frac u {r^{1/2}}\Big\|_{L^2(\mathbb R^2\backslash\mathcal{K})}\Big)
\\\leq& Cs^{-1/2}M^{3/2}(s).
\end{align*}
\par For the second term of $V$ close to the cone, by using the decomposition $2S=(t-r)(\partial_t-\partial_r)+(t+r)(\partial_t+\partial_r)$, inequality 
\eqref{Y25} and H$\ddot{o}$lder inequality, for any $4\leq t\leq T_{\varepsilon}$, we have
\begin{align*}
&\int_4^t\int_{|x|\geq2s/3}|\Box_h\partial^au(s,x)||S\partial^au(s,x)|dxds
\\\leq& C\int_4^t\int_{|x|\geq2s/3}|s-r||\Box_h\partial^au(s,x)||\partial\partial^au(s,x)|dxds
\\&+C\int_4^t\int_{|x|\geq2s/3}r|\Box_h\partial^au(s,x)||(\partial_t+\partial_r)\partial^au(s,x)|dxds
\\\leq& C\int_4^ts^{-1/2}M^{3/2}(s)ds+CM^{1/2}(t)\|r^{1/2}|s-r|^{1/2}\ln|s-r|\Box_h\partial^au\|_{L_s^2L_x^2([4,t]\times\{|x|\geq2s/3\})}
\\\leq& Ct^{1/2}\sup_{4\leq s\leq t}M^{3/2}(s)+CM^{1/2}(t)\ln t\sum_{|b|+|c|\leq |a|,|c|\neq 
|a|}\|r^{1/2}|s-r|^{1/2}\partial\partial^bu\partial^2\partial^cu\|_{L_s^2L_x^2([4,t]\times\{|x|\geq2s/3\})}
\\\leq& Ct^{1/2}\sup_{4\leq s\leq t}M^{3/2}(s)+CM^{1/2}(t)(\ln t) t^{1/2}\sup_{4\leq s\leq t}M(s)
\\\leq& CT_{\varepsilon}^{1/2}\varepsilon^3\ln^{5/2}T_{\varepsilon}.
\end{align*}
By H$\ddot{o}$lder inequality, for any $4\leq t\leq T_{\varepsilon}$, it holds that
\begin{align*}
&\int_4^t\int_{|x|\geq2s/3}|\Box_h\partial^au(s,x)||\partial^au(s,x)|dxds
\\\leq& C\Big(\sup_{0\leq s\leq t}M^{1/2}(s)\Big)\||s-r|^{1/2}\ln|s-r|\Box_h\partial^au\|_{L^1_sL_x^2([4,t]\times\{|x|\geq 2s/3\})}
\\\leq& C\Big(\sup_{0\leq s\leq t}M^{1/2}(s)\Big)\ln t\sum_{|b|+|c|\leq 
|a|,|c|\neq|a|}\int_4^ts^{-1/2}\|r^{1/2}|s-r|^{1/2}\partial\partial^bu\partial^2\partial^cu\|_{L^2(|x|\geq 2s/3)}dxds
\\\leq& C\Big(\sup_{0\leq s\leq t}M^{3/2}(s)\Big)t^{1/2}\ln t
\\\leq& CT_{\varepsilon}^{1/2}\varepsilon^3\ln^{5/2}T_{\varepsilon}.
\end{align*}
Thus for any $4\leq t\leq T_{\varepsilon}$, we have
\begin{align}\label{Y49}
&\sum_{|a|\leq k-1}\int_4^t\int_{\mathbb R^2\backslash\mathcal
{K}}|\Box_h\partial^au(s,x)|\big(|S\partial^au(s,x)|+|\partial^au(s,x)|\big)dxds
\nonumber\\&\leq
CT_{\varepsilon}^{1/2}\varepsilon^3\ln^{5/2}T_{\varepsilon}.
\end{align}
Similarly, for any $4\leq s\leq T_{\varepsilon}$, it holds
\begin{align*}
\sum_{j\leq k-1}\int_4^s\int_{\mathbb R^2\backslash\mathcal
{K}}|\Box_h\partial_{\tau}^ju(\tau,x)|\big(|S\partial_{\tau}^ju(\tau,x)|+|\partial_{\tau}^ju(\tau,x)|\big)dxd\tau
\leq Cs^{1/2}\varepsilon^3\ln^{5/2}T_{\varepsilon},
\end{align*}
which implies the third terms of $V$
\begin{align}\label{Y50}
&\sum_{j\leq k-1}\int_4^ts^{-1}\int_4^s\int_{\mathbb
R^2\backslash\mathcal
{K}}|\Box\partial_{\tau}^ju(\tau,x)|\big(|S\partial_{\tau}^ju(\tau,x)|+|\partial_{\tau}^ju(\tau,x)|\big)dxd\tau
ds
\nonumber\\&\leq CT_{\varepsilon}^{1/2}\varepsilon^3\ln^{5/2}T_{\varepsilon}.
\end{align}
\par A similar argument to the estimates \eqref{Y49} and \eqref{Y50} shows that,
for any $4\leq t\leq T_{\varepsilon}$, the fourth and fifth terms in $V$
can be controlled by $CT_{\varepsilon}^{1/2}\varepsilon^3\ln^{5/2}T_{\varepsilon}$.
\par In the region $|x|\leq3s/4$, the sixth term in $V$ is controlled by $CT_{\varepsilon}^{1/2}\varepsilon^3\ln^{3/2}T_{\varepsilon}$. For the sixth term
of $V$ in the region $|x|\geq3s/4$, we have
\begin{align*}
&\int_4^t\int_{|x|\geq3s/4}|Sh||\partial\partial^au(s,x)|^2dxds
\nonumber\\\leq& C\int_4^t\int_{|x|\geq3s/4}|S\partial u||\partial\partial^au(s,x)|^2dxds
\nonumber\\\leq& V_1+V_2,
\end{align*}
where
\begin{align*}
&V_1=C\int_4^t\int_{|x|\geq3s/4}r|(\partial_s+\partial_r)\partial u||\partial\partial^au(s,x)|^2dxds,
\nonumber\\&V_2=C\int_4^t\int_{|x|\geq3s/4}|s-r||(\partial_s-\partial_r)\partial u||\partial\partial^au(s,x)|^2dxds.
\end{align*}
A straight calculation shows that
\begin{align*}
V_2&\leq C\int_4^t\int_{|x|\geq3s/4}|s-r||\partial^2u||\partial\partial^au(s,x)|^2dxds
\\&\leq CM(t)\int_4^ts^{-1/2}\|r^{1/2}\partial^2u\|_{L^{\infty}(|x|\geq 3s/4)}ds
\\&\leq CM^{3/2}(t)t^{1/2}\leq CT_{\varepsilon}^{1/2}\varepsilon^3\ln^{3/2}T_{\varepsilon}.
\end{align*}
We divide the estimate of $V_1$ into two cases: $|a|\geq k-2$ or $|a|\leq k-3$. When $|a|\leq k-3$, by H\"{o}lder inequality,
\begin{align*}
V_1&\leq C\int_4^t\int_{|x|\geq3s/4}|r^{1/2}|s-r|^{-1/2}(\partial_s+\partial_r)\partial 
u|||s-r|^{1/2}\partial\partial^au(s,x)||r^{1/2}\partial\partial^au(s,x)|dxds
\\&\leq C\int_4^t\|r^{1/2}|s-r|^{-1/2}(\partial_s+\partial_r)\partial u\|_{L_x^2(|x|\geq 
3s/4)}\cdot\||s-r|^{1/2}\partial\partial^au\|_{L_x^2}\cdot\|r^{1/2}\partial\partial^au\|_{L^{\infty}_x}ds
\\&\leq CM^{3/2}(t)t^{1/2}\ln t\leq CT_{\varepsilon}^{1/2}\varepsilon^3\ln^{5/2}T_{\varepsilon}.
\end{align*}
When $|a|\geq k-2$, by H\"{o}lder inequality and using the decomposition $\partial_s=(\partial_s+\partial_r)-\partial_r$, $\nabla=\frac xr\partial_r-\frac 
x{r^2}\wedge\Omega$, we have
\begin{align}\label{Y57}
V_1 &\leq C\int_4^t\||r(\partial_s+\partial_r)\partial u|\cdot|\partial_s\partial^au(s,x)|^2\|_{L_x^1(|x|\geq3s/4)}ds
\nonumber\\&\quad+ C\int_4^t\||r(\partial_s+\partial_r)\partial u|\cdot|\nabla\partial^au(s,x)|^2\|_{L_x^1(|x|\geq3s/4)}ds
\nonumber\\&\leq C\int_4^t\||r(\partial_s+\partial_r)\partial u|\cdot|(\partial_s+\partial_r)\partial^au(s,x)|^2\|_{L_x^1(|x|\geq3s/4)}ds
\nonumber\\&\quad+ C\int_4^ts^{-1}\||(\partial_s+\partial_r)\partial u|\cdot|\Omega\partial^au(s,x)|^2\|_{L_x^1(|x|\geq3s/4)}ds
\nonumber\\&\quad+C\int_4^t\||r(\partial_s+\partial_r)\partial u|\cdot|\partial_r\partial^au(s,x)|^2\|_{L_x^1(|x|\geq3s/4)}ds
\nonumber\\&\leq C\int_4^ts^{-1/2}\|r^{1/2}(\partial_s+\partial_r)\partial 
u\|_{L_x^{\infty}}\cdot\|r^{1/2}(\partial_s+\partial_r)\partial^au(s,x)\|^2_{L_x^2(|x|\geq3s/4)}ds
\nonumber\\&\quad+ C\int_4^ts^{-3/2}\|r^{1/2}(\partial_s+\partial_r)\partial 
u\|_{L_x^{\infty}}\cdot\Big(\sum_{|b|\leq|a|-1}\|\Omega\partial\partial^bu(s,x)\|^2_{L_x^2}+s\Big\|\frac{\Omega u}{r^{1/2}}\Big\|^2_{L^2}\Big)ds
\nonumber\\&\quad+CM^{1/2}(t)\int_4^t\|||s-r|^{-1/2}r(\partial_s+\partial_r)\partial u|\cdot|\partial_r\partial^au(s,x)|\|_{L_x^2(|x|\geq3s/4)}ds
\nonumber\\&\leq Ct^{1/2}M^{3/2}(t)+CM^{1/2}(t)\int_4^t\|r^{1/2}v\partial_r\partial^aw\|_{L^2(\mathbb
R^2)}ds,
\end{align}
where $w(s,x)=\zeta(|x|/s)u(s,x)$,
$v(s,x)=r^{1/2}(\partial_s+\partial_r)\partial w(s,x)|s-r|^{-1/2}$,
and $\zeta(x)$ is a smooth cut-off function with $\zeta(x)=0$ when
$|x|\leq2/3$ and $\zeta(x)=1$ when $|x|\geq3/4$.
Thus by Lemma \ref{lem3}, for any $4\leq t\leq T_{\varepsilon}$, the second term on the right-hand side of \eqref{Y57} is controlled by 
$CM(t)\int_4^t\|v\|_{H^1(\mathbb R^2)}ds$. Since
\begin{align*}
\|v\|_{H^1(\mathbb R^2)}&\leq Cs^{-1}\sum_{1\leq|a|\leq 2}\|\partial^a u\|_{L^2(\mathbb R^2\backslash\mathcal {K})}+C\|r^{-1/2}|s-r|^{-1/2}\partial^2
u\|_{L^2(|x|\geq2s/3)}\nonumber\\&\quad+C\sum_{|a|\leq2}\|r^{1/2}
|s-r|^{-1/2}(\partial_s+\partial_r)\partial^au\|_{L^2(|x|\geq2s/3)},
\end{align*}
then \eqref{Y57} can be controlled by
$CT_{\varepsilon}^{1/2}\varepsilon^3\ln^{5/2}T_{\varepsilon}$.
Similarly the last term in $V$ can also be controlled by
$CT_{\varepsilon}^{1/2}\varepsilon^3\ln^{5/2}T_{\varepsilon}$.
Hence it holds 
\begin{align*}
V\leq
CT_{\varepsilon}^{1/2}\varepsilon^3\ln^{5/2}T_{\varepsilon}+C\varepsilon^4\ln^2
T_{\varepsilon}.
\end{align*}
\par In conclusion, by combining the discussion above together, we get
\begin{align}\label{Y51}
M(t)\leq C_0\varepsilon^2\ln T_{\varepsilon}+C\varepsilon^4T_{\varepsilon}\ln^2T_{\varepsilon}
+C\varepsilon^4T_{\varepsilon}\ln^4T_{\varepsilon}+CT_{\varepsilon}^{1/2}\varepsilon^3\ln^{5/2}T_{\varepsilon}.
\end{align}
Thus there exists a sufficiently small positive constant $A$ such
that $M(t)\leq 2C_0\varepsilon^2\ln T_{\varepsilon}$, provided that
$\varepsilon^2T_{\varepsilon}\ln^3T_{\varepsilon}=A$ and
$\varepsilon$ is small enough. This completes the proof of Theorem \ref{thm1}.
\section*{Acknowledgement}

The first author would like to express his sincere thank to Professor Jason Metcalfe for his helpful discussion and suggestions. All the authors are grateful to Professor Yi Zhou for his helpful discussion.

Ning-An Lai is partially supported by NSFC (No. 12271487, 12171097, W2521007).
Wei Xu is partially supported by the Doctoral Initiation Fund of Nanchang Hangkong University (No. EA202207232).


\end{document}